\theoremstyle{definition}
\newtheorem{mydef}{Definition}
\newtheorem{empl}{Example}
\newtheorem{conj}{Conjecture}
\theoremstyle{plain}
\newtheorem{theo}{Theorem}[section]
\newtheorem{lem}[theo]{Lemma}
\newtheorem{prop}[theo]{Proposition}
\newtheorem{cor}{Corollary}
\newcommand\bigDiamond{\mathop{\mathpalette\bigDi@mond\relax}}
\newcommand\bigDi@mond[2]{%
  \vcenter{\hbox{\m@th
    \scalebox{\ifx#1\displaystyle 2\else1.2\fi}{$#1\Diamond$}%
  }}%
}
\newcommand\bigLozenge{\mathop{\mathpalette\bigL@zenge\relax}}
\newcommand\bigL@zenge[2]{%
  \vcenter{\hbox{\m@th
    \scalebox{\ifx#1\displaystyle 2\else1.2\fi}{$#1\blacklozenge$}%
  }}%
}
\title{On left-orderability of involutory quandles of links}
\author[1]{Hamid Abchir}
\author[2]{Mohammed Sabak}
\affil[1]{Hassan II University. Fundamental and applied mathematics laboratory. Route d'El Jadida Km 8. B.P. 5366. Maarif. 20100 Casablanca. Morocco.
	{e-mail: hamid.abchir@univh2c.ma}}
\affil[2]{Hassan II University. Ain Chock Faculty of sciences. Route d'El Jadida Km 8. B.P. 5366. Maarif. 20100 Casablanca. Morocco. {e-mail: mohammed.sabak-etu@etu.univh2c.ma}}
\begin{document}
\maketitle
%\noindent
\begin{abstract}
  \footnote{2020 Mathematics Subject Classification. Primary 57K10; Secondary 57K12.}
  We give a non-left-orderability criterion for involutory quandles of non-split links. We use this criterion to show that the involutory quandle of any non-trivial alternating link is not left-orderable, thus improving Theorem 8.1. proven by Raundal \textit{et al.} (Proceedings of the Edinburgh Mathematical Society (2021 \textbf{64}), page 646). We also use the criterion to show that the involutory quandles of augmented alternating links are not left-orderable. We introduce a new family of links containing all non-alternating and quasi-alternating $3$-braid closures and show that their involutory quandles are not left-orderable. This leads us to conjecture that the involutory quandle of any quasi-alternating link is not left-orderable.
\end{abstract}

\section{Introduction}
Over the past few years, the study of orders on groups which arise naturally in topology has gained significance as a valuable tool for detecting some geometric properties of three-dimensional manifolds (\cite{boyer2005orderable}, \cite{boyer2013spaces}). The study of orderability was extended to semi-groups (\cite{sikora2004topology}) and, more generally, to magmas (\cite{dabkowska2007compactness},\cite{ha2018algorithmic},\cite{ha2018orders}). The orderability of quandles was introduced in \cite{bardakov2022zero} in the context of the study of zero-divisors in quandle rings. Recently, Hitesh Raundal, Mahender Singh and Manpreet Singh (\cite{raundal}) investigated the orderability of quandles with a focus on link quandles. They noticed that orderability of link quandles and that of the corresponding link groups behave quite differently. This observation arises from the study of the orderability of the link quandles of many families of links such as fibered prime knots, some non-trivial positive or negative links, and some torus links. In the same article, the authors discussed left-orderability of the involutory quandle of an alternating link $L$ denoted by $IQ(L)$. In particular, they showed the following result.

\begin{theo}[\cite{raundal}, Theorem 8.1]
Let $L$ be a non-trivial alternating link. If there exists a reduced alternating diagram $D$ of $L$ such that generators of the involutory quandle $IQ(L)$ of $L$ corresponding to arcs in $D$ are pairwise distinct, then $IQ(L)$ is not left-orderable.
\label{theo_0}
\end{theo}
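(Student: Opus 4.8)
The plan is to argue by contradiction. Assume $IQ(L)$ is left-orderable, i.e.\ that there is a strict total order $<$ on the underlying set of $IQ(L)$ with $x<y \implies z*x<z*y$ for all $z$, and derive a contradiction from the combinatorics of the given reduced alternating diagram $D$. Denote by $a_1,\dots,a_n$ the generators of $IQ(L)$ associated with the arcs of $D$; by hypothesis these are pairwise distinct, so $\{a_1,\dots,a_n\}$ is a finite totally ordered set and has a $<$-minimal element $a_\ast$. Recall that at each crossing of $D$ the defining relation of $IQ(L)$ reads $q*p=q'$, where $p$ is the generator of the overarc and $q,q'$ are the generators of the two underarcs, and that the involutory axiom forces moreover $q'*p=q$. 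The first thing I would record is the elementary consequence of $<$ being a left order: every left translation $L_z\colon x\mapsto z*x$ is injective, and for distinct elements $x,y$ one has $x<y \iff L_z(x)<L_z(y)$ (this uses totality of $<$).

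Next I would prove the following local statement at every crossing of $D$: \emph{either the two underarcs yield the same generator, or the generator of the overarc lies strictly between those of the two underarcs in the order $<$.} If the two underarcs yield the same generator $q=q'$, then the relation there is $q*p=q$, whereas $q*q=q$ by idempotency; since $D$ is reduced the overarc is a different arc, so $p\neq q$ as generators, and then $L_q(p)=q=L_q(q)$ contradicts injectivity of $L_q$. So at this crossing we may assume $q\neq q'$, and then also $p\notin\{q,q'\}$ (if $p=q$ then $q'=p*p=p=q$ by idempotency). Thus $p,q,q'$ are three distinct generators, and I would run the key computation: the dichotomy for $L_q$ applied to $L_q(q)=q$ and $L_q(p)=q*p=q'$ gives $q<p \iff q<q'$, while the dichotomy for $L_{q'}$ applied to $L_{q'}(q')=q'$ and $L_{q'}(p)=q'*p=q$ gives $q'<p \iff q'<q$; combining these and using totality of $<$ on $\{p,q,q'\}$ forces $\min(q,q')<p<\max(q,q')$.

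The last ingredient is the characteristic feature of alternating diagrams: traversing any component of $L$ one meets overpasses and underpasses alternately, so between two consecutive underpasses there is exactly one overpass; hence every arc of $D$ passes over exactly one crossing, and the map sending an arc to the crossing it passes over is a bijection between arcs and crossings. In particular the minimal generator $a_\ast$ arises as the overarc at some crossing $c$. Apply the local statement at $c$: if the two underarcs at $c$ coincide we already have a contradiction, and otherwise $a_\ast$ lies strictly between the generators $q,q'$ of the two underarcs at $c$, so $a_\ast>\min(q,q')$; but $q,q'\in\{a_1,\dots,a_n\}$, whence $\min(q,q')\geq a_\ast$, a contradiction. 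This shows $IQ(L)$ is not left-orderable.

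I expect the delicate part to be the local computation of the second step. Left-orderability controls only \emph{left} translations, so the crossing relation must be exploited simultaneously in the forms $L_q(p)=q'$ and $L_{q'}(p)=q$ -- a single left translation does not suffice -- and one has to make sure every inequality produced is strict, which is precisely where the hypothesis that distinct arcs correspond to distinct generators, together with reducedness of $D$, is used. The alternating hypothesis enters only at the final step, but there it is essential: it is what guarantees that every generator, the minimal one included, actually occurs as an overarc, whereas in a non-alternating diagram an arc may pass over no crossing and the argument no longer applies.
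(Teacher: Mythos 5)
Your argument is correct in substance, and it takes a genuinely different route from anything in the paper: the paper does not reprove this statement (it is quoted from Raundal \emph{et al.}), and its own related proof is of the stronger Theorem \ref{theo_2}, which drops the distinctness hypothesis and is obtained from the tangle--strand decomposition, the monotonicity statement of Proposition \ref{prop1}, and the coarse-relation criterion of Theorem \ref{theo_1}. Your proof is purely local and elementary: strict order-preservation (hence injectivity) of left translations, applied to the two involutory crossing relations $q*p=q'$ and $q'*p=q$, forces the overarc generator strictly between the two underarc generators, and the minimal arc-generator, which in a reduced alternating diagram must occur as an overarc, then gives the contradiction; the computation $q<p\iff q<q'$ and $q'<p\iff q'<q$ is exactly right. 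What the simpler argument gives up is generality: it uses the distinctness hypothesis essentially (to exclude $p=q$ when the two underarcs carry the same generator, and to make the betweenness strict), so the degenerate case in which overarc and underarcs all collapse to one generator is invisible to it; handling that degeneracy globally is precisely what the coarse relations and the dual-graph connectivity argument in the paper are designed for, which is why your method proves Theorem \ref{theo_0} but not Theorem \ref{theo_2}.

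There is one genuine, though easily repaired, gap: the statement allows $L$ to be split, and then a reduced alternating diagram $D$ may contain a closed component with no crossings, so your claim that every arc of $D$ passes over exactly one crossing fails, and the minimal generator $a_\ast$ might be the generator of such a crossing-free circle, in which case the final step stalls. The fix is short: take the minimum only over generators of arcs lying on components that meet at least one crossing (nonempty since $L$ is non-trivial, and the underarcs at any crossing lie on such components, so the contradiction goes through verbatim); alternatively, dispose of split links first by restricting the order to the subquandle coming from a non-trivial non-split alternating sublink, as the paper does at the end of its proof of Theorem \ref{theo_2}. A minor related imprecision: the fact that the overarc at a crossing is a different arc from the underarcs uses reducedness \emph{together with} alternation (in a reduced non-alternating diagram an overarc can coincide with an underarc at a non-nugatory crossing), so your parenthetical ``since $D$ is reduced'' should be read as ``since $D$ is reduced and alternating.''
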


As a consequence of the above result, it was shown that the involutory quandles of prime and alternating knots of prime determinants are not left-orderable (\cite{raundal}, Corollary 8.3). The same was shown also for prime and alternating non-trivial Montesinos links (\cite{raundal}, Corollary 8.2) and some alternating Turk's head knots (\cite{raundal}, Corollary 8.4).    
  
The main result of the present paper (Theorem \ref{theo_1}) gives a non-left-orderability criterion for involutory quandles of non-split links. In order to obtain this criterion we introduce what we call the \textit{coarse presentation} of the involutory quandle of a link (Definition \ref{mydef1}), which is derived from the usual presentation of the involutory quandle.

To define the coarse presentation of the involutory quandle, we were inspired by the \textit{coarse Brunner's presentation} introduced by Ito in \cite{ito2013non}. For a non-split link $L$, the coarse Brunner's presentation of the fundamental group $\pi_1 \left( \Sigma_2(L) \right)$ of the double branched cover $\Sigma_2(L)$ of $L$ is a structure derived from the \textit{Brunner's presentation} of $\pi_1 \left( \Sigma_2(L) \right)$, introduced by Brunner in \cite{brunner1997double}. The coarse Brunner's presentation was used by Ito to give a non-left-orderability criterion of the group $\pi_1 \left( \Sigma_2(L)  \right)$ (\cite{ito2013non}, Theorem 3.11). Note also that the group $\pi_1 \left( \Sigma_2(L)  \right)$ is actually closely related to $IQ(L)$. In fact, in his Ph.D. thesis (\cite{winker1984quandles}, Theorem 5.2), Winker established that the group $\pi_1 \left( \Sigma_2(L)  \right)$ is isomorphic to a subgroup of a quotient of the enveloping group of $IQ(L)$.

The criterion given in Theorem \ref{theo_1} allowed us to show that Theorem \ref{theo_0} extends to any non-trivial alternating link (Theorem \ref{theo_2}). This criterion is also used to show that the involutory quandle of an augmented alternating link is not left-orderable (Theorem \ref{theo_3}). Finally, we introduce a new family of links containing all non-alternating and quasi-alternating $3$-braid closures and show that their involutory quandles are also not left-orderable (Theorem \ref{theo_4}). Hence, by extending arguments introduced in \cite{champanerkar2009twisting} and \cite{abchir2020generating}, we get infinite families of non-alternating quasi-alternating links that have non-left involutory quandles as explained in the last section. This enables us to conjecture that this will be the case for all quasi-alternating links.

The paper is organized as follows. The second section is devoted to some preliminaries. We recall the structure of quandles and the notion of orderability on them. We give some examples with a focus on link quandles and involutory quandles of links. In section three, we prove Proposition \ref{prop1} which enables us to introduce the coarse presentation of involutory quandles of links. We end this section by showing the main theorem of this paper which provides the non-left-orderability criterion for involutory quandles of links. The fourth section contains some results derived from the main theorem which state that the involutory quandles of alternating links, augmented alternating links, and quasi-alternating $3$-braid closures are all not left-orderable. We end by a section devoted to a discussion leading to the statement of a conjecture.

\section{Preliminaries}
\paragraph*{Quandles.}
A \textit{quandle} is a set $Q$ equipped with a binary operation $*$ that satisfies the following axioms:
\begin{itemize}
\item[A1.] \textit{Idempotency}: $x*x=x$ for all $x \in Q$.
\item[A2.] \textit{Right-invertibility}: For each $x,y \in Q$, there exists a unique element $z \in Q$ such that $x=z*y$.
\item[A3.] \textit{Self-distributivity}: $(x*y)*z = (x*z)*(y*z)$ for all $x,y,z \in Q$.
\end{itemize}

Note that the axiom A2 is equivalent to the existence of a dual operation $*^{-1}$ on $Q$ such that $$(x*y)*^{-1}y = (x*^{-1}y)*y=x \text{ for all } x,y \in Q.$$

The quandle $Q$ is said to be \textit{involutory} if $* = *^{-1}$. So axiom A2 becomes equivalent to $(x*y)*y=x$ for all $x,y \in Q$.  

Let $(Q_1,*_{1})$ and $(Q_2,*_{2})$ be two quandles. A \textit{quandle homomorphism} is a map $f:Q_1 \longrightarrow Q_2$ satisfying $f\left( x*_{1}y \right)=f\left( x \right)*_{2}f\left( y \right)$, for every two elements $x,y \in Q_1$. If the quandle homomorphism $f$ is bijective, then it is called a \textit{quandle isomorphism}, in which case the quandles $Q_1$ and $Q_2$ are said to be \textit{isomorphic}.  

\paragraph*{Examples.}
\begin{itemize}
\item[•] \textbf{\textit{The trivial quandle.}} A quandle $Q$ is \textit{trivial} if $y*x=x$ for every elements $x,y \in Q$. The trivial quandle on a set $X$ is defined as the quandle whose underlying set is $X$ and whose operation satisfies $y*x=x$ for all elements $x,y \in X$. 
\item[•] \textbf{\textit{The conjugation quandle of a group.}} If $G$ is a group and $n \in \mathbb{N}$, then the binary relation $x*y:=y^{-n} x y^n$ turns $G$ into a quandle ${\rm Conj}_n(G)$ called the $n$\textit{-conjugation quandle} of $G$. The quandle ${\rm Conj}_1(G)$ is simply called the \textit{conjugation quandle} of $G$ and is denoted by ${\rm Conj}(G)$. 
\item[•] \textbf{\textit{The conjugation quandle of the enveloping group.}} Let $Q$ be a quandle. The \textit{enveloping group} of $Q$, denoted ${\rm Env}(Q)$, is the group with generators $\left\lbrace \tilde{x} | x \in Q \right\rbrace$ and relations $\widetilde{x*y}=\tilde{y}^{-1} \tilde{x} \tilde{y}$ for all $x,y \in Q$. The conjugation quandle of ${\rm Env}(Q)$ provides a natural quandle homomorphism
  $$
  \begin{array}[c]{cccc}\label{eta}
  \eta: & Q & \longrightarrow & {\rm Conj}({\rm Env}(Q))\\ & x & \longmapsto & \tilde{x}
\end{array}
$$
which is not injective in general.
\item[•] \textbf{\textit{The core quandle of a group.}} If $G$ is a group, then the binary relation $x*y:=yx^{-1}y$ turns $G$ into a quandle called the \textit{core quandle} of $G$ and denoted by ${\rm Core}(G)$. For any group $G$, the quandle ${\rm Core}(G)$ is involutory. If $G$ is cyclic of order $n$, then ${\rm Core}(G)$ is called the $n$-\textit{dihedral quandle}, and it is denoted by $R_n$. 
\item[•] \textbf{\textit{Quandles given by a presentation.}} It is known that any quandle $Q$ can be given by a presentation $\langle S \big| R \rangle$ where $S$ is a generating set of $Q$ and $R$ is a set of relations (\cite{bardakov2020embeddings}).

  Note that the quandle operation is, in general, not associative. To better highlight this defect, we adopt the \textit{exponential notation} due to Fenn and Rourke (\cite{fenn1992racks}) defined by:
    $$x^y:=x*y,\,\,{\rm and}\,\,\,x^{y^{-1}}:=x*^{-1}y.$$
    More precisely this notation allows brackets to be dispensed with, because there are standard conventions for association with exponents. In particular
    $$x^{yz}=(x^y)^z=(x*y)*z,\,\, {\rm whereas}\,\,\, x^{y^z}=x^{(y^z)}=x*(y*z).$$
    However, to make calculations simpler, we will often need to mix the exponential notation with the standard symbol of the binary operation $*$.\\

    The elements of a quandle which are written in the form $$\left( \left( (x_1 *^{\pm 1} x_2 ) *^{\pm 1} x_3 \right) \hdots \right) *^{\pm 1} x_n = x_1^{x_2^{\pm 1} x_3^{\pm 1} \hdots x_n^{\pm 1}}$$
are called \textit{left-associative products}. Equivalently, Left-associative products of a quandle $Q$ are the elements of the form $x^u$ where $x$ is an element of $Q$ and $u$ is a word in the free group on $Q$. We denote $u^{-1}$ by $\bar{u}$. The following lemma shows that left-associative products are closed under the quandle operation. 

\begin{lem}[\citep{fenn1992racks}]
If $x^u$ and $y^v$ are elements of a quandle, then 
$$(x^u)^{(y^v)}=x^{u\bar{v}yv} \text{ and }  (x^u)^{\overline{(y^v)}}=x^{u\bar{v}\bar{y}v} .$$
\label{lem_0}
\end{lem}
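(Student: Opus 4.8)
The plan is to phrase everything in terms of the \emph{inner maps} $S_q\colon Q\to Q$, $S_q(x)=x*q$, and to recognise axiom A3 as a conjugation relation among them. First I would observe that axiom A2 says exactly that each $S_q$ is a bijection of $Q$, with inverse $x\mapsto x*^{-1}q$; consequently every word $u$ in the free group $F(Q)$ on the underlying set $Q$ induces a well-defined permutation $\sigma_u$ of $Q$, characterised by $\sigma_u(x)=x^u$, and this assignment satisfies $\sigma_{uv}=\sigma_v\circ\sigma_u$, $\sigma_{\bar u}=\sigma_u^{-1}$, and $\sigma_q=S_q$ for a single generator $q\in Q$. This is precisely what makes the exponential notation and the rule $(x^u)^v=x^{uv}$ legitimate.

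The key step is to rewrite A3. Reading $(x*y)*z=(x*z)*(y*z)$ as $S_z\bigl(S_y(x)\bigr)=S_{S_z(y)}\bigl(S_z(x)\bigr)$ and noting $S_z(y)=y^z$, one gets $S_z\circ S_y=S_{y^z}\circ S_z$, that is,
\[
 S_{y^z}=S_z\circ S_y\circ S_z^{-1}\qquad\text{for all }y,z\in Q.
\]
Iterating this over the letters of an arbitrary word $v$ — a short induction on the length of $v$, peeling off the last letter $a^{\pm1}$ and combining $\sigma_{v'a^{\pm1}}=S_a^{\pm1}\circ\sigma_{v'}$ with the displayed identity (applied to $S_a$ or to $S_a^{-1}$) — yields
\[
 S_{y^v}=\sigma_v\circ S_y\circ\sigma_v^{-1}\qquad\text{for all }y\in Q\text{ and every word }v.
\]

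Finally I would read off the lemma. Since $(x^u)^{(y^v)}=(x^u)*(y^v)=S_{y^v}\bigl(\sigma_u(x)\bigr)$, the last display gives
\[
 (x^u)^{(y^v)}=\bigl(\sigma_v\circ S_y\circ\sigma_v^{-1}\circ\sigma_u\bigr)(x)=\bigl(\sigma_v\circ\sigma_y\circ\sigma_{\bar v}\circ\sigma_u\bigr)(x)=\sigma_{u\bar v y v}(x)=x^{u\bar v y v},
\]
where I used $\sigma_v^{-1}=\sigma_{\bar v}$, $S_y=\sigma_y$, and the anti-homomorphism rule $\sigma_a\circ\sigma_b=\sigma_{ba}$ to recompose the single word $u\bar v y v$. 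The second identity follows verbatim by applying $S_{y^v}^{-1}=\sigma_v\circ S_y^{-1}\circ\sigma_v^{-1}$ (with $S_y^{-1}=\sigma_{\bar y}$) in place of $S_{y^v}$, producing $x^{u\bar v\bar y v}$. I expect no serious obstacle: the whole content is the conjugation reformulation of A3, and the only points demanding care are the bookkeeping — that $\sigma$ reverses composition, so the conjugating word comes out as $\bar v\,y\,v$ rather than $v\,y\,\bar v$ — and the (routine, A2-based) fact that these permutations are well defined on reduced words.
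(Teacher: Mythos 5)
Your argument is correct: the reformulation of A3 as the conjugation identity $S_{y^z}=S_z\circ S_y\circ S_z^{-1}$ among the inner bijections, the induction on the length of $v$ (covering both signed letters via A2), and the careful use of the anti-homomorphism $\sigma_{uv}=\sigma_v\circ\sigma_u$ deliver exactly $x^{u\bar v y v}$ and $x^{u\bar v\bar y v}$. The paper itself gives no proof of this lemma --- it is quoted from Fenn and Rourke --- and your operator-conjugation argument is essentially the standard proof found in that source, so there is nothing further to reconcile.
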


From the previous lemma we conclude that if $Q = \langle S \big| R \rangle$, then the elements of $Q$ can be represented as equivalent classes of left-associative products $x^w$ where $x$ is a generator in $S$ and $w$ is a word in the free group on $S$. If $Q$ is involutory, then by the self-invertibility axiom, any element of $S$ has order $2$ in the free group, so $\bar{x}=x$ for any generator $x \in S$. Furthermore, the inverse of a word $w=x_1x_2 \hdots x_n$ in the free group on $S$ is $\bar{w}=x_n \hdots x_2x_1$. 

If $Q$ is a quandle given by a presentation $\langle S \big| R \rangle$, then the group ${\rm Env}(Q)$ can be given a simple presentation. In fact, it was shown by Raundal \textit{et al.} (\citep{raundal}, Theorem 2.2) that ${\rm Env}(Q)$ can be presented as $\langle \tilde{x}, x \in S \big| \tilde{R} \rangle$, such that $\tilde{R}$ consists of relations obtained from the relations in $R$ by replacing an expression $x*y$ by $\tilde{y}^{-1} \tilde{x} \tilde{y}$ and replacing an expression $x*^{-1}y$ by $\tilde{y} \tilde{x} \tilde{y}^{-1}$. 

\item[•] \textbf{\textit{Free quandles.}} Let $S$ be a set. The \textit{free quandle} on $S$, denoted $FQ(S)$, is the quandle whose elements are the left-associative products $x^w$ where $x$ is a generator in $S$ and $w$ is a word in the free group on $S$, and the quandle operation on $FQ(S)$ is given by Lemma \ref{lem_0}.

\item[•] \textbf{\textit{The link quandle and the involutory quandle of a link.}} Let $D$ be an oriented diagram of an oriented link $L$. We denote by $A(D)$ and $C(D)$ respectively the set of arcs and the set of crossings of $D$. The \textit{fundamental quandle} or \textit{link quandle} $Q(L)$ of $L$, which is an invariant of oriented links introduced independently by Joyce (\cite{joyce1982classifying},\cite{joyce1979algebraic}) and Matveev (\cite{matveev1984distributive}), is the quandle given by the following presentation
 $$\langle x: x\in A(D)|r_c:c\in C(D)\rangle,$$
 where, for each crossing $c$, the relation $r_c$ is obtained as in Figure \ref{fig1}. In fact, we consider the set of equivalence classes of quandle words in $A(D)$ modulo the equivalence relation generated by axioms $A_1$, $A_2$ and $A_3$, i.e. the \textit{free quandle} on $A(D)$, itself quotiented by the equivalence relation generated by the crossing relations (\cite{elhamdadi2015quandles}).

Note that $Q(L)$ depends on the orientation of $L$. We can however derive a quandle which does not depend on the orientation of $L$ from $Q(L)$. We define the \textit{involutory quandle} of $L$, denoted $IQ(L)$, as the quandle obtained from $Q(L)$ by adding the relation $x^{y^2}=x$ for every two generators $x$ and $y$ of $Q(L)$. In this setting, the relations $z=y^{x}$ and $y=z^{x}$ become equivalent, which means that $IQ(L)$ does not depend on the orientation of $L$. Consequently, the involutory quandle is an invariant of unoriented links.

\begin{figure}[H]
\centering
\includegraphics[width=0.4\linewidth]{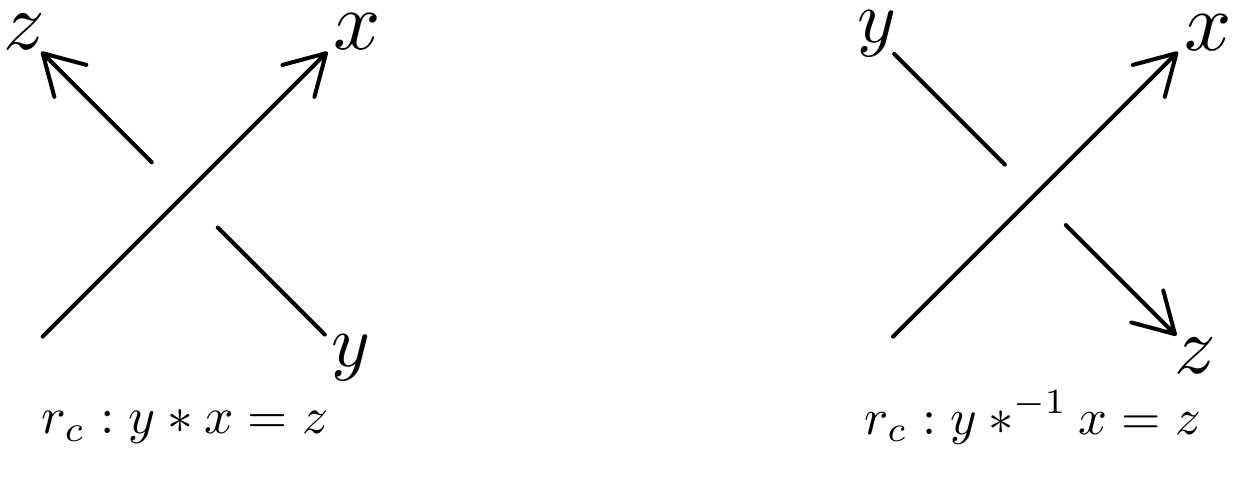}   
\caption{Relation of the fundamental quandle at a crossing $c$.}
\label{fig1}
\end{figure} 
\end{itemize}

\paragraph*{Notation.} Let $A$ be any subset of the set $A(D)$ of arcs of a link diagram $D$ representing a link $L$. We denote by $A_{IQ}$ the set of generators of $IQ(L)$ corresponding to the elements of $A$. Note that different elements of $A$ do not necessarily correspond to different generators of $IQ(L)$. If $x$ is an element of $A$, we will abuse notation and denote also by $x$ the corresponding generator in $A_{IQ}$. For two distinct elements $x$ and $y$ of $A$ corresponding to the same generator of $IQ(L)$, the equality $x=y$ is understood to be inside $A_{IQ}$.

\paragraph{Orderability of quandles.}
A \textit{left-order} on a quandle $Q$ is a strict linear order $<$ on $Q$ such that if $x < y$, then $z*x < z*y$ for every elements $x,y,z \in Q$. A quandle on which there is a left-order is said to be \textit{left-orderable}. Similarly, a \textit{right-order} on a quandle $Q$ is a strict linear order on $Q$ such that if $x<y$, then $x*z<y*z$ for all elements $x,y,z\in Q$. A quandle on which there is a right-order is said to be \textit{right-orderable}. A \textit{bi-order} on $Q$ is a strict linear order on $Q$ that is simultaneously a left and a right-order. A quandle on which there is a bi-order is said to be \textit{bi-orderable}. 

For example, a trivial quandle is not left-orderable but a trivial quandle with more than one element is right-orderable (\cite{dabkowska2007compactness}). Any non-trivial left or right-orderable quandle is infinite (\cite{raundal}, Proposition 2.5). So, finite quandles are neither right nor left-orderable. On the other hand, many left or right-orderable quandles can be generated from some groups as shown by the following results: If the group $G$ is bi-orderable, then the quandle ${\rm Conj}(G)$ is right-orderable (\cite{dabkowska2007compactness}, Proposition 7) whilst the quandle ${\rm Core}(G)$ is left-orderable (\cite{bardakov2022zero}, Proposition 3.4). On the other hand, if $G$ is any non-trivial group, then ${\rm Conj}(G)$ is not left-orderable and ${\rm Core}(G)$ is not left-orderable (\cite{bardakov2022zero}, Corollaries 3.8 and 3.9).

Let $Q$ be a quandle generated by a set $S$. The quandle homomorphism $\eta$ defined on Page \ref{eta} was used to provide the following sufficient condition for a quandle to be not left-orderable.

\begin{prop}[\cite{raundal}, Proposition 5.3]
Let $Q$ be a quandle generated by a set $S$ such that the map $\eta$ is injective. If there exist two distinct commuting elements in ${\rm Env}(Q)$ that are not inverses of each other and that are conjugates of elements from $\eta(S)^{\pm 1}$, then $Q$ is not left-orderable.  
\end{prop}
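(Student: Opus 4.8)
The plan is to argue by contradiction: assuming $Q$ carries a left-order $<$, I will produce two elements whose equality the hypotheses rule out. I would first isolate the elementary remark that \emph{in any left-orderable quandle $(Q,*)$, the identity $x*y=x$ forces $x=y$}. Indeed, if $x\neq y$ then $x<y$ or $y<x$; applying the left-order axiom with left entry $z=x$, and using idempotency $x*x=x$ together with the assumed equality $x*y=x$, either inequality collapses to $x<x$, a contradiction. This is the only step in which the order is invoked; the rest of the argument is pure group/quandle algebra.

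Next I would check that conjugates of the generators $\eta(s)$ remain inside the image $\eta(Q)$. By the presentation of $\mathrm{Env}(Q)$ on the generating set $\eta(S)=\{\tilde s : s\in S\}$ recalled above (Raundal \emph{et al.}, Theorem 2.2), every $g\in\mathrm{Env}(Q)$ is a word $v_1\cdots v_k$ with each $v_i\in\eta(S)^{\pm1}$. Unwinding $g^{-1}\eta(s)\,g=v_k^{-1}\cdots v_1^{-1}\,\eta(s)\,v_1\cdots v_k$ one conjugation at a time, conjugating by $v_i=\eta(s_i)$ is the quandle operation $u\mapsto u*\eta(s_i)$ while conjugating by $v_i=\eta(s_i)^{-1}$ is $u\mapsto u*^{-1}\eta(s_i)$, both in $\mathrm{Conj}(\mathrm{Env}(Q))$; since $\eta$ is a quandle homomorphism the outcome is $\eta(x)$ where $x\in Q$ is the corresponding left-associative product of $s$ with the $s_i$. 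Hence every conjugate of $\eta(s)$ — and, on taking inverses, of $\eta(s)^{-1}$ — lies in $\eta(Q)$.

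Now let $a,b\in\mathrm{Env}(Q)$ be the elements provided by the hypothesis: $ab=ba$, $a\neq b$, $a\neq b^{-1}$, with $a$ conjugate to $\eta(s)^{\epsilon}$ and $b$ conjugate to $\eta(t)^{\delta}$ for some $s,t\in S$ and $\epsilon,\delta\in\{1,-1\}$. Put $\alpha=a^{\epsilon}$ and $\beta=b^{\delta}$; these are conjugate to $\eta(s)$ and $\eta(t)$, so by the previous paragraph $\alpha=\eta(p)$ and $\beta=\eta(q)$ for some $p,q\in Q$. As $a$ and $b$ commute so do the powers $\alpha$ and $\beta$, which in $\mathrm{Conj}(\mathrm{Env}(Q))$ says $\alpha*\beta=\alpha$, i.e. $\eta(p*q)=\eta(p)$. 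Injectivity of $\eta$ yields $p*q=p$, and the remark of the first paragraph then forces $p=q$, hence $\alpha=\beta$, i.e. $a^{\epsilon}=b^{\delta}$. Raising to the power $\epsilon\in\{1,-1\}$: if $\epsilon=\delta$ this gives $a=b$, contradicting $a\neq b$; if $\epsilon=-\delta$ it gives $a=b^{-1}$, contradicting $a\neq b^{-1}$. Either way we have a contradiction, so $Q$ is not left-orderable.

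The point requiring care is the second paragraph: that conjugating a generator $\eta(s)$ by an \emph{arbitrary} element of $\mathrm{Env}(Q)$ lands back in $\eta(Q)$, for this is exactly what allows the injectivity of $\eta$ to turn a group equation into an equation in $Q$, and it is where the hypothesis that $S$ generates $Q$ is used. The sign bookkeeping at the end is routine, but it makes transparent why \emph{both} of the hypotheses ``$a\neq b$'' and ``$a\neq b^{-1}$'' are needed.
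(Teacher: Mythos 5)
The paper does not prove this proposition; it is imported verbatim from Raundal \emph{et al.} (Proposition 5.3), so there is no in-paper argument to compare against. Your proof is correct and follows the same route as the source: closure of $\eta(Q)$ under conjugation by words in $\eta(S)^{\pm 1}$ (this is where generation by $S$ enters), injectivity of $\eta$ turning commutativity into $p*q=p$ in $Q$, the observation that $x*y=x$ with $x\neq y$ is incompatible with a left order via $x*x=x$, and the final sign bookkeeping showing that the conclusion $a^{\epsilon}=b^{\delta}$ contradicts $a\neq b$ and $a\neq b^{-1}$.
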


A notable fact about involutory quandles is that they are not right-orderable (\cite{bardakov2022zero}, Proposition 3.7). It is known that free involutory quandles are left-orderable (\cite{dhanwani2021dehn}, Proposition 4.4). So, the involutory quandle of a trivial link with more than one component is left-orderable. Significant examples of non-orderable quandles are the involutory quandles of some alternating links as mentionned in Theorem \ref{theo_0} cited in the introduction. As a consequence of that result, it was shown that the involutory quandles of prime and alternating knots with prime determinants are not left-orderable (\cite{raundal}, Corollary 8.3). The same was shown also for prime and alternating non-trivial Montesinos links (\cite{raundal}, Corollary 8.2) and for some alternating Turk's head knots (\cite{raundal}, Corollary 8.4).

\paragraph{Rational tangles.} We call a \textit{tangle} $T$ any pair $(B,A)$, where $B$ is a $3$-ball in the $3$-sphere $S^3$ and $A$ is a properly embedded $1$-manifold in $B$ which meets the boundary $\partial B$ of $B$ in four distinct points, together with an identification of the pair $(\partial B, \partial B \cap A)$ with $\left( S^2,\left\lbrace NE,NW,SW,SE \right\rbrace \right)$ where $S^2$ is the $2$-sphere with symbols NE, NW, SW and SE referring to the compass directions as in Figure \ref{Compass}. Two tangles $(B,A)$ and $(B,A^{'})$ are \textit{equivalent} if there is an isotopy of $B$ that is the identity on $\partial B$ and that takes $A$ to $A^{'}$.
\begin{figure}[H]
\centering
\includegraphics[width=0.25\linewidth]{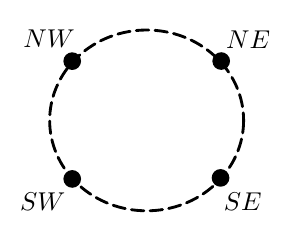}   
\caption{Labelling of boundary points.}
\label{Compass}
\end{figure} 
Let $T=(B,A)$ be a tangle. We assume that the four endpoints lie in the great circle of the boundary sphere which joins the two poles. That great circle bounds a disk $\Delta$ in $B$. We consider a regular projection of $B$ on $\Delta$. The image of $A$ by that projection in which the height information is added at each of the double points is called a \textit{tangle diagram} of $T$. Two tangle diagrams are \textit{equivalent} if they are related by a finite sequence of planar isotopies and Reidemeister moves in the interior of the \textit{projection disk} $\Delta$. Note that two tangles are equivalent if and only if they have equivalent diagrams. Depending on the context we will denote by $T$ the tangle or its projection. We will denote by $A(T)$ and $C(T)$ respectively the set of arcs and the set of crossings of a tangle diagram $T$.

A tangle diagram $T$ is said to be \textit{disconnected} if either there exists a simple closed curve embedded in the projection disk which does not meet $T$ but encircles a part of it, or there exists a simple arc properly embedded in the projection disk which does not meet $T$ and splits the projection disk into two disks each one containing a part of $T$. A tangle diagram is \textit{connected} if it is not disconnected. A tangle diagram is \textit{reduced} if its number of crossings can not be reduced by any tangle equivalence. In the remaining of the paper, all tangle diagrams are assumed to be connected and reduced.

A tangle diagram $T$ provides two link diagrams: the \textit{numerator} of $T$, denoted by $n(T)$, which is obtained by joining with simple arcs the two upper endpoints $(NW,NE)$ and the two lower endpoints $(SW,SE)$ of $T$, and the \textit{denominator} of $T$, denoted by $d(T)$, which is obtained by joining with simple arcs each pair of the corresponding top and bottom endpoints $(NW,SW)$ and $(NE,SE)$ of $T$ (see Figure \ref{fig2}). We denote by $N(T)$ and $D(T)$ respectively the links represented by the diagrams $n(T)$ and $d(T)$. 

\begin{figure}[H]
\centering
\includegraphics[width=0.3\linewidth]{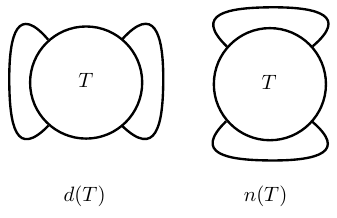}   
\caption{The denominator and the numerator of a tangle diagram $T$.}
\label{fig2}
\end{figure}

A tangle diagram is \textit{alternating} if the ''over'' or ''under'' nature of the crossings alternates as one moves along any arc of the tangle diagram. A tangle is said to be \textit{alternating} if it admits an alternating diagram. 

Let $T$ be an alternating tangle diagram. Consider the arc of $T$ which have $NW$ as an endpoint. Suppose that when we move along that arc starting at $NW$ we pass below at the first encountered crossing. Then the arc of $T$ which ends at the point $SE$ will also pass below at the last encountered crossing before reaching $SE$ and the arc of $T$ which starts at $NE$ will pass over at the first encountered crossing. It is easy to see that the arcs of $T$ coming from diametrically opposite endpoints both pass over or below at the first encountered crossing. That remark enables us to distinguish two types of alternating tangle diagrams which we call \textit{type 1} tangles and \textit{type 2} tangles as shown in the Fig. \ref{fig3}. 

\begin{figure}[H]
\centering
\includegraphics[width=0.5\linewidth]{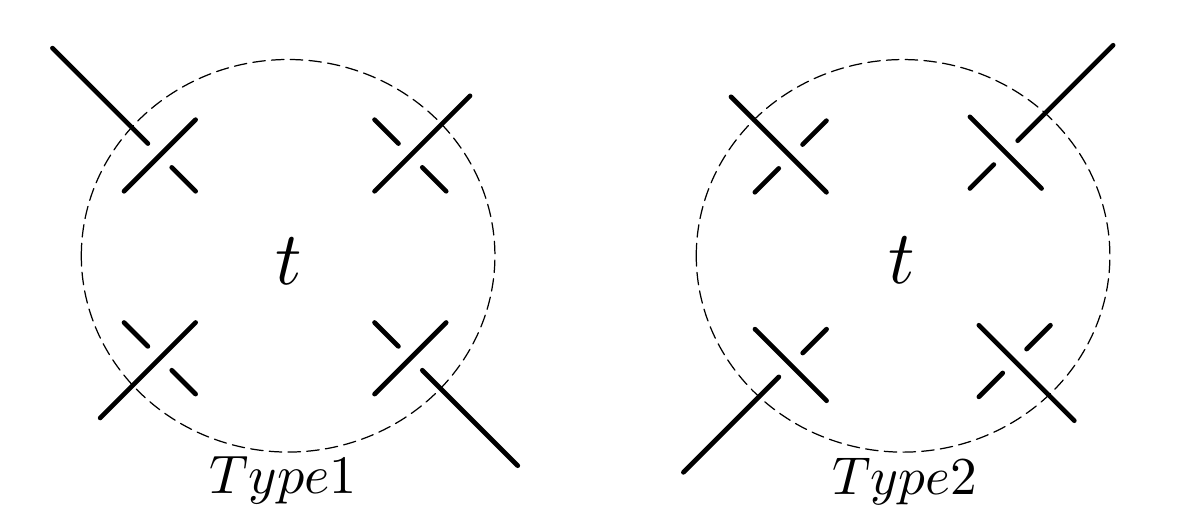}   
\caption{Type 1 and Type 2 alternating tangle diagrams.}
\label{fig3}
\end{figure}

We adopt the notations used for tangles by Goldman and Kauffman in \cite{goldman} and Kauffman and Lambroupoulou in \cite{kauffman2}. The \textit{mirror image} of a tangle $T$, denoted $-T$, is obtained from $T$ by inverting the height information at each crossing. If we rotate the tangle $-T$ by a $90^{ \circ }$ angle in the counter clockwise (respt. clockwise) direction, we get a tangle denoted $\frac{1}{T_{cc}}$ (respt. $\frac{1}{T_c}$). In Figure \ref{fig4}, we recall some operations defined on tangles.

\begin{figure}[H]
\centering
\includegraphics[width=0.5\linewidth]{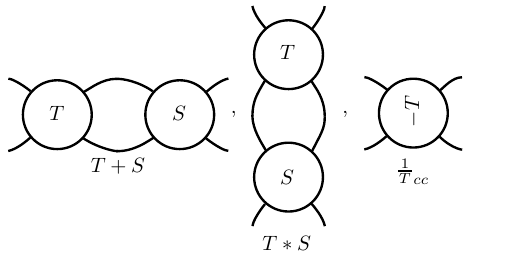}   
\caption{Some operations on tangle diagrams.}
\label{fig4}
\end{figure}

A $180^{\circ}$ rotation of a tangle diagram $T$ in the horizontal (respt. vertical) axis is called \textit{horizontal Flip} (respt. \textit{vertical Flip}) and will be denoted by $T_h$ (respt. $T_v$). That is the tangle diagram obtained by rotating the ball containing $T$ in space around the horizontal (respt. vertical) axis as shown in the left of Figure \ref{fig5} and then project the new tangle by the same projection function as that used to get $T$. Note that if $T$ is an alternating tangle diagram, then $T_h$ and $T_v$ are also alternating. A \textit{flype} is an isotopy of tangles that is depicted in the right of Figure \ref{fig5}.

\begin{figure}[H]
\centering
\includegraphics[width=0.7\linewidth]{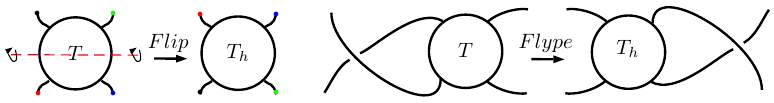}
\caption{the Flip move (left), and the flype move (right).}
\label{fig5}
\end{figure}

A \textit{rational tangle} is a tangle $(B,A)$ such that the pair $(B,A)$ is homeomorphic to $(D^2 \times [0,1] , \left\lbrace x,y \right\rbrace \times [0,1]) $, where $D^2$ is the unit disk and $x$ and $y$ are points in the interior of the disk $D^2$. The rational tangle diagrams $0$, $\pm1$, $\infty$ are shown in Figure \ref{fig6}.

\begin{figure}[H]
\centering
\includegraphics[width=0.15\linewidth]{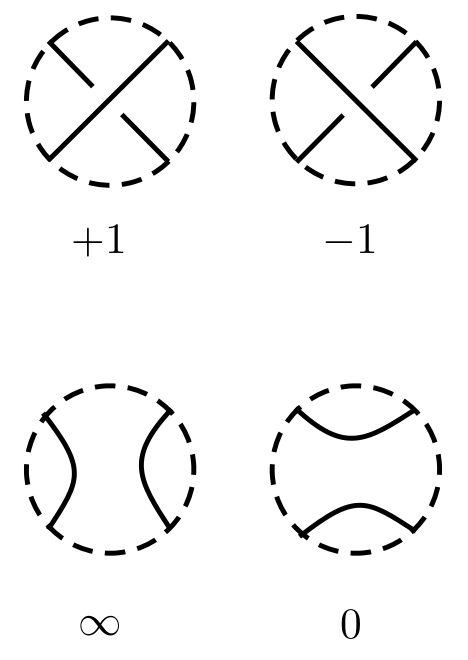}  
\caption{The rational tangle diagrams $0$, $\pm1$, and $\infty$ }
\label{fig6}
\end{figure}

The sum of $n$ copies of the tangle diagram $1$ or of $n$ copies of the tangle diagram $-1$ are also respectively denoted $n$ and $-n$.
If $t$ is a rational tangle diagram then $\frac{1}{t_c}$ and $\frac{1}{t_{cc}}$ are equivalent and both represent the \textit{inversion} of $t$ denoted by $\frac{1}{t}$. The tangle diagrams $n$, $-n$, $\dfrac{1}{n}$, and $-\dfrac{1}{n}$ are called \textit{elementary tangle diagrams}.

Let $t$ be a rational tangle diagram and $p,q \in \mathbb{Z}$, we have the following equivalences that can easily be proven using flype moves:
$$  p+t+q = t+p+q \text{  ,  } \frac{1}{p} * t * \frac{1}{q} = t * \frac{1}{p+q}.$$
$$ t * \frac{1}{p} = \frac{1}{p+\frac{1}{t}} \text{  ,  } \frac{1}{p} * t = \frac{1}{\frac{1}{t}+p}. $$

Using the above notations and equivalences, one can naturally associate a tangle diagram to any continued fraction
$\left[ a_1,\hdots,a_{n} \right]:=\displaystyle a_1 + \frac{1}{ a_2  + \frac{1}{  \ddots +\frac{1}{ a_{n-1}  + \frac{1}{ a_n } }}},\,\ a_i\in\mathbb{Z},$ as shown in Figure \ref{fig7}. It is clear that if all the integers $a_i$ are of the same sign, then the associated tangle diagram is alternating.

\begin{figure}[H]
\centering
\includegraphics[width=0.75\linewidth]{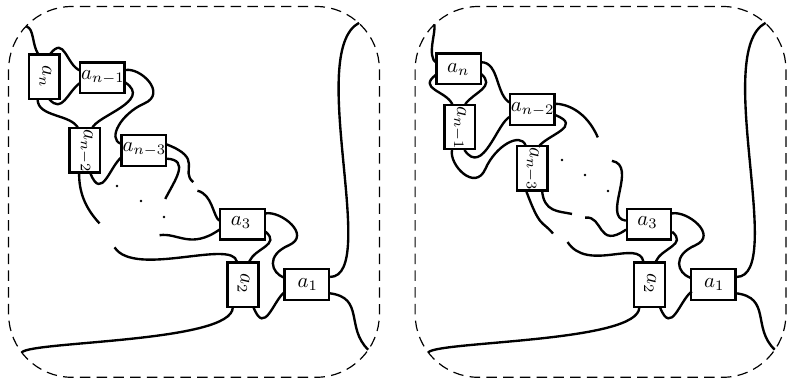}  
\caption{The rational tangle diagram associated to the continued fraction $\left[ a_1,\hdots,a_{n} \right]$ according to whether $n$ is even (left) or odd (right).}
\label{fig7}
\end{figure}

In \cite{conway}, Conway showed that for any rational tangle $t$, there exists a unique continued fraction $\left[ a_1  , \hdots ,  a_n \right]$ where all the integers $a_i$ are of the same sign, called the \textit{fraction} of $t$, such that the associated alternating tangle diagram depicted in Figure \ref{fig7} represents $t$. Furthermore, Conway showed that two rational tangles are equivalent if and only if they have the same fraction. Thus, any rational tangle $t$ can be represented by a unique rational number $\left[ a_1  , \hdots ,  a_n \right] = \frac{p}{q}$ where all the $a_i$ are of the same sign. If $a_1$ is non-zero, then the number $n$ of elementary subtangles of $t$ is called the \textit{length} of $t$. Since, $\left[ a_1 , \hdots , a_n, 1 \right]$ and $\left[ a_1, \hdots , a_n + 1 \right]$ are equal as continued fractions, then one can assume without loss of generality that the length of a rational tangle is always odd. We make this assumption throughout the rest of the paper.

The \textit{standard diagram} of a rational tangle $t$ will be the connected reduced alternating tangle diagram naturally associated to the fraction of $t$ described above. In what follows a rational tangle diagram will mean the standard one.

\section{A non-left-orderability criterion for involutory quandles of links}
In this section, we show the main theorem of this paper which gives a non-left-orderability criterion for involutory quandles of links. We start by adapting the notion of tangle-strand decomposition introduced by Ito to our context. Then, we state and show a fundamental remark (Proposition \ref{prop1}) which allows us to define the coarse presentation of the involutory quandle of a non-split link.

\subsection{Tangle-strand decomposition.}

\begin{wrapfigure}[11]{r}{0.3\textwidth}	
\vspace{-40pt}
\begin{center}
\raisebox{0pt}[\height+\baselineskip]{\includegraphics[width=0.5\linewidth]{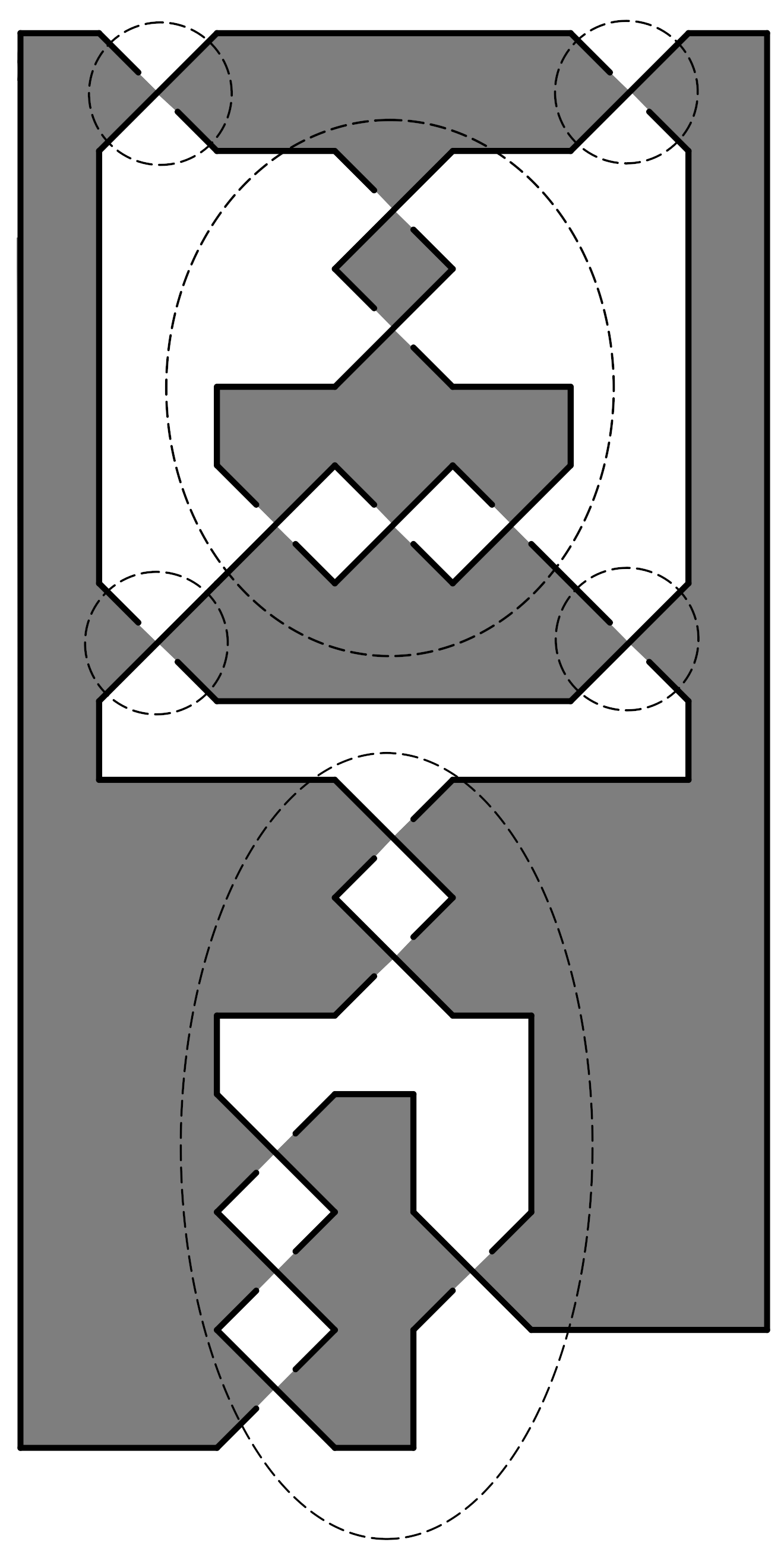}}
\end{center}
\end{wrapfigure}

Let $D$ be a diagram of a non-split link $L$. We consider a checkerboard coloring of $D$ with the convention that the unbounded region is not colored. The colored regions define a compact surface, possibly non-orientable, whose boundary is the link $L$. We call the obtained surface a \textit{checkerboard surface}. The link diagram $D$ can be decomposed into embedded rational tangles attached together with a set of strands as was described by Ito in \cite{ito2013non}. Such a decomposition of $D$ induces a decomposition of its checkerboard surface into a set of disks called \textit{disk-parts} of $D$ and subsurfaces corresponding to tangles called \textit{tangle parts} of $D$. The obtained decomposition is called a \textit{tangle-strand decomposition} of $D$. The figure at the right depicts an example of a tangle-strand decomposition of a link diagram where every tangle part is specified by a dashed circle around it.

Note that our definition of the tangle-strand decomposition is slightly different from the one given in \cite{ito2013non} where the author considered the tangle parts to be subsurfaces of the checkerboard surface corresponding to \textit{algebraic tangles} which are obtained from rational tangles by finite sequences of the tangle operations $+$ and $*$ illustrated in Figure \ref{fig4}. 

Let $A$ be a tangle part of $D$ and let $\Delta$ be the plane projection disk of the corresponding tangle. We consider the two arcs which constitute the intersection of $\partial\Delta$ with $A$. To distinguish the isotopy class of the tangle we will use, we agree that the endpoints of these two arcs will be labelled in the following way: one of them connects $NW$ to $SW$ while the other connects $NE$ to $SE$ (see the left of Figure \ref{fig8}). Since rational tangles are invariant by Flip moves, then there exists a unique rational tangle $t$ corresponding to this labelling. We abuse the notation and denote the tangle part $A$ also by $t$. Once the isotopy class of a tangle $t$ is chosen, we will fix a labelling for two special arcs lying in the corresponding tangle part. If $t$ is positive, in which case it is of type 1 as an alternating tangle, then we label by $X_t$ and $Y_t$ the two arcs passing through the end points $NW$ and $SE$ respectively. If $t$ is negative, in which case it is of type 2, then we label by $X_t$ and $Y_t$ the two arcs which pass through the end points $NE$ and $SW$ respectively. The situation is depicted in the left of Figure \ref{fig8}.

\begin{figure}[H]
\centering
\includegraphics[width=0.7\linewidth]{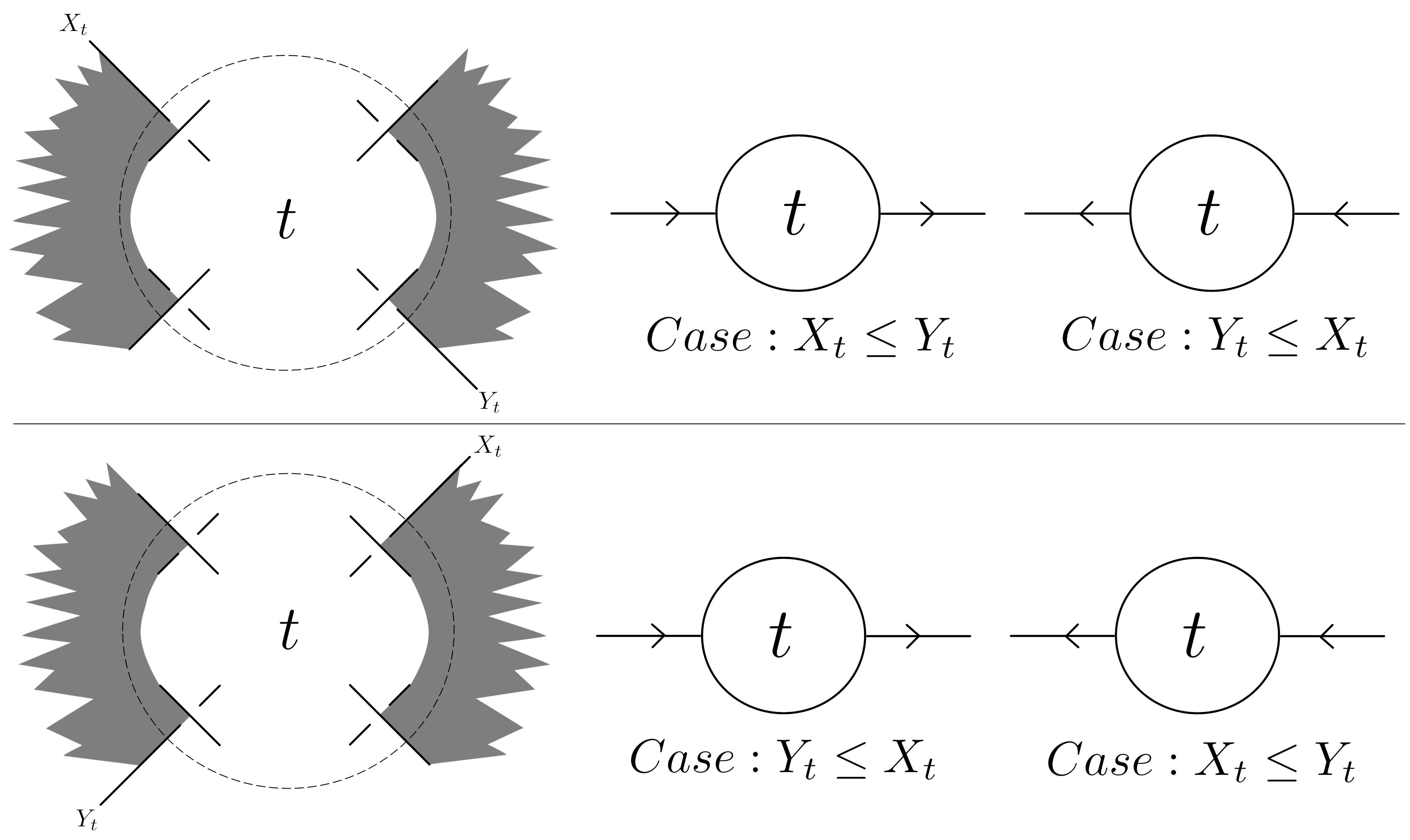}  
\caption{The arcs $X_t$ and $Y_t$ according to the type of the rational tangle (left) and the rules of the orientation of the graph $\Gamma_{D,<}$ (right).}
\label{fig8}
\end{figure}

Let $L$ be a non-split link with a connected diagram $D$. To the tangle-strand decomposition of the diagram $D$, we associate a planar graph $\Gamma_D$ called the \textit{coarse decomposition graph} of $D$ in the following way: The vertex of $\Gamma_D$ is a disk part of the tangle-strand decomposition. To each tangle part we assign an edge connecting the vertices that correspond to the disks connected by the considered tangle part. By a \textit{region} of the diagram $D$ we mean a region of the graph $\Gamma_D$. Each region $R$ of $D$ is surrounded by a set of tangle parts which we denote by $\partial R$. In what follows, we will use the set $A_{IQ}(R):= \displaystyle \bigcup_{t \in \partial R} A_{IQ}(t)$.

The \textit{dual} of $\Gamma_D$ is a planar graph denoted by $\left( \Gamma_D \right)^*$ which is obtained from $\Gamma_D$ by placing a vertex in each region of $\Gamma_D$ (including the unbounded one) and by putting an edge for every pair of regions in $\Gamma_D$ that are adjacent to each other through an edge of $\Gamma_D$, and a self-loop in the case where the same region is present on both sides of an edge. 

Note that our definition of the graph $\Gamma_D$ is different from the coarse decomposition graph defined in \cite{ito2013non} where the author assigned an orientation to $\Gamma_D$. We will also define an oriented version of $\Gamma_D$ in the case where $IQ(L)$ is left-orderable.

Assume that $<$ is a left-order on $IQ(L)$. We choose a way to orient the edges of the coarse decomposition graph $\Gamma_D$ which depicts how the generators involved in each tangle part are ordered with respect to $<$ (see the right of Figure \ref{fig8}). This provides an oriented planar graph denoted $\Gamma_{D,<}$. If $t$ is a tangle part of $D$ such that $X_t = Y_t$, then the corresponding edge of $\Gamma_{D,<}$ will be oriented arbitrarily. So, the order of generators in $t$ will not depend on the orientation of such edges.

\subsection{Coarse presentation of the involutory quandle of a non-split link}
Let $Q$ be a left-orderable involutory quandle. Let $LO(Q)$ denote the set of left-orders on $Q$. For each $< \in LO(Q)$ and $x \in Q$, we define 

$$\left( \infty,x \right]_<:=\left\lbrace z \in Q | z \leq x\right\rbrace , \left[ x,\infty \right)_<:=\left\lbrace z \in Q | x\leq z \right\rbrace.$$
For any $x$ and $y$ of $Q$, we define the following subset of $LO(Q)$:
$$ LO_{(x,y)}(Q) := \left\lbrace < \in LO(Q) | x \leq y \right\rbrace.$$
For any left-order $< \in LO_{(x,y)}(Q)$, define the following subset of $Q$:
$$ \left[ x,y \right]_< := \left\lbrace z \in Q | x \leq z \leq y \right\rbrace .$$
For any $x$ and $y$ in $Q$, we define the following subsets of $Q$: 

$$ \left[ x,y \right] := \bigcap_{< \in LO_{(x,y)}(Q)} \left[ x,y \right]_< .$$

$$ \left\lbrace | x,y | \right\rbrace := \left[ x,y \right] \cap \left[ y,x \right].$$

Note that $\left\lbrace | x,y | \right\rbrace = \left\lbrace | y,x | \right\rbrace$, and $\left\lbrace | x,x | \right\rbrace = \left\lbrace x \right\rbrace$ for all $x,y \in Q$. 

Let $L$ be a non-split link with a diagram $D$ and let $t$ be a tangle part of $D$ of length $n=2k+1$. We will enumerate the elementary subtangles $t_1, \hdots, t_n$ of $t$ according to the type of $t$ as in Figure \ref{fig9}. For each elementary subtangle $t_i$, we will label the elements of the set $A(t_i)$ as $\left\lbrace x_{(i,0)}, \hdots, x_{(i,c_i+1)} \right\rbrace$, where $c_i$ is the number of crossings in $t_i$, as in Figure \ref{fig10}. 

\begin{figure}[H]
\centering
\includegraphics[width=0.95\linewidth]{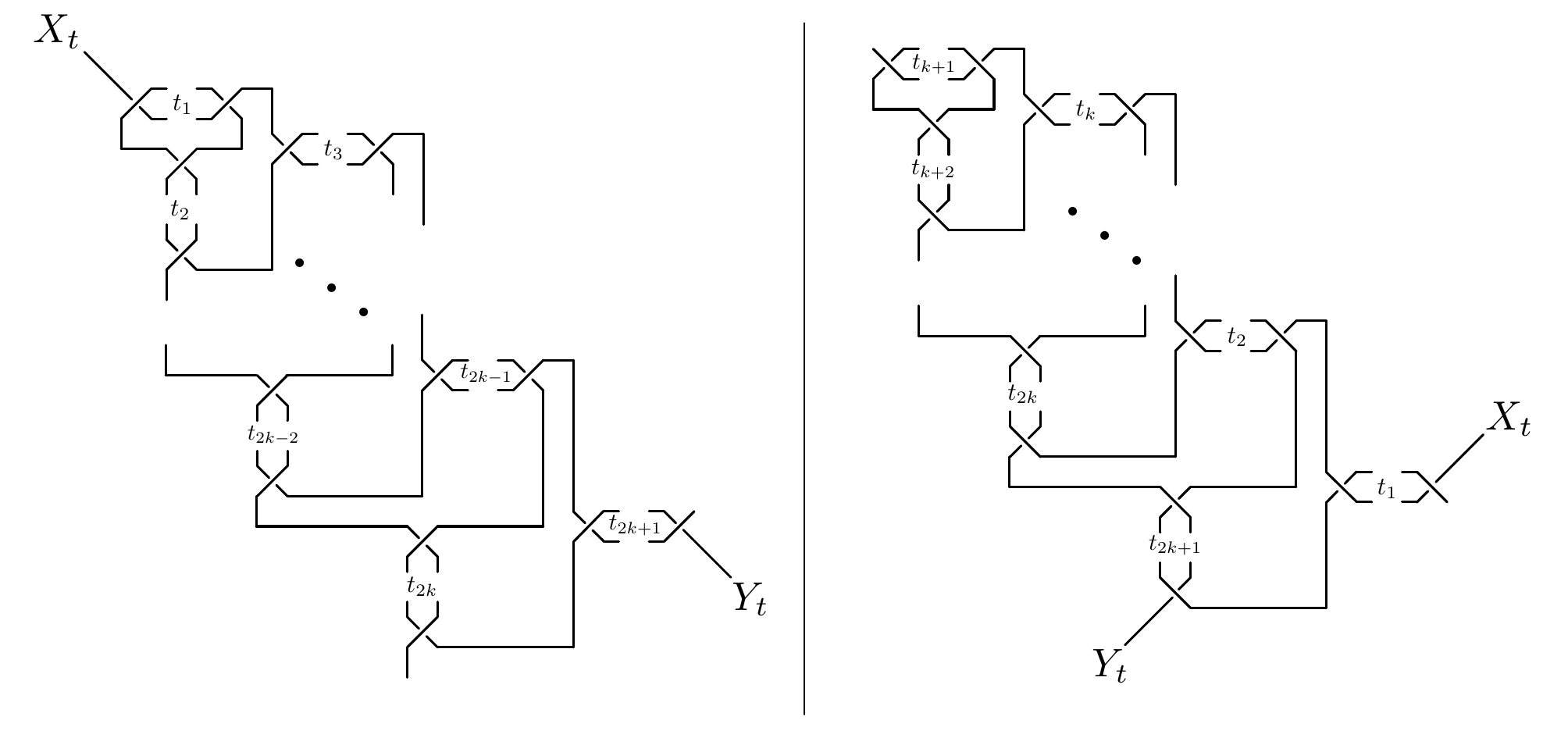}  
\caption{Enumeration of the elementary subtangles of a rational tangle $t$ according to its type.}
\label{fig9}
\end{figure}

\begin{figure}[H]
\centering
\includegraphics[width=0.7\linewidth]{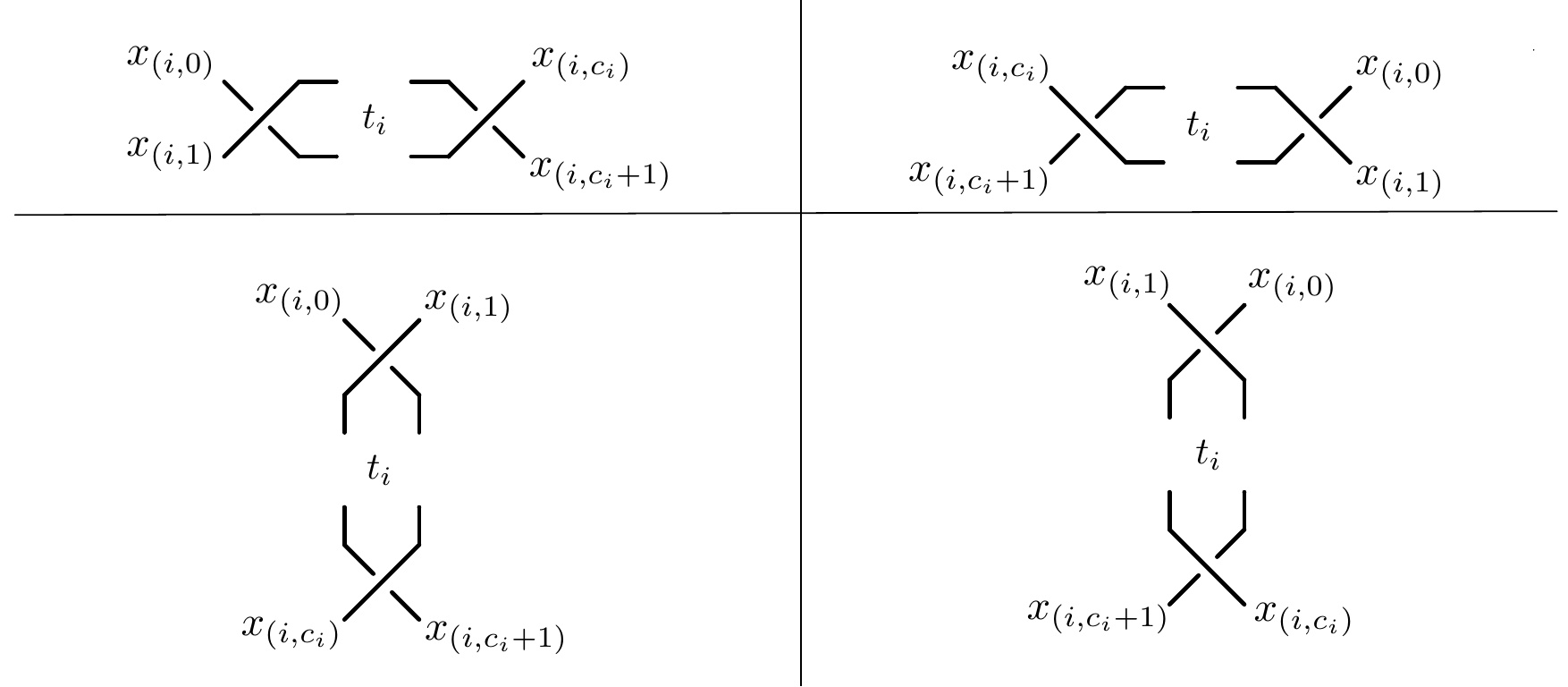}  
\caption{Labelling of the arcs of an elementary subtangle $t_i$ according to its type and fraction.}
\label{fig10}
\end{figure}

Assume that $IQ(L)$ is left-orderable. The following result describes the order induced by any left-order on the elements of $A_{IQ}(t)$.

\begin{prop} Let $D$ be a diagram of a non-split link and $t$ be a tangle part of $D$. If $<$ is any left-order on $IQ(L)$, then there exists a unique element $\diamond$ in $\left\lbrace =,<,> \right\rbrace$ such that 
\begin{align*}
X_t & \diamond x_{(1,1)} \diamond x_{(1,2)} \diamond \hdots \diamond x_{(1,c_1)}\\
& \diamond x_{(2,1)} \diamond x_{(2,2)} \diamond \hdots \diamond x_{(2,c_2)}\\
& \vdots \\
& \diamond x_{(n,1)} \diamond x_{(n,2)} \diamond \hdots \diamond x_{(n,c_n)} \diamond Y_t.
\end{align*}

\end{prop}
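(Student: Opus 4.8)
The plan is to split the statement into a purely order-theoretic lemma about left-ordered involutory quandles and a diagrammatic bookkeeping step.

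\emph{The order lemma.} First I would prove: if $<$ is a left-order on an involutory quandle $Q$ and $a,b\in Q$, then $a<b$ implies $b<a*b$, $a=b$ implies $a*b=b$, and $a>b$ implies $b>a*b$. The case $a=b$ is just idempotency. For $a<b$, suppose toward a contradiction that $a*b\le b$. If $a*b=b$, then the involutory axiom $(a*b)*b=a$ together with $b*b=b$ forces $a=b$, a contradiction; so $a*b<b$. Applying left-compatibility with $z=a$ to the inequality $a<b$ and using $a*a=a$ gives $a<a*b$; applying left-compatibility with $z=a*b$ to $a*b<b$, and using $(a*b)*(a*b)=a*b$ on the left and $(a*b)*b=a$ on the right, gives $a*b<a$ — contradicting $a<a*b$. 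The case $a>b$ follows by applying this to the reversed order, which is again a left-order. Hence, for any $z_{k-1},z_k,z_{k+1}\in Q$ with $z_{k+1}=z_{k-1}*z_k$, the relation between $z_k$ and $z_{k+1}$ in $\{=,<,>\}$ is exactly the relation between $z_{k-1}$ and $z_k$. Consequently, if $(z_0,z_1,\ldots,z_m)$ is any sequence with $z_{k+1}=z_{k-1}*z_k$ for all interior $k$, then by induction one and the same $\diamond\in\{=,<,>\}$ holds between every pair of consecutive terms, and it is unique because the common relation between consecutive terms is determined by the order.

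\emph{Reduction to the recurrence.} It therefore suffices to check that, writing the arcs listed in the statement as $z_0=X_t,\ z_1=x_{(1,1)},\ \ldots,\ z_{c_1}=x_{(1,c_1)},\ z_{c_1+1}=x_{(2,1)},\ \ldots,\ z_m=Y_t$, one has $z_{k+1}=z_{k-1}*z_k$ in $IQ(L)$ for every interior index $k$ (equivalently $z_{k-1}=z_{k+1}*z_k$, since $IQ(L)$ is involutory). Within a single elementary subtangle $t_i$ this is a direct computation on its standard (alternating) twist diagram: following the two physical strands through the $c_i$ crossings in a zig-zag, the labelling of Figure~\ref{fig10} is arranged so that at the $j$-th crossing of $t_i$ the over-arc is $z_k=x_{(i,j)}$, the incoming under-arc is $z_{k-1}$, and the outgoing under-arc is $z_{k+1}$; the crossing relation of $IQ(L)$ there then reads precisely $z_{k+1}=z_{k-1}*z_k$. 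Because $IQ(L)$ is involutory, $*=*^{-1}$, so the sign of the crossing is irrelevant — only the combinatorics of which arc is over and which are under, together with the standard form fixing the indexing, matters. At the junction between $t_i$ and $t_{i+1}$ the last arc produced in $t_i$ is the boundary arc $x_{(i+1,0)}$ of $t_{i+1}$, and the first crossing of $t_{i+1}$ uses exactly the two arcs with which the enumeration left $t_i$; the enumeration of subtangles in Figure~\ref{fig9} and the choice of $X_t,Y_t$ dictated by the type of $t$ are set up so that at this crossing, too, the over-arc is the later of the two terms and the relation is again $z_{k+1}=z_{k-1}*z_k$. Finally one checks the two end cases, namely that the arcs through the prescribed boundary points really are the first and last terms, so that the sequence genuinely begins at $X_t$ and ends at $Y_t$; the order lemma of the first paragraph then yields the claimed $\diamond$.

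\emph{Main obstacle.} The delicate point is this last bit of bookkeeping: one must verify, case by case for type 1 and type 2 tangles and for a junction between consecutive elementary subtangles of opposite kind (a horizontal twist meeting a vertical twist), that the conventions of Figures~\ref{fig9} and~\ref{fig10} never reverse the roles of the two non-over arcs at a crossing — for if they did, one would obtain $z_{k+1}=z_k*z_{k-1}$ at some step, and monotonicity would generally fail. This is a finite check over the possible local pictures, and once it is carried out the order lemma immediately gives existence and uniqueness of $\diamond$.
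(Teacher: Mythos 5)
Your order lemma is correct, and it is essentially the paper's Lemma \ref{lem1} (the equivalence $(ii)\Leftrightarrow(iv)$) together with its iterated consequence (Lemmas \ref{lem2} and \ref{lem3}): any sequence satisfying $z_{k+1}=z_{k-1}*z_k$ is monotonic, with the same symbol $\diamond$ between consecutive terms. The gap is in your reduction step. The concatenated sequence $X_t,x_{(1,1)},\dots,x_{(1,c_1)},x_{(2,1)},\dots,Y_t$ does \emph{not} satisfy $z_{k+1}=z_{k-1}*z_k$ at all interior indices. In the standard (staircase) diagram of a rational tangle, consecutive elementary subtangles share exactly one strand; the second strand entering $t_{i+1}$ comes from $t_{i-1}$ (or from the tangle boundary), not from $t_i$. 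Concretely, the identifications actually used in the paper (Figures \ref{figadd1} and \ref{figadd2}) are $x_{(i+1,1)}=x_{(i,c_i+1)}$ but $x_{(i+1,0)}=x_{(i-1,c_{i-1})}$ (and $x_{(2,0)}=x_{(1,1)}$), so the first post-junction step is fine ($x_{(i+1,1)}=x_{(i,c_i-1)}*x_{(i,c_i)}$), but the next crossing relation reads $x_{(i+1,2)}=x_{(i+1,0)}*x_{(i+1,1)}$ with $x_{(i+1,0)}$ an arc from two blocks back, not $x_{(i,c_i)}*x_{(i+1,1)}$ as your recurrence requires; by right-invertibility these coincide only if $x_{(i-1,c_{i-1})}=x_{(i,c_i)}$. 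Hence your assertion that ``the first crossing of $t_{i+1}$ uses exactly the two arcs with which the enumeration left $t_i$'' fails for every rational tangle of length at least $3$, and the ``finite check'' you defer at the end would not go through.

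Consequently, monotonicity of the displayed chain does not follow from the order lemma alone. What does follow is that each block $\left(x_{(i,\bullet)}\right)$ is monotonic; one must then show that all blocks carry the \emph{same} monotony, and this is where the real work lies. In the positive case the paper transfers monotony between blocks by transitivity using the skip identity $x_{(i,c_i)}=x_{(i+2,0)}$: from $x_{(i,c_i)}\diamond x_{(i,c_i+1)}=x_{(i+1,1)}\diamond x_{(i+1,c_{i+1}+1)}=x_{(i+2,1)}$ one gets $x_{(i+2,0)}\diamond x_{(i+2,1)}$, hence the monotony of block $i+2$ agrees with the previous ones. For negative (type $2$) tangles the enumeration of Figure \ref{fig9} lists the blocks in the displayed chain in an order that does not match their geometric adjacency, so a left-to-right induction is not available; the paper runs a separate induction outward from the middle block via Lemma \ref{lem4} (with $k_i=2(k+1)-i$). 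Your proposal addresses neither the transfer-of-monotony step nor the type $2$ enumeration, so as written the proof has a genuine gap even though its order-theoretic half is sound.
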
\label{prop1}
Note that $x>y$ means that $y<x$.\\

To prove this proposition, we will need to show the following lemmas.

\begin{lem}
Let $<$ be any left-order on an involutory quandle $Q$. Suppose that $\diamond$ is one of the symbols in the set $\left\lbrace =,<,> \right\rbrace$. For any elements $x$ and $y$ of $Q$, the following assertions are equivalent.
\begin{enumerate}
\item[(i)] there exists an element $z$ in $Q$ such that $z^x \diamond z^y$.
\item[(ii)] $x \diamond y$.
\item[(iii)] $z^x \diamond z^y$ for every $z \in Q$. 
\item[(iv)] $y^x \diamond x$.
\end{enumerate}
\label{lem1}
\end{lem}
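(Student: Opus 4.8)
The plan is to establish the cycle (i)$\Rightarrow$(ii)$\Rightarrow$(iii)$\Rightarrow$(i) first, which amounts to the single structural fact that, for a fixed $z$, the map $w\mapsto z^w$ (left multiplication by $z$) is an order-embedding compatible with the symbol $\diamond$; I would then deduce (iv) from this fact by two short substitutions exploiting idempotency (A1) and the involutory axiom $(x^y)^y=x$.

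First I would prove (ii)$\Rightarrow$(iii), which is essentially the definition of a left-order: if $x<y$ then $z^x<z^y$ for every $z$ by hypothesis, the case $x>y$ follows by reading the inequality backwards, and $x=y$ gives $z^x=z^y$ trivially; hence $x\diamond y$ forces $z^x\diamond z^y$ for all $z$. The implication (iii)$\Rightarrow$(i) is immediate since $Q$ is non-empty. For (i)$\Rightarrow$(ii) I would argue by trichotomy: assume $z^x\diamond z^y$ for some $z$ but that $x\diamond y$ fails; then one of the two remaining relations $\diamond'\in\{=,<,>\}\setminus\{\diamond\}$ holds between $x$ and $y$, and applying the already-proven (ii)$\Rightarrow$(iii) to $\diamond'$ would force $z^x\diamond' z^y$, contradicting trichotomy in the linear order since $\diamond\neq\diamond'$. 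This closes the cycle and yields the key fact that for every fixed $z$ and all $a,b\in Q$ one has $a\diamond b \iff z^a\diamond z^b$; I will refer to this as $(\star)$.

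It remains to connect (iv) to (ii), and here I would apply $(\star)$ twice. Taking $z=y^x$, $a=y^x$, $b=x$ in $(\star)$, and simplifying via idempotency $(y^x)^{(y^x)}=y^x$ and the involutory identity $(y^x)^x=y$, gives $y^x\diamond x \iff y^x\diamond y$. Taking next $z=y$, $a=x$, $b=y$ in $(\star)$, and using $y^y=y$, gives $x\diamond y \iff y^x\diamond y$. Chaining these two equivalences yields $y^x\diamond x \iff y^x\diamond y \iff x\diamond y$, which is exactly (iv)$\iff$(ii) with the \emph{same} symbol $\diamond$. A quick check in $\mathrm{Core}(\mathbb{R})$, where $y^x=2x-y$, confirms the orientation, since there $y^x<x \iff x<y$.

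The only genuinely delicate point is bookkeeping the symbol $\diamond$: every step must preserve it. This is why the trichotomy argument in (i)$\Rightarrow$(ii) is phrased so as to exclude both wrong alternatives simultaneously, and why the two substitutions establishing (iv) are chosen to keep $y^x$ (rather than $x$) as the common base in the intermediate relation $y^x\diamond y$, so that idempotency and the involutory axiom collapse the auxiliary terms without flipping the relation. The rest is routine verification of the quandle identities used.
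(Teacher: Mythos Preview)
Your proof is correct and follows essentially the same route as the paper: both establish the cycle (i)$\Leftrightarrow$(ii)$\Leftrightarrow$(iii) from the definition of a left-order plus trichotomy, and both reduce (iv) to (ii) via the identities $(y^x)^{y^x}=y^x$ and $(y^x)^x=y$ after left-multiplying by $y^x$. The only cosmetic difference is that you package the cycle as a single biconditional $(\star)$ and apply it twice to get (iv)$\Leftrightarrow$(ii) in one stroke, whereas the paper proves (iv)$\Rightarrow$(i) and (ii)$\Rightarrow$(iv) as two separate implications using the same computations; the underlying algebra is identical.
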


\begin{proof}
At first, note that if $\diamond$ is the symbol $=$, then the assertions are trivially equivalent. So we only consider $\diamond$ to be a fixed element of $\left\lbrace <,> \right\rbrace$.
\item[$(i) \Rightarrow (ii)$] Assume there exists $z$ in $Q$ such that $z^x \diamond z^y$. Suppose on the contrary that $y \diamond x$. Then $z^y \diamond z^x$, which contradicts the assumption. Thus $x \diamond y$.
\item[$(ii) \Rightarrow (iii)$] This exactly expresses the property that $\diamond$ is a left-order on $Q$. 
\item[$(iii) \Rightarrow (i)$] Trivial.
\item[$(iv) \Rightarrow (i)$] Assume that $y^x \diamond x$. We multiply both sides of this inequality on the left by the element $y^x$. We get the following
\begin{align*}
y^x*y^x & \diamond y^x*x \\
\Leftrightarrow y^x & \diamond y \\
\Leftrightarrow y^x & \diamond y^y.
\end{align*}
This makes assertion $(i)$ true.
\item[$(ii) \Rightarrow (iv)$] We will proceed by contraposition. Assume that $x \diamond y^x$. We multiply both sides of this inequality on the left by the element $y^x$. We get 
\begin{align*}
y^x*x & \diamond y^x*y^x \\
\Leftrightarrow y & \diamond y^x \\
\Leftrightarrow y^y & \diamond y^x.
\end{align*}
The last equality and the equivalence $(i) \Leftrightarrow (ii)$ show that $y \diamond x$.
\end{proof}

\begin{lem}Let $<$ be any left-order on an involutory quandle $Q$. Suppose that $\diamond$ is one of the symbols in the set $\left\lbrace =,<,> \right\rbrace$. For any elements $x$ and $y$ of $Q$, the following assertions are equivalent.
\begin{enumerate}
\item[(i)] $x \diamond y$.
\item[(ii)] $y^{(xy)^i} \diamond x^{(xy)^{i+1}}$ for each integer $i \geq 0$.
\end{enumerate}
\label{lem2}
\end{lem}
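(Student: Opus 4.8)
The plan is to reduce the statement to Lemma~\ref{lem1} via two elementary ``one--step'' equivalences and then to telescope. Throughout, write $P_i:=x^{(xy)^i}$ and $Q_i:=y^{(xy)^i}$ for $i\ge 0$, so $P_0=x$, $Q_0=y$, and observe that the $i$--th instance of assertion (ii) is exactly $Q_i\diamond P_{i+1}$. The case in which $\diamond$ is the symbol $=$ requires no separate treatment, since Lemma~\ref{lem1} and Lemma~\ref{lem_0} are insensitive to which of $\{=,<,>\}$ the symbol $\diamond$ denotes.

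First I would isolate two consequences of Lemma~\ref{lem1}, valid for arbitrary elements $p,q$ of an involutory quandle carrying a left--order: (A) $p\diamond q\iff q\diamond p^{q}$, and (B) $p\diamond q\iff p^{q}\diamond q^{pq}$. For (A), apply the equivalence (ii)$\Leftrightarrow$(iv) of Lemma~\ref{lem1} to the pair $(q,p^{q})$: it gives $q\diamond p^{q}\iff (p^{q})^{q}\diamond q$, and $(p^{q})^{q}=p$ by the involutory axiom A2. For (B), apply the same equivalence to the pair $(p^{q},q^{pq})$ to obtain $p^{q}\diamond q^{pq}\iff (q^{pq})^{(p^{q})}\diamond p^{q}$; then Lemma~\ref{lem_0}, together with the fact that every $z\in Q$ acts as an involution (so $\bar z=z$ and $z^{2}=1$ as exponents) and idempotency, collapses $(q^{pq})^{(p^{q})}=q^{(pq)\bar q p q}=q^{ppq}=q^{q}=q$, so (B) reduces to (A).

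Next I would record, again from Lemma~\ref{lem_0} --- writing $w=(xy)^i$, so $P_i=x^{w}$ and $Q_i=y^{w}$, and using $w\bar w=1$ together with the idempotencies $x^{x}=x$, $y^{y}=y$ --- the two shift identities $P_i^{Q_i}=x^{yw}=x^{(xy)^{i+1}}=P_{i+1}$ and $Q_i^{P_iQ_i}=y^{xyw}=y^{(xy)^{i+1}}=Q_{i+1}$. Substituting $(p,q)=(P_i,Q_i)$ into (B) and using these identities gives $P_i\diamond Q_i\iff P_{i+1}\diamond Q_{i+1}$; since $P_0\diamond Q_0$ is the assertion $x\diamond y$, induction yields $P_i\diamond Q_i\iff x\diamond y$ for every $i\ge 0$. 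Finally, substituting $(p,q)=(P_i,Q_i)$ into (A) and using $P_i^{Q_i}=P_{i+1}$ gives $Q_i\diamond P_{i+1}\iff P_i\diamond Q_i$, so the $i$--th instance of (ii) is equivalent to $x\diamond y$ for every $i\ge 0$. In particular (i) implies (ii), and the $i=0$ instance of (ii) implies (i).

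The only real work is the quandle-word bookkeeping in (B) and in the shift identities, and the point to watch is that the exponents are words in the free group on the \emph{set} $Q$: about a letter $z\in Q$ one uses only that the right translation $a\mapsto a*z$ is an involution (so $\bar z$ and $z$ have the same effect inside an exponent and $z^{2}$ disappears) together with $z*z=z$ --- exactly what Lemma~\ref{lem_0} and axioms A1--A2 provide. The conceptual obstacle worth flagging is that the naive induction ``apply $xy$ to both sides of $P_i\diamond Q_i$'' is illegitimate, since right multiplication need not preserve a left order; this is circumvented precisely because the pair $(P_i,Q_i)$ is not arbitrary --- the word $P_iQ_i$ acts on $Q$ exactly as $xy$ does --- which is what forces the shift identities and lets (B) carry the order relation forward one step at a time.
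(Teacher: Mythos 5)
Your proof is correct and essentially the same as the paper's: your one-step equivalences (A) and (B) are exactly what the paper extracts from Lemma \ref{lem1}, (ii)$\Leftrightarrow$(iv), applied to the recursions $X_{i+1}=X_i^{Y_i}$ and $Y_{i+1}=Y_i^{X_{i+1}}$ (its equivalences (\ref{eq3})--(\ref{eq5})), and your induction on $P_i\diamond Q_i$ followed by one application of (A) merely repackages the paper's induction on the relation $Y_{i-1}\diamond X_i$. The only difference is cosmetic: you carry out the exponent bookkeeping explicitly via Lemma \ref{lem_0}, where the paper records those identities as ``simple computations''.
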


\begin{proof}
At first, note that if $\diamond$ is the symbol $=$, then the assertions are trivially equivalent. So we only consider $\diamond$ to be a fixed element of $\left\lbrace <,> \right\rbrace$. For $i \ge 0$, let us put $X_i := x^{(xy)^i}$ and $Y_i := y^{(xy)^i}$. By simple computations, we get the following equalities.
\begin{equation}
X_{i+1}=X_i^{Y_i}, \text{ for every integer } i \geq 0.
\label{eq1}
\end{equation}
\begin{equation}
Y_j=Y_{j-1}^{X_j},  \text{ for every integer } j \geq 1.
\label{eq2}
\end{equation}
By the equivalence $(ii) \Leftrightarrow (iv)$ in Lemma \ref{lem1} and (\ref{eq1}), we deduce the following equivalence:
\begin{equation}
Y_i \diamond X_{i+1} \Leftrightarrow X_i \diamond Y_i.
\label{eq3}
\end{equation}
By the equivalence $(ii) \Leftrightarrow (iv)$ in Lemma \ref{lem1} and (\ref{eq2}), we deduce the following equivalence:
\begin{equation}
X_i \diamond Y_i \Leftrightarrow Y_{i-1} \diamond X_i.
\label{eq4}
\end{equation}
By (\ref{eq3}) and (\ref{eq4}), we get the following equivalence:
\begin{equation}
Y_i \diamond X_{i+1} \Leftrightarrow Y_{i-1} \diamond X_i \text{ for every integer } i \geq 1.
\label{eq5}
\end{equation}
The result follows inductively from (\ref{eq5}) and Lemma \ref{lem1}.

\end{proof}

Let $<$ be any left-order on an involutory quandle $Q$. A sequence $\left( x_i \right)_{i\ge 0}$ of elements of $Q$ is said to be \textit{monotonic with respect to} $<$ if for each $i\ge 0$, $x_i\diamond x_{i+1}$, where $\diamond$ is a fixed symbol in the set $\{<,>,=\}$. A sequence of elements of $Q$ is said to be \textit{monotonic} if it is monotonic with respect to any left-order on $Q$. We will say that two monotonic sequences $\left( x_i \right)_{i\ge 0}$ and $\left( y_i \right)_{i\ge 0}$ have the \textit{same monotony} if for any left-order $<$ on $Q$, $x_i\diamond x_{i+1}$ implies $y_i\diamond y_{i+1}$, where $\diamond$ is a fixed symbol in the set $\{<,>,=\}$.

\begin{lem}
Let $Q$ be a left-orderable involutory quandle. Let $\left( x_i \right)_{i \geq 0}$ be a sequence of elements of $Q$ defined recursively by $x_{i+2}=x_{i}*x_{i+1}$. Then $\left( x_i \right)$ is monotonic. 
\label{lem3}
\end{lem}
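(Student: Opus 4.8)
The plan is to fix an arbitrary left-order $<$ on $Q$ and to produce a single symbol $\diamond\in\{<,>,=\}$ for which $x_i\diamond x_{i+1}$ holds for every $i\ge 0$; since $<$ is arbitrary, this is exactly monotonicity of $(x_i)$. The only natural candidate for $\diamond$ is the relation $<$ assigns to the initial pair: by trichotomy of the linear order $<$, exactly one of $x_0<x_1$, $x_0=x_1$, $x_0>x_1$ holds, and I take $\diamond$ to be that symbol.

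Then I would prove by induction on $i$ that $x_i\diamond x_{i+1}$. The base case $i=0$ is the definition of $\diamond$. For the inductive step, rewrite the recursion in exponential notation as $x_{i+2}=x_i*x_{i+1}=x_i^{\,x_{i+1}}$, so what must be shown is $x_{i+1}\diamond x_i^{\,x_{i+1}}$. This is precisely the shape governed by the equivalence $(ii)\Leftrightarrow(iv)$ of Lemma \ref{lem1}, which reads $a\diamond b\Leftrightarrow b^{a}\diamond a$ for all $a,b\in Q$. Applying it with $a:=x_{i+1}$, $b:=x_i$, and the opposite symbol $\bar{\diamond}$ (where $\overline{<}={>}$, $\overline{>}={<}$, and $\overline{=}={=}$) — the passage from $x_i\diamond x_{i+1}$ to the equivalent statement $x_{i+1}\,\bar{\diamond}\,x_i$ being the place where the symbol flips — gives $x_{i+1}\,\bar{\diamond}\,x_i\Leftrightarrow x_i^{\,x_{i+1}}\,\bar{\diamond}\,x_{i+1}$, hence $x_{i+2}\,\bar{\diamond}\,x_{i+1}$, i.e. $x_{i+1}\diamond x_{i+2}$. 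This closes the induction. (In the case $\diamond$ is $=$ one may alternatively note at once that $x_{i+1}=x_i$ forces $x_{i+2}=x_{i+1}*x_{i+1}=x_{i+1}$ by idempotency.)

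I do not expect a serious obstacle: the argument is short once one observes that clause (iv) of Lemma \ref{lem1} is tailor-made for the recursion $x_{i+2}=x_i^{\,x_{i+1}}$, and in fact the same computation shows $x_i\diamond x_{i+1}\Leftrightarrow x_{i+1}\diamond x_{i+2}$ for every $i$, so the monotony propagates in both directions. The one point that genuinely needs care is the bookkeeping of the symbol: one must track that rewriting $x_i\diamond x_{i+1}$ as $x_{i+1}\,\bar{\diamond}\,x_i$ introduces a flip, and then verify that Lemma \ref{lem1} restores it, so that it is the same symbol $\diamond$ — not its opposite — that is inherited by the next consecutive pair. Checking the three cases $<$, $>$, $=$ explicitly, or handling them uniformly through $\bar{\diamond}$ (which also absorbs the degenerate case $=$), removes any ambiguity.
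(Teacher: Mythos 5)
Your proof is correct, and the symbol bookkeeping — the one place where it could go wrong — is handled properly: the inductive hypothesis $x_i\diamond x_{i+1}$ rewritten as $x_{i+1}\,\bar{\diamond}\,x_i$, fed into the equivalence $(ii)\Leftrightarrow(iv)$ of Lemma \ref{lem1} with the symbol $\bar{\diamond}$, indeed returns $x_{i+2}\,\bar{\diamond}\,x_{i+1}$, i.e.\ $x_{i+1}\diamond x_{i+2}$, so the same symbol propagates. The route differs slightly from the paper's: there, one first establishes the closed form $x_{2j}=x_0^{(x_0x_1)^{j}}$, $x_{2j+1}=x_1^{(x_0x_1)^{j}}$ by induction and then invokes Lemma \ref{lem2}, whose own proof is exactly the chain of equivalences you run (equations (\ref{eq3})--(\ref{eq5}), all derived from Lemma \ref{lem1} $(ii)\Leftrightarrow(iv)$). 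So the underlying mechanism is identical, but your version is a direct induction on the recursion $x_{i+2}=x_i^{\,x_{i+1}}$ that bypasses both the explicit formula and Lemma \ref{lem2}; it is shorter and self-contained for this statement, and as you note it even yields the stronger two-way statement $x_i\diamond x_{i+1}\Leftrightarrow x_{i+1}\diamond x_{i+2}$, whereas the paper's factorization has the mild advantage of recording the closed-form expressions $x^{(xy)^i}$, $y^{(xy)^i}$ for the terms of the sequence.
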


\begin{proof}
If we put $x_0=x$ and $x_1=y$, then a simple induction shows that 
$x_i = \begin{cases} 
x^{(xy)^{\frac{i}{2}}} &\text{ if } i \text{ is even}, \\
y^{(xy)^{\frac{i-1}{2}}} &\text{ if } i \text{ is odd}.
\end{cases}$

The result then follows by Lemma \ref{lem2}.
\end{proof}

Let $L$ be a non-split link with a diagram $D$ and let $t$ be a tangle part of $D$. Each elementary subtangle $t_i$ of $t$ provides a sequence of generators $A_{IQ}(t_i)=\left\lbrace x_{(i,0)},  x_{(i,1)}, \hdots, x_{(i,c_i + 1)}  \right\rbrace$ such that $x_{(i,j-1)}*x_{(i,j)}=x_{(i,j+1)}$ for every $1 \leq j \leq c_i$, where $c_i$ is the number of crossings in the elementary tangle $t_i$ (see Figure \ref{fig10}). By Lemma \ref{lem3}, the sequence $\left( x_{(i,\bullet)} \right)$ is monotonic for all $1 \leq i \leq n$. 

\begin{lem}
Assume that $t$ is negative with odd length $2k+1$. Let $k_i = 2(k+1)-i$, $1 \leq i \leq k$. Assume that $IQ(L)$ is left orderable. If there exists $ 1 \leq i \leq k$ such that the sequences $\left( x_{(i+1,\bullet)} \right)$ and $\left( x_{(k_i,\bullet)} \right)$ have the same monotony, then the sequences $\left( x_{(i,\bullet)} \right)$ and $\left( x_{(k_{i-1},\bullet)} \right)$ also have the same monotony as $\left( x_{(k_i,\bullet)} \right)$. 
\label{lem4}
\end{lem}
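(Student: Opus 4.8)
The plan is to analyze the local picture of a negative rational tangle $t$ of odd length $2k+1$ and to understand how the elementary subtangles are glued to one another. First I would recall from Figure \ref{fig9} and Figure \ref{fig10} how the subtangles $t_1,\dots,t_n$ sit together: for a negative tangle the subtangle $t_{i+1}$ shares arcs with both $t_i$ and $t_{k_i}$ (where $k_i = 2(k+1)-i$), because of the way horizontal and vertical elementary tangles alternate and because the ``fraction'' structure of a rational tangle folds the strand back on itself. Concretely, the boundary arcs of $t_{i+1}$ that lie at the end of its crossing-sequence — that is, generators of the form $x_{(i+1,0)}$ and $x_{(i+1,c_{i+1}+1)}$ — are identified (inside $A_{IQ}$, as per the Notation paragraph) with particular arcs of $t_i$ and of $t_{k_i}$, namely with endpoints of the sequences $(x_{(i,\bullet)})$ and $(x_{(k_i,\bullet)})$ and with endpoints of $(x_{(k_{i-1},\bullet)})$. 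The first and main step is therefore to write down these arc identifications explicitly from the standard diagram, separating the even-index (horizontal twist) subtangles from the odd-index (vertical twist) ones.

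Once the gluing is recorded, the argument is essentially a transfer of monotony along shared generators. Each sequence $(x_{(j,\bullet)})$ satisfies the recursion $x_{(j,\ell-1)}*x_{(j,\ell)}=x_{(j,\ell+1)}$, so by Lemma \ref{lem3} it is monotonic, and its monotony (the symbol $\diamond\in\{<,>,=\}$ attached to it by a fixed left-order) is determined by the ordered pair of its first two entries, or equivalently by any two consecutive entries. If two such sequences share a generator together with an adjacent generator — or, more robustly, if by Lemma \ref{lem1} one can compare the relevant pairs of endpoints — then one deduces that the two sequences have the same monotony. So the second step is: using the identifications from step one together with Lemma \ref{lem1} (in particular the equivalence $(ii)\Leftrightarrow(iv)$, which lets one ``move'' an exponent across a comparison) and Lemma \ref{lem2}, show that the hypothesis ``$(x_{(i+1,\bullet)})$ and $(x_{(k_i,\bullet)})$ have the same monotony'' forces the pair of endpoints bounding $(x_{(i,\bullet)})$, and the pair bounding $(x_{(k_{i-1},\bullet)})$, to be ordered the same way (with the same $\diamond$) as the endpoints of $(x_{(k_i,\bullet)})$. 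This is where the geometry of the negative tangle is used: in a type-2 alternating tangle the ``turn-back'' arcs connecting $t_{i+1}$ to $t_{k_i}$ and to $t_i$ are precisely the arcs $X_t$-side and $Y_t$-side boundary arcs, and they appear as terminal terms of all three sequences.

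The third step is simply bookkeeping: having matched monotonies on the overlapping generators, conclude that $(x_{(i,\bullet)})$ has the same $\diamond$ as $(x_{(k_i,\bullet)})$ and that $(x_{(k_{i-1},\bullet)})$ does too, which is the assertion. I would present the case $t_i$ a horizontal (integer) subtangle and the case $t_i$ vertical separately, since the shared arcs differ, but the logical skeleton — recursion $+$ Lemma \ref{lem3} gives monotonicity, shared arcs $+$ Lemma \ref{lem1} propagates the monotony symbol — is the same in both.

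I expect the main obstacle to be step one: correctly reading off, from the standard alternating diagram of a negative rational tangle with the chosen labelling of $X_t$, $Y_t$ and the $x_{(i,j)}$, exactly which arcs of $t_{i+1}$ coincide with which arcs of $t_i$ and $t_{k_i}$, and making sure the indices $k_i=2(k+1)-i$ line up with the folding of the continued fraction. The order-theoretic transfer in steps two and three is routine once those identifications are pinned down. A secondary subtlety is handling the boundary cases $i=1$ (where $k_{i-1}=k_0$ should be interpreted consistently, presumably as an endpoint of $t$ or as $Y_t$) and the degenerate subcase where a subtangle has a single crossing, so that its ``sequence'' has only three terms and ``same monotony'' must be read via Lemma \ref{lem1}(iv) rather than via two interior consecutive terms.
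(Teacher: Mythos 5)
Your plan coincides with the paper's proof: the paper fixes an arbitrary left-order, reads off from the standard diagram of the negative tangle the arc identifications $x_{(i+1,0)}=x_{(i,c_i)}$, $x_{(i+1,1)}=x_{(k_i,0)}$, $x_{(k_i,c_{k_i})}=x_{(i,c_i+1)}$, $x_{(i,1)}=x_{(k_{i-1},0)}$, $x_{(k_i,c_{k_i}+1)}=x_{(k_{i-1},1)}$ (its Figure \ref{figadd1}), and then propagates the monotony symbol through these shared arcs using the monotonicity of each sequence $\left( x_{(j,\bullet)} \right)$ from Lemma \ref{lem3} and transitivity of the order --- exactly your steps one through three, with your step one (pinning down the gluing) being precisely where the paper's content lies. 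The only cosmetic difference is that the paper does not need to invoke Lemma \ref{lem1} or Lemma \ref{lem2} directly at this stage: once the identifications are recorded, pure transitivity suffices.
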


\begin{proof}
Let $<$ be any left-order on the elements of $IQ(L)$ and assume that there exists $1 \leq i \leq k$ such that the sequence $\left( x_{(i+1,\bullet)} \right)$ have the same monotony as $\left( x_{(k_i,\bullet)} \right)$ with respect to $<$. 

Let $\diamond$ be a fixed symbol in the set $\left\lbrace =,<,> \right\rbrace$ such that $x_{(i+1,0)} \diamond x_{(i+1,1)}$. Since $x_{(i+1,0)}=x_{(i,c_i)}$, $x_{(i+1,1)}=x_{(k_i,0)}$, and $x_{(k_i,c_{k_i})}=x_{(i,c_i+1)}$ (see Figure \ref{figadd1}), then our assumptions yield the following.
\begin{align}
x_{(i,c_i)} & \diamond x_{(k_i,0)} \nonumber \\
\Rightarrow x_{(i,c_i)} & \diamond x_{(k_i,0)} \diamond x_{(k_i,c_{k_i})} \nonumber \\
 \Leftrightarrow x_{(i,c_i)} & \diamond x_{(k_i,0)} \diamond x_{(i,c_i+1)}.
\label{eq6}
\end{align}
The result (\ref{eq6}) shows that the sequence $\left( x_{(i,\bullet)} \right)$ has the same monotony as  $\left( x_{(k_i,\bullet)} \right)$ and $\left( x_{(i+1,\bullet)} \right)$. Since $x_{(i,1)}=x_{(k_{i-1},0)}$, $x_{(i,c_i+1)}=x_{(k_i,c_{k_i})}$, and $x_{(k_i,c_{k_i}+1)}=x_{(k_{i-1},1)}$ (see Figure \ref{figadd1}), then (\ref{eq6}) implies that 

\begin{align}
x_{(i,1)} & \diamond x_{(i,c_i)} \diamond x_{(k_i,0)} \diamond x_{(i,c_i+1)} \nonumber \\
\Leftrightarrow x_{(k_{i-1},0)} & \diamond x_{(i,c_i)} \diamond x_{(k_i,0)} \diamond x_{(k_i,c_{k_i})} \nonumber \\
\Rightarrow x_{(k_{i-1},0)} & \diamond x_{(k_i,c_{k_i})} \diamond x_{(k_i,c_{k_i}+1)} \nonumber \\
\Leftrightarrow x_{(k_{i-1},0)} & \diamond x_{(k_i,c_{k_i})} \diamond x_{(k_{i-1},1)}.
\label{eq7}
\end{align}
The result (\ref{eq7}) shows that the sequence $\left( x_{(k_{i-1},\bullet)} \right)$ has the same monotony as $\left( x_{(i,\bullet)} \right)$. This completes the proof.
\end{proof}

\begin{figure}[H]
\centering
\includegraphics[width=0.65\linewidth]{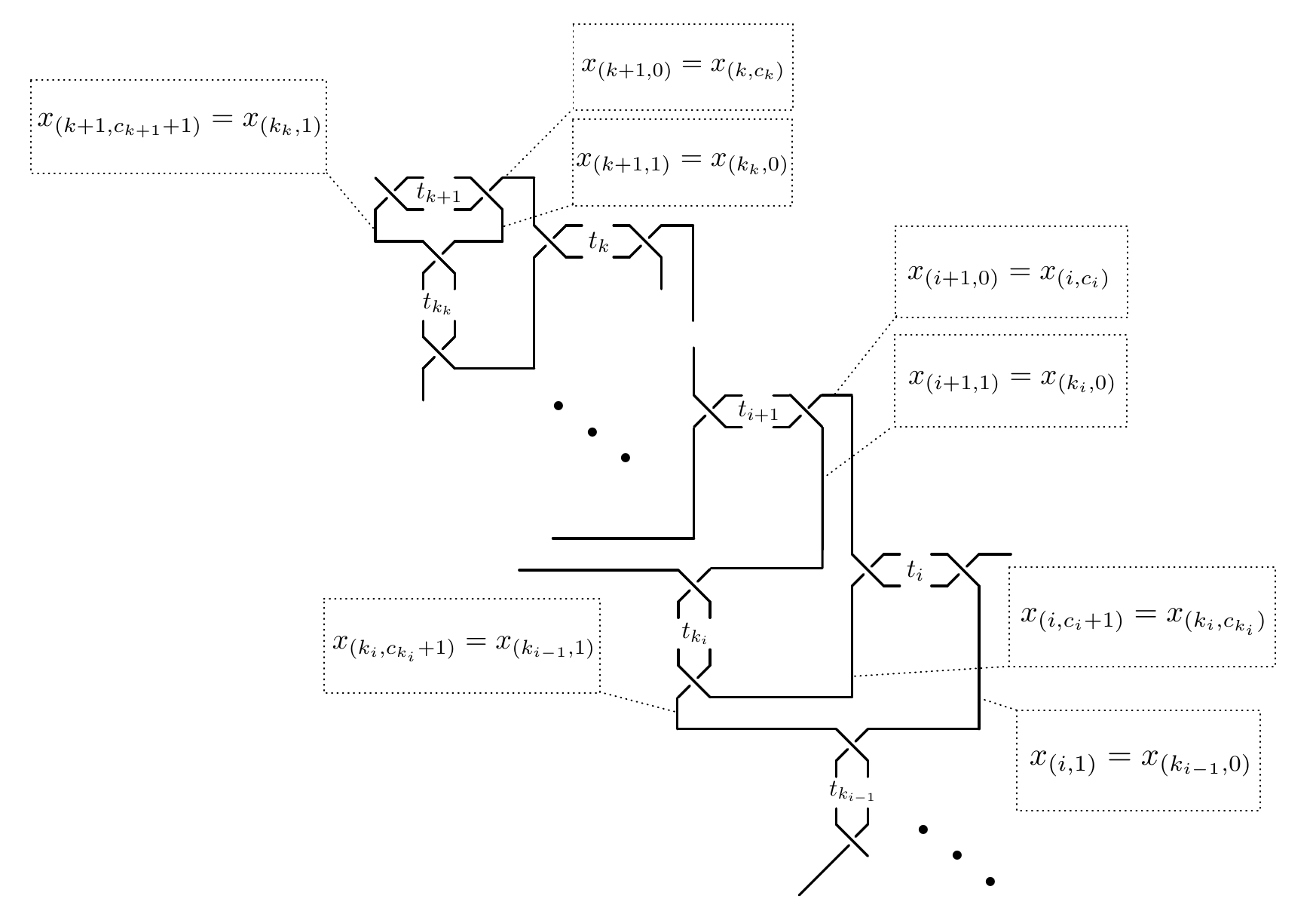}
\caption{Identities used in the proofs of Lemma \ref{lem4} and Proposition \ref{prop1}.}
\label{figadd1}
\end{figure}

\begin{proof}[\textbf{Proof of Proposition \ref{prop1}}]
Let $L$ be a non-split link with a diagram $D$ and assume that $IQ(L)$ is left-orderable. Let $t$ be a tangle part of $D$. We assume without loss of generality that the length $n$ of $t$ is odd. 

\item[\underline{Case 1}: $t$ is positive.] 

Label the arcs of $t$ as in the left of Figure \ref{fig9}. The result we want to prove is equivalent to the assertion that the sequences $\left( x_{(i,\bullet)} \right)$ and $\left( x_{(i+1,\bullet)} \right)$ are of the same monotony for each $1 \leq i \leq n-1$. We proceed by induction on $i$.

Let $<$ be any left-order on $IQ(L)$ and $\diamond$ be a fixed symbol in the set $\lbrace =,<,> \rbrace$ such that $x_{(1,1)} \diamond x_{(1,c_1 + 1)}$. Since $x_{(1,1)} = x_{(2,0)}$ and $x_{(1,c_1+1)} = x_{(2,1)}$ (see Figure \ref{figadd2}), then $x_{(2,0)} \diamond x_{(2,1)}$. Since $<$ is an arbitrary left-order then the sequences $\left( x_{(1,\bullet)} \right)$ and $\left( x_{(2,\bullet)} \right)$ are of the same monotony. This completes the proof of the induction basis.

Suppose that there exists $1 \leq i \leq n-2$ such that $\left( x_{(i,\bullet)} \right)$ and $\left( x_{(i+1,\bullet)} \right)$ are of the same monotony. Let $<$ be any left-order on $IQ(L)$ and $\diamond$ be a fixed symbol in the set $\lbrace =,<,> \rbrace$ such that 

\begin{equation}
x_{(i,c_i)} \diamond x_{(i,c_i+1)}.
\label{eq8}
\end{equation}

The induction hypothesis implies the following.

\begin{equation}
x_{(i+1,1)} \diamond x_{(i+1,c_{i+1}+1)}.
\label{eq9}
\end{equation}

Since $x_{(i,c_i)} = x_{(i+2,0)}$, $ x_{(i,c_i+1)}=x_{(i+1,1)}$, and $x_{(i+1,c_{i+1}+1)}=x_{(i+2,1)}$ (see Figure \ref{figadd2}), then (\ref{eq8}) and (\ref{eq9}) together imply that $x_{(i+2,0)} \diamond x_{(i+2,1)}$. Since $<$ is an arbitrary left-order, then  $\left( x_{(i+1,\bullet)} \right)$ and $\left( x_{(i+2,\bullet)} \right)$ are of the same monotony. 

This shows that the sequences $\left( x_{(i,\bullet)} \right)$ and $\left( x_{(i+1,\bullet)} \right)$ are of the same monotony for each $1 \leq i \leq n-1$. This completes the proof of this case. 

\begin{figure}[H]
\centering
\includegraphics[width=0.65\linewidth]{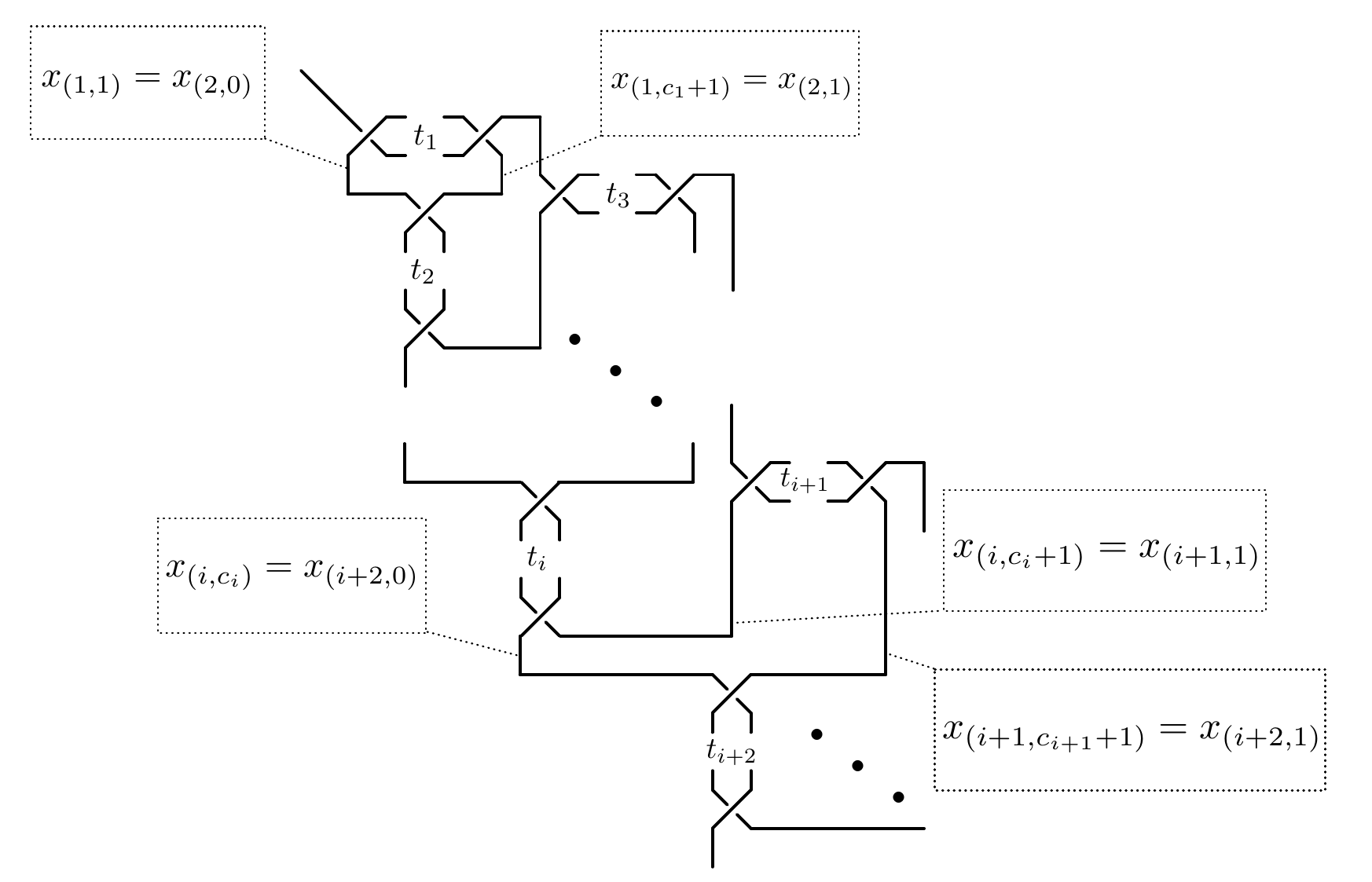}
\caption{Used Identities.}
\label{figadd2}
\end{figure}

\item[\underline{Case 2}: $t$ is negative.] 

Label the arcs of $t$ as in the right of Figure \ref{fig9}. Put $n=2k+1$ and $k_i=2(k+1)-i$ for each $1 \leq i \leq k$. We will show that the sequences $\left( x_{(k+1,\bullet)} \right)$ and $\left( x_{(k_k,\bullet)} \right)$ have the same monotony. 

Let $<$ be any left-order on $IQ(L)$ and $\diamond$ be a fixed symbol in the set $\lbrace =,<,> \rbrace$ such that $x_{(k+1,0)} \diamond x_{(k+1,1)}$. Since the sequence $\left( x_{(k+1,\bullet)} \right)$ is monotonic, then the assumption yields the following 
\begin{equation}
x_{(k+1,0)} \diamond x_{(k+1,1)} \diamond x_{(k+1,c_{k+1}+1)}.
\label{eq10}
\end{equation}

Since $x_{(k+1,0)}=x_{(k,c_k)}$, $x_{(k+1,1)}=x_{(k_k,0)}$, and $x_{(k+1,c_{k+1}+1)}=x_{(k_k,1)}$ (see Figure \ref{figadd1}), then (\ref{eq10}) is equivalent to the following

\begin{equation}
x_{(k,c_k)} \diamond x_{(k_k,0)} \diamond x_{(k_k,1)}.
\label{eq11}
\end{equation}

Since $<$ is an arbitrary left-order, then the result (\ref{eq11}) implies that $\left( x_{(k+1,\bullet)} \right)$ and $\left( x_{(k_k,\bullet)} \right)$ have the same monotony. The result follows inductively by Lemma \ref{lem4}.

\end{proof}

Now we are able to define the coarse presentation of the involutory quandle of a non-split link.
\begin{mydef}
Let $D$ be a link diagram representing a non-split link $L$. Let $\langle x:x\in A(D) | r_1, \hdots, r_k \rangle$ be the presentation of $IQ(L)$ given by the diagram $D$ which has $k$ crossings. The \textit{coarse presentation} of $IQ(L)$ associated to the diagram $D$, denoted by $\mathcal{IQ}(L,D)$, is the set of generators and relations given as follows.

\item[\textbf{Generators:}] $x\in A(D)$ (the set of arcs of $D$).
\item[\textbf{Crossing relations:}] $r_1, \hdots, r_k$ (the set of crossing relations of $D$).
\item[\textbf{Coarse relations:}] $A_{IQ}(t) \subset \left\lbrace | X_t,Y_t | \right\rbrace$ for each tangle part $t$ of $D$.
\label{mydef1}
\end{mydef}

Note that the coarse relations are well defined by Proposition \ref{prop1}. A coarse relation $A_{IQ}(t) \subset \left\lbrace | X_t,Y_t | \right\rbrace$ is said to be \textit{trivial} if $X_t = Y_t$.
\begin{empl}
 Below is an example of a coarse presentation.
\begin{figure}[H]
\centering
\includegraphics[width=0.75\linewidth]{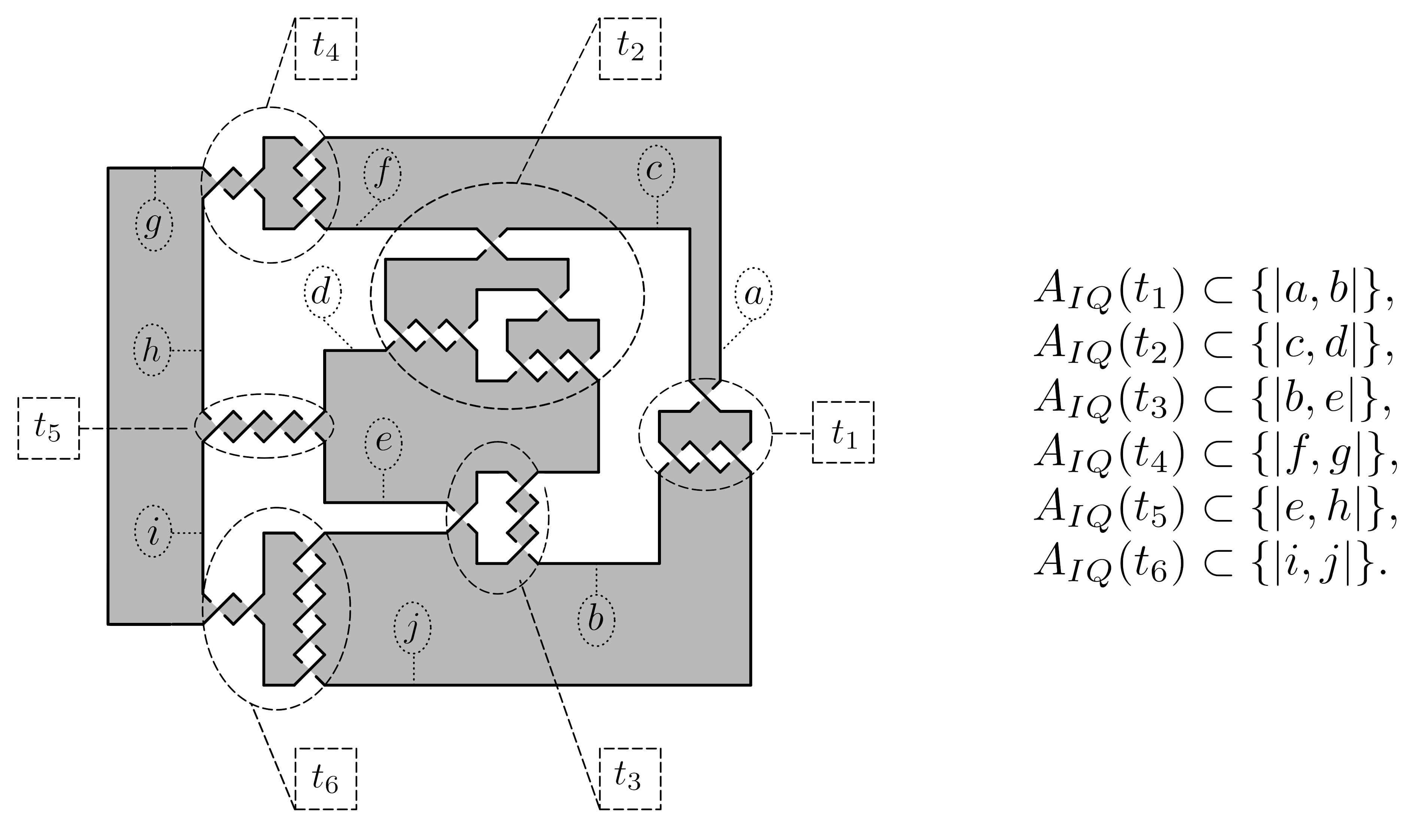}
\caption{A tangle-strand decomposition of a link diagram together with some labeled arcs (left) and the corresponding coarse relations (right).}
\end{figure}
\end{empl}
\subsection{The criterion}
The following result gives a necessary condition for the existence of a left-order on the involutory quandle of a non-split link. 
\begin{theo}
Let $D$ be a diagram of a non-split link $L$. Let $\mathcal{IQ}(L,D)$ be the coarse presentation of the involutory quandle $IQ(L)$ associated to $D$. If $IQ(L)$ is left-orderable, then there exists a non-trivial coarse relation in $\mathcal{IQ}(L,D)$. 
\label{theo_1}
\end{theo}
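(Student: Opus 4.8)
The plan is to prove the contrapositive: assuming that \emph{every} coarse relation in $\mathcal{IQ}(L,D)$ is trivial, I will show that $IQ(L)$ is the one-element quandle, which, being a trivial quandle, is not left-orderable. Accordingly I suppose for contradiction that $<$ is a left-order on $IQ(L)$, so that Proposition~\ref{prop1} is available for every tangle part of $D$.

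The first step is to collapse each tangle part. Fix a tangle part $t$, with elementary subtangles and arcs labelled as in Figures~\ref{fig9} and~\ref{fig10}. Its coarse relation being trivial means $X_t=Y_t$ in $IQ(L)$, so in the chain $X_t\diamond x_{(1,1)}\diamond\cdots\diamond x_{(n,c_n)}\diamond Y_t$ provided by Proposition~\ref{prop1} the unique symbol $\diamond\in\{=,<,>\}$ cannot be ``$<$'' or ``$>$'', since either would force $X_t$ and $Y_t$ to be related by a strict inequality; hence $\diamond$ is ``$=$'' and every arc occurring in the chain equals $X_t=Y_t$. Invoking the identifications of arcs across consecutive elementary subtangles already used in the proofs of Lemma~\ref{lem4} and Proposition~\ref{prop1} (Figures~\ref{figadd1} and~\ref{figadd2}), this chain runs through every arc in $A(t)$, so $A_{IQ}(t)=\{e_t\}$ with $e_t:=X_t=Y_t$.

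The second step is to spread the collapse over $D$. As $L$ is non-split, $D$ is connected; a crossingless $D$ is a single unknotted circle, for which $IQ(L)$ is already the one-element quandle, so I may assume $D$ has a crossing. In the tangle-strand decomposition every crossing sits in a tangle part and every strand joins corners of tangle parts, so each arc of $D$ (being incident to a crossing, since $L$ is non-split) has a sub-arc inside some tangle part and hence represents the element $e_t$ attached to that part; and an arc lying along a strand between tangle parts $t$ and $t'$ lies in both $A_{IQ}(t)$ and $A_{IQ}(t')$, forcing $e_t=e_{t'}$. Connectedness of $D$ then links any two tangle parts by such a chain of arcs and strands, so all the $e_t$ coincide with a single element $e$, every arc of $D$ equals $e$, and since $IQ(L)$ is generated by the arcs of $D$ with $e*e=e$ we conclude $IQ(L)=\{e\}$ --- contradicting left-orderability. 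This yields the theorem.

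The step I expect to be the main obstacle is the bookkeeping underlying the first step and the first claim of the second: one has to verify, from Ito's tangle-strand decomposition and the labelling conventions, both that the chain of Proposition~\ref{prop1} genuinely passes through \emph{every} arc of a tangle part (in particular the arcs meeting its boundary other than $X_t$ and $Y_t$), and that a strand, viewed as a generator of $IQ(L)$, really belongs to the arc-sets of the two tangle parts it connects (equivalently, that every crossing of $D$ lies in a tangle part). Once those identifications are pinned down, the ``$=$'' deduction, the connectivity argument and the non-left-orderability of the one-element quandle are all routine.
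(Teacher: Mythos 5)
Your proposal is correct and follows essentially the same strategy as the paper: triviality of every coarse relation collapses each tangle part's generators to a single element, connectivity then forces all generators of $IQ(L)$ to coincide, and a one-generator (trivial) quandle is not left-orderable, giving the contradiction. The only difference is bookkeeping: you propagate the collapse directly through the connected diagram via arcs shared along strands between adjacent tangle parts, while the paper first collapses each region and then uses connectedness of the dual graph $\left( \Gamma_D \right)^*$ (via Whitney's theorem) to spread the identification from region to region; both rest on the same connectivity fact, and the verifications you flag (every crossing lies in a tangle part, and an arc running along a strand belongs to the arc-sets of both tangle parts it joins) hold by construction of the tangle-strand decomposition.
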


\begin{proof}
Let $L$ be a non-split link with a diagram $D$. Assume that $IQ(L)$ is left-orderable and suppose on the contrary that all the coarse relations of $\mathcal{IQ}(L,D)$ are trivial, meaning that for every tangle part $t$ of $D$, the set $A_{IQ}(t)$ contains a single element which we denote by $x_t$. Let $R$ be a region of $D$ such that $\partial R$ consists of $n$ tangle parts. Fix a tangle $t_1 \in \partial R$. Travelling along $\partial R$ in the clockwise direction starting from $t_1$ provides an enumeration $\partial R = \left\lbrace t_1, \hdots ,t_n \right\rbrace$ such that $A(t_i) \cap A(t_{i+1}) \neq \emptyset$ for every $1 \leq i \leq n-1$. Since $A_{IQ}(t)=\left\lbrace x_t \right\rbrace$ for every tangle part $t$, then $A_{IQ}(t_i) = A_{IQ}(t_{i+1})$ and $x_{t_i} = x_{t_{i+1}}$ for every $1 \leq i \leq n-1$. Thus $A_{IQ}(R)$ contains a single element which we denote by $x_R$. If $R$ is the only region of $D$ then $A_{IQ}(D) = A_{IQ}(R) = \left\lbrace x_R \right\rbrace$. Which means that $IQ(L)$ is trivial with a single generator. This contradicts the fact that $IQ(L)$ is left-orderable. We thereby assume that $D$ has other regions different from $R$. Let $R^{'}$ be a region of $D$ other than $R$. Identify the regions of $D$ with the vertices of the dual graph $\left( \Gamma_D \right)^*$. By connectedness of the graph $\Gamma_D$ and Theorem 6 in \cite{whitney2009non}, there exists a chain of edges connecting the vertices of $\left( \Gamma_D \right)^*$ corresponding to $R$ and $R^{'}$. Equivalently, there exist finitely many regions $R=R_1, \hdots , R_m=R^{'}$, such that $\partial R_j \cap \partial R_{j+1} \neq \emptyset$ for every $1 \leq j \leq m-1$. More precisely, the set $\partial R_j \cap \partial R_{j+1}$ contains the tangle part corresponding to the edge of $\left( \Gamma_D \right)^*$ connecting the vertices that correspond to $R_j$ and $R_{j+1}$. Let $t^{'}_j$ denote the described tangle part. It is clear that $x_{R_j} = x_{R_{j+1}}= x_{t^{'}_{j}}$ for every $1 \leq j \leq m-1$. So finally $ x_R = x_{R^{'}}$. And since the regions $R$ and $R^{'}$ were chosen arbitrarily, then $A_{IQ}(D)$ contains a single element and thus $IQ(L)$ is trivial with a single generator. This contradicts the fact that $IQ(L)$ is left-orderable which implies that our supposition is false. Thus, there exists at least one non-trivial coarse relation in $\mathcal{IQ}(L,D)$.
\end{proof}

\section{Applications}
\subsection{Alternating links.}
Raudal et al. (Theorem 8.1 in \cite{raundal}) showed that the involutory quandle of an alternating link $L$ is not left-orderable if there exists a reduced alternating diagram $D$ of $L$ where the sets $A(D)$ and $A_{IQ}(D)$ are in one-to-one correspondence. Theorem \ref{theo_1} allows us to significantly improve this result by showing that the condition of the existence of the one-to-one correspondence between $A(D)$ and $A_{IQ}(D)$ is finally unnecessary. Then, we have the following theorem.

\begin{theo}
If $L$ is a non-trivial alternating link, then $IQ(L)$ is not left-orderable.
\label{theo_2}
\end{theo}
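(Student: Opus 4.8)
The plan is to apply Theorem \ref{theo_1} directly. Suppose $L$ is a non-trivial alternating link. Fix a reduced alternating diagram $D$ of $L$. The key observation is that a reduced alternating diagram admits a very simple tangle-strand decomposition: each crossing can be taken as its own tangle part (an elementary tangle of length one, fraction $\pm 1$), with no non-trivial rational tangles at all. For such a tangle part $t$ sitting at a single crossing $c$, the two arcs $X_t$ and $Y_t$ are precisely the two ``input'' arcs at $c$ (the overarc on one side and the underarc, or in the involutory setting the two strands that generate the crossing relation), and the coarse relation $A_{IQ}(t) \subset \{|X_t, Y_t|\}$ is trivial exactly when $X_t = Y_t$ as generators of $IQ(L)$.

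\textbf{Reduction to a counting/combinatorial claim.} So by Theorem \ref{theo_1}, if $IQ(L)$ were left-orderable there would be at least one crossing $c$ of $D$ where the two generators meeting at $c$ coincide in $IQ(L)$. I would then argue that for a reduced alternating diagram this forces a contradiction. The mechanism: the involutory quandle of an alternating link surjects onto a dihedral quandle (via the coloring associated to the Goeritz/Gordon--Litherland form, or equivalently via $\pi_1(\Sigma_2(L))$ which is non-trivial and in fact has elements of the right kind since $\det(L) \neq 0$ for a non-split alternating link, and more strongly $|\det(L)| > 1$ for a non-trivial alternating link by the Bankwitz--Crowell type bound). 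Actually the cleanest route is this: if the two arcs at some crossing $c$ represent the same element of $IQ(L)$, then by the crossing relation at $c$ (of the form $z = x^y$ with the two inputs $x,y$), having $x = y$ forces $z = x^x = x$, so all three arcs at $c$ coincide; propagating this through the connected diagram $D$ forces $IQ(L)$ to be generated by a single element, hence trivial — contradicting that a left-orderable quandle that is non-trivial must be infinite (and $IQ(L)$ non-trivial because $L$ is non-trivial, using e.g. that $\Sigma_2(L)$ is not $S^3$ for a non-trivial alternating $L$).

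\textbf{Where the real work is.} The genuine obstacle is making precise the statement ``$X_t = Y_t$ at a crossing forces local collapse, and local collapse propagates.'' The propagation step is essentially the same connectivity argument already used in the proof of Theorem \ref{theo_1} (walking around regions of $\Gamma_D$ via Whitney's theorem), so that part is routine. The subtle point is the very first implication: in $IQ(L)$ the crossing relation identifies $x_{(i,0)}^{x_{(i,1)}} = x_{(i,2)}$, and one must check that $X_t = Y_t$ really does mean the two \emph{generating} arcs at the crossing are equal, and that equality of those two then kills the crossing relation entirely — i.e. that in $IQ(L)$, $a^a = a$ (idempotency, immediate) so the crossing becomes ``transparent'' and the three arcs are all identified. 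Then I need that a reduced \emph{alternating} diagram is \emph{prime-like enough} that transparency at one crossing cascades: this is where one invokes that the Tait graph (checkerboard graph) of a reduced alternating diagram is connected and that every edge connects regions in a way that lets the single-element conclusion propagate across all of $A(D)$. I would carry out the steps in this order: (1) exhibit the crossing-by-crossing tangle-strand decomposition of $D$ and identify $X_t, Y_t$; (2) show triviality of \emph{all} coarse relations is equivalent to every crossing being transparent; (3) show one transparent crossing in a connected reduced diagram forces $IQ(L)$ to have a single generator; (4) note $IQ(L)$ single-generated $\Rightarrow$ trivial $\Rightarrow$ not left-orderable by \cite{raundal}, Proposition 2.5; (5) conclude via Theorem \ref{theo_1} that a non-trivial coarse relation exists — wait, that is the wrong direction. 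Let me restate: Theorem \ref{theo_1} says left-orderable $\Rightarrow$ some non-trivial coarse relation; so to prove $IQ(L)$ \emph{not} left-orderable I must show \emph{all} coarse relations in this decomposition are trivial. Hence the argument is: show every crossing of the alternating diagram yields a \emph{trivial} coarse relation $X_t = Y_t$ — and \emph{that} is the heart of the matter, requiring an analysis of how arc-labels repeat around a reduced alternating diagram, presumably leaning on the structure of the checkerboard coloring and an Euler-characteristic or parity argument showing the two inputs at each crossing are forced to already be identified by the other crossing relations once one imposes left-orderability. The main obstacle, then, is proving that left-orderability of $IQ(L)$ forces $X_t = Y_t$ at \emph{every} crossing of a reduced alternating diagram; I expect this to use the dihedral/Fox-coloring quotient together with the fact that a reduced alternating diagram has no ``nugatory'' structure, so any proper monotone labelling around the regions collapses.
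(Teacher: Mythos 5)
Your proposal correctly identifies the logical shape required (to invoke Theorem \ref{theo_1} you must show that, under a hypothesized left-order, \emph{every} coarse relation of the chosen decomposition of a reduced alternating diagram is trivial), but it never supplies the argument for that step. You explicitly flag it as ``the heart of the matter'' and then only gesture at a mechanism -- ``the dihedral/Fox-coloring quotient together with \dots an Euler-characteristic or parity argument'' -- without developing it; nothing in the paper's framework (nor in any standard fact about determinants or Fox colorings) turns such a quotient into the statement that $X_t=Y_t$ in $IQ(L)$ at every crossing, and the first half of your write-up actually argues the \emph{converse} direction of Theorem \ref{theo_1} (one trivial coarse relation $\Rightarrow$ global collapse), which, even after you catch the reversal, leaves the corrected claim unproven. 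So there is a genuine gap: the missing idea is how left-orderability forces the collapse $X_t=Y_t$ everywhere.

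The paper closes exactly this gap with a minimal-element argument that your proposal never touches: assuming a left-order $<$, take $x_0$ the minimum of the finite set $A_{IQ}(D)$ and a tangle part $t_0$ containing it; Proposition \ref{prop1} (monotonicity of the generators along a tangle part between $X_t$ and $Y_t$) forces $x_0\in\{X_{t_0},Y_{t_0}\}$, and when the arc carrying $x_0$ sits strictly inside the chain of the adjacent tangle part $t_1$, minimality of $x_0$ forces the symbol $\diamond$ of that chain to be equality, i.e.\ $X_{t_1}=Y_{t_1}=x_0$ and the coarse relation at $t_1$ is trivial. This propagates around the boundary of a region and then, via connectedness of the dual graph $(\Gamma_D)^*$ (Whitney), to all regions, so all coarse relations are trivial, contradicting Theorem \ref{theo_1}; the split case is then handled by passing to a non-split alternating sub-link whose involutory quandle is a sub-quandle. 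Your choice of a crossing-by-crossing tangle-strand decomposition is admissible and would be compatible with this argument, but without the minimum-generator/monotonicity mechanism (or a genuine substitute for it) the proposal does not constitute a proof. You also leave the split case unaddressed.
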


\begin{proof}
Let $L$ be a non-split alternating link with an alternating reduced diagram $D$. Suppose on the contrary that $IQ(L)$ is left-orderable and let $<$ be any left-order on $IQ(L)$. Denote by $x_0$ the minimum of the generators $A_{IQ}(D)$ of $IQ(L)$ with respect to $<$ and let $t_0$ be a tangle part of $D$ such that $x_0 \in A_{IQ}(t_0)$.

Let $R$ be a fixed region of $D$ such that $t_0 \in \partial R$. We denote $t_0,\dots, t_{n+1}=t_0$ the tangles that we go through successively when we move along $\partial R$ starting from $t_0$ in the clockwise direction. Note that $A(t_i) \cap A(t_{i+1}) \neq \emptyset$ for each $0 \leq i \leq n$. By Proposition \ref{prop1}, we know that the minimum $x_0$ belongs to $\{X_{t_0},Y_{t_0}\}$. Without loss of generality we can suppose that $x_0=X_{t_0}$ and $X_{t_i} \in A(t_i) \cap A(t_{i+1})$ for every $0 \leq i \leq n$ (see Figure \ref{fig14}). By the coarse relations in $\mathcal{IQ}(L,D)$, we get that

\begin{equation}
X_{t_i} \in A_{IQ}(t_{i+1}) \subset \left\lbrace | X_{t_{i+1}},Y_{t_{i+1}} | \right\rbrace \text{ for every } 0 \leq i \leq n-1. \label{eq12} 
\end{equation}

\begin{figure}[H]
\centering
\includegraphics[width=0.25\linewidth]{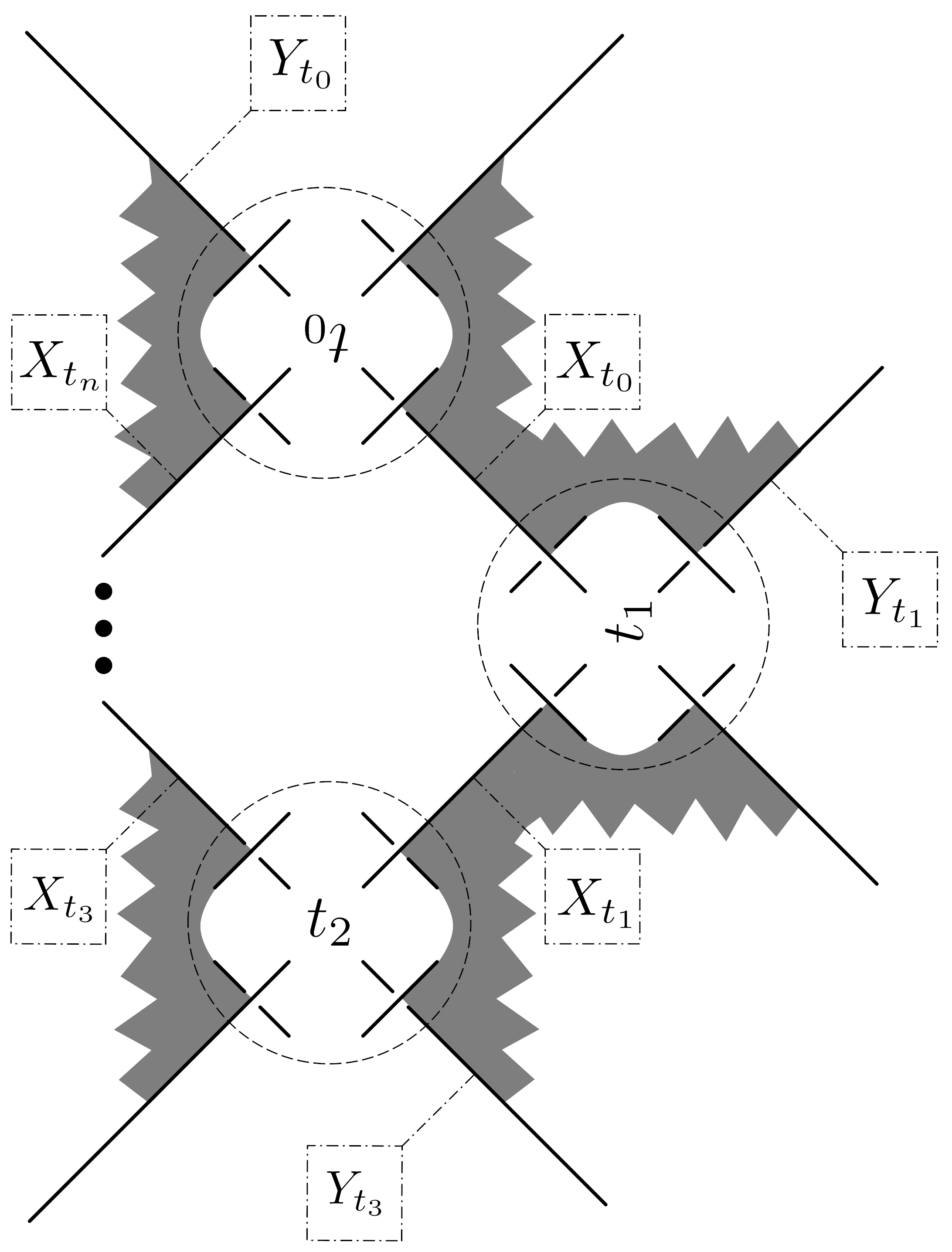}
\caption{The boundary of a region of an alternating diagram.}
\label{fig14}
\end{figure}

By (\ref{eq12}), we have that $x_0 = X_{t_0} \in \left\lbrace | X_{t_1},Y_{t_1} | \right\rbrace$. Since $x_0$ is the minimum element of $A_{IQ}(D)$ with respect to $<$ and $X_{t_0}$ is distinct from the arcs $X_{t_1}$ and $Y_{t_1}$ in $A(t_1)$, then by Proposition \ref{prop1} we have $x_0 = X_{t_1}=Y_{t_1}$. This implies that the coarse relation $A_{IQ}(t_1) \subset \left\lbrace | X_{t_1},Y_{t_1} | \right\rbrace$ is trivial i.e. $A_{IQ}(t_1)=\left\lbrace x_0 \right\rbrace$. By (\ref{eq12}), we get $x_0=X_{t_1} \in \left\lbrace | X_{t_2},Y_{t_2} | \right\rbrace$. By using the same argument we get that $x_0=X_{t_2}=Y_{t_2}$, which implies that the coarse relation $A_{IQ}(t_2) \subset \left\lbrace | X_{t_2},Y_{t_2} | \right\rbrace$ is also trivial. By iterating this argument, we show that all the coarse relations $A_{IQ}(t_i) \subset \left\lbrace | X_{t_i},Y_{t_i} | \right\rbrace$ are trivial and then $A_{IQ}(R)=\left\lbrace x_0 \right\rbrace$.

Let $R'$ be another region of $D$. There are two posibilities:
\item[\underline{Case 1:} $\partial R \cap \partial R^{'} \neq \emptyset$.] Let $t^{'}$ be an element of $\partial R \cap \partial R^{'}$. Since $A_{IQ}(t^{'}) \subset A_{IQ}(R)=\left\lbrace x_0 \right\rbrace$, then $R^{'}$ satisfies the same assumptions as $R$. So, all coarse relations corresponding to the tangle parts constituting $\partial R^{'}$ are trivial.
\item[\underline{Case 2:} $\partial R \cap \partial R^{'} =\emptyset$.]
  We know that the regions of $D$ are identified with the vertices of the connected dual graph $\left( \Gamma_D \right)^*$. By connectedness of $\left( \Gamma_D \right)^*$ and Theorem 6 in \cite{whitney2009non}, we can find a chain of edges connecting the vertices of $\left( \Gamma_D \right)^*$ corresponding to $R$ and $R^{'}$.\\
  Equivalently, there exist finitely many regions $R=R_0,R_1, \hdots , R_m=R^{'}$, such that for every $0 \leq j \leq m$, the set $\partial R_j \cap \partial R_{j+1}$ contains the tangle part corresponding to the edge of $\left( \Gamma_D \right)^*$ connecting the vertices that correspond to $R_j$ and $R_{j+1}$. As in Case 1, since $\partial R \cap \partial R_{1} \neq \emptyset$, then all coarse relations corresponding to the elements of $\partial R_1$ are trivial. By iterating the same argument we conclude that the coarse relations corresponding to the elements of $\partial R_j$ are trivial for each $j$, $0 \leq j \leq m$. In particular, the coarse relations corresponding to the elements of $\partial R^{'}$ are trivial. 

Since the region $R^{'}$ was chosen arbitrarily, then we conclude that all the coarse relations of $\mathcal{IQ}(L,D)$ are trivial, which is a contradiction by Theorem \ref{theo_1}. 

If $L^{'}$ is a non-trivial split alternating link, then the involutory quandle of any non-split alternating sub-link $L$ of $L^{'}$ in not left-orderable. Thus, $IQ(L^{'})$ has a sub-quandle that is not left-orderable. Consequently, $IQ(L^{'})$ is not left-orderable.
\end{proof}

\subsection{Augmented alternating links.}
Let $D$ be a reduced alternating diagram of a non-trivial non-split alternating link $L$. Let $J$ be an embedded circle in the complement of $L$ in $S^3$ such that $J$ intersects the projection plane in two points and bounds a disk containing two points of $D$ in its interior and that lies in the perpendicular plane to the projection plane. The link $L \cup J$ is called an \textit{augmentation} of $L$. If $L$ is prime and non-isotopic to a torus link $T(2,k)$ for any integer $k$, then the link $L \cup J$ is called an \textit{augmented alternating link}. Recall that the torus link $T(2,k)$ is the numerator closure of the elementary tangle $-k$. Augmented alternating links were introduced by Adams in \cite{adams} where he shows that augmented alternating links are non-alternating and non-split.

It has been shown in \cite{hoste2017involutory} that the involutory quandle of some augmented alternating Montesinos links are finite. On the other hand, certain augmentations of a rational link also have finite involutory quandles as shown in \cite{mellor2022finite}. This provides infinitely many examples of augmented alternating links having non-left-orderable involutory quandles. In fact, we prove that the involutory quandle of any augmented alternating link is not left-orderable as stated in the following theorem.

\begin{theo}
If $L$ is an augmented alternating link, then $IQ(L)$ is not left-orderable. 
\label{theo_3}
\end{theo}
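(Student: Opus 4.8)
The plan is to invoke the criterion of Theorem~\ref{theo_1} for a well-chosen diagram of $L$ and to run the same propagation scheme as in the proof of Theorem~\ref{theo_2}. Write $L=L_0\cup J$, where $L_0$ is a prime non-trivial alternating link not isotopic to any $T(2,k)$ and $J$ is an augmentation circle; fix a reduced alternating diagram $D_0$ of $L_0$ and draw $J$ as a small circle encircling the two strands of $D_0$ that it bounds a disk around, obtaining a diagram $D=D_0\cup J$ of $L$. Then $D$ is a connected diagram of the non-split link $L$ (non-split by Adams, \cite{adams}), and it is alternating everywhere except at the four crossings where $J$ meets the two encircled strands. First I would fix a tangle--strand decomposition of $D$: keep the tangle parts inherited from the reduced alternating diagram $D_0$, refined into rational sub-tangles where $J$ crosses them, and add near $J$ a single-crossing ($\pm1$) tangle part at each of the four crossings of $J$ with $L_0$ (equivalently, the two clasps of $J$ with the two encircled strands). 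Proposition~\ref{prop1} applies to every tangle part of this decomposition, so it yields a coarse presentation $\mathcal{IQ}(L,D)$ to which Theorem~\ref{theo_1} can be applied.

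Next, assume for contradiction that $IQ(L)$ is left-orderable, fix a left-order $<$, and let $x_0$ be the $<$-minimum of the finite generating set $A_{IQ}(D)$. Choose a tangle part $t_0$ with $x_0\in A_{IQ}(t_0)$; by Proposition~\ref{prop1} we have $x_0\in\{X_{t_0},Y_{t_0}\}$, and we may assume $x_0=X_{t_0}$. Travelling around the boundary of a region $R$ of $\Gamma_D$ with $t_0\in\partial R$, exactly as in the proof of Theorem~\ref{theo_2}: whenever the value $x_0$ lies in $A_{IQ}(t)$ for a boundary tangle part $t$ and is carried there by an arc different from $X_t$ and $Y_t$, the monotone chain $X_t\diamond\cdots\diamond Y_t$ provided by Proposition~\ref{prop1}, together with the minimality of $x_0$, forces $X_t=Y_t=x_0$, so the coarse relation of $t$ is trivial; iterating gives $A_{IQ}(R)=\{x_0\}$, and this conclusion then spreads to every region of $D$ via the connectivity of the dual graph $\left(\Gamma_D\right)^{*}$, exactly as in Case~2 of the proof of Theorem~\ref{theo_2}. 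Hence every coarse relation of $\mathcal{IQ}(L,D)$ is trivial, contradicting Theorem~\ref{theo_1}. Therefore $IQ(L)$ is not left-orderable.

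The hard part is the propagation step at the tangle parts near $J$, for these are the only non-alternating crossings of $D$, and the type~1/type~2 bookkeeping that makes the alternating case transparent no longer applies to them verbatim. What must be checked is that when the minimum value $x_0$ first reaches such a tangle part $t$ along a region boundary, it is carried by an arc of $t$ distinct from $X_t$ and $Y_t$ --- so that $x_0\in A_{IQ}(t)\subset\{|X_t,Y_t|\}$ genuinely forces the collapse $X_t=Y_t=x_0$ --- rather than already sitting on $X_t$ or $Y_t$, in which case no triviality would follow. I would settle this by a short local analysis near $J$: for each of the four $J$-crossings (equivalently, each of the two clasps), identify the arcs playing the role of $X_t$ and $Y_t$, describe how these tangle parts border the regions of $\Gamma_D$, and verify that a region boundary entering such a $t$ from a neighbouring tangle part always does so along one of its interior arcs. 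Once this local verification is in place, the rest of the argument is word for word that of Theorem~\ref{theo_2}.
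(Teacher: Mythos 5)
Your overall strategy (apply Theorem~\ref{theo_1} and run the minimum-propagation of Theorem~\ref{theo_2}) is the right spirit, but the step you defer --- the ``short local analysis near $J$'' --- is precisely the crux, and in the form you state it, it is false. Put the augmentation circle in standard position, so that one arc $\alpha$ of $J$ passes under both encircled strands and the other arc $\beta$ passes over both; the crossing relations are $\beta=\alpha^{y}=\alpha^{Y_T}$, $y^{\beta}=X_T$, $Y_T^{\beta}=z$, where $y,X_T$ (resp.\ $Y_T,z$) are the arcs of the two strands before and after passing under $\beta$. Taking your clasp tangle parts, the monotone chains given by Lemma~\ref{lem3}/Proposition~\ref{prop1} are $\alpha \diamond y \diamond \beta \diamond X_T$ and $\alpha \diamond Y_T \diamond \beta \diamond z$, so the extremal arcs (the arcs playing the role of $X_t,Y_t$) of the two clasps are $\left\lbrace \alpha, X_T\right\rbrace$ and $\left\lbrace \alpha, z\right\rbrace$: the arc $\alpha$ is extremal for \emph{both} clasp parts, and the two clasps are consecutive along the boundary of each region adjacent to the strand $\alpha$. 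Hence a region boundary does enter a tangle part near $J$ along an $X_t/Y_t$ arc, and if the global minimum $x_0$ happens to be $\alpha$ (which nothing excludes a priori), neither clasp's coarse relation is forced to be trivial; the propagation stalls, and it also cannot be restarted inside the alternating part, because the alternating tangle $T$ is not a closed alternating diagram --- its chains terminate on the arcs $y,X_T,Y_T,z$ at the clasps, exactly where the forcing fails. The same obstruction appears if you use four single-crossing parts instead of two clasps ($\alpha$ is extremal for both crossings it meets).

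The missing idea, and the route the paper actually takes, is algebraic rather than combinatorial: isotope $L$ to $N\left(-\frac{1}{2}+\frac{1}{2}+\frac{1}{T_c}\right)$ with $T$ a connected alternating tangle, and use left-orderability itself to kill the augmentation circle. From $\alpha^{Y_T}=\alpha^{y}$ Lemma~\ref{lem1} gives $Y_T=y$, and then $X_T=y^{\beta}=Y_T^{\beta}=z$. With these identifications the generators $A(T)$ and the crossing relations of $T$ present exactly the involutory quandle of the alternating link $D(T)$, so the Theorem~\ref{theo_2} argument applies to the rational subtangles of $T$ and shows $A_{IQ}(T)$ is a single element; the relations at the four $J$-crossings then collapse $\alpha$ and $\beta$ onto it as well, so every coarse relation of $\mathcal{IQ}(L,D)$ is trivial, contradicting Theorem~\ref{theo_1}. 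Without this reduction (or some substitute handling the case $x_0\in\left\lbrace |\,\alpha,\beta\,|\right\rbrace$ and the failure of the interior-arc property along $J$), your proof is incomplete.
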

\begin{proof}
Let $L$ be an augmented alternating link. It is easy to see that there exists a connected alternating tangle $T$ such that $L$ is isotopic to $N\left( -\frac{1}{2} + \frac{1}{2} + \frac{1}{T_c} \right)$. Without loss of generality, the tangle $T$ can be assumed of type 2. Denote by $y$ and $z$ the arcs of $T$ passing respectively through the points $NW$ and $SE$ and label by $\alpha$ and $\beta$ the two arcs of the augmentation circle as in Figure \ref{fig15}. 

\begin{figure}[H]
\centering
\includegraphics[width=0.5\linewidth]{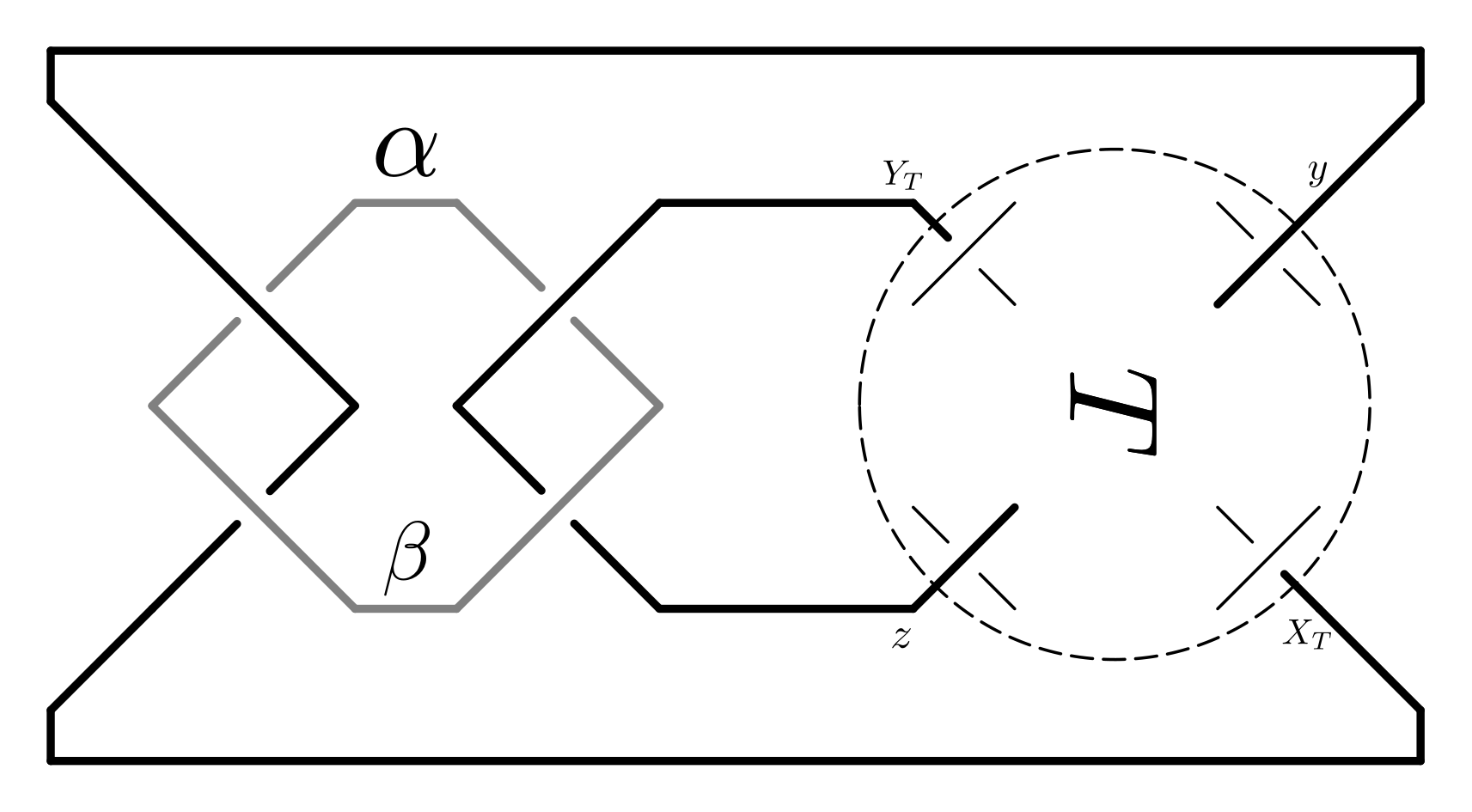}
\caption{The diagram $n\left( -\frac{1}{2} + \frac{1}{2} + \frac{1}{T_c} \right)$ and the arcs $\alpha$ and $\beta$. }
\label{fig15}
\end{figure} 

Suppose on the contrary that $IQ(L)$ is left-orderable. The generating set of $IQ(L)$ is $A(T) \cup \left\lbrace \alpha, \beta \right\rbrace$ and in addition to the relations $r_c, c\in C(T)$, we have the relations $\beta=\alpha^{Y_T}=\alpha^{y}, Y_T^{\beta} = z , y^{\beta}= X_{T}$. By Lemma \ref{lem1}, these relations imply that $Y_T=y$ and $X_T=z$. On the other hand, the involutory quandle whose generating set is $A(T)$ subject to the relations $Y_T=y,X_T=z$, and $r_c, c\in C(T)$ is exactly the involutory quandle of the alternating link $D(T)$. This enable us to use an analogous argument as in the proof of Theorem \ref{theo_2} to show that the coarse relations in $\mathcal{IQ}(L,D)$ corresponding to the rational subtangles of the tangle $T$ are all trivial. Hence, the subset $A_{IQ}(T)$ of generators of $IQ(L)$ contains a single element. This clearly implies that all the coarse relations are trivial which is a contradiction by Theorem \ref{theo_1}. Thereby, the quandle $IQ(L)$ is not left-orderable.
\end{proof}

\subsection{Quasi-alternating $3$-braid closures.}
Let $B_3$ be the $3$-braid group. It is known that $B_3$ is generated by the elementary braids $\sigma_1$ and $\sigma_2$ depicted in Figure \ref{fig11} subject to the relation $\sigma_1 \sigma_2 \sigma_1 = \sigma_2 \sigma_1 \sigma_2$.

\begin{figure}[H]
\centering
\includegraphics[width=0.6\linewidth]{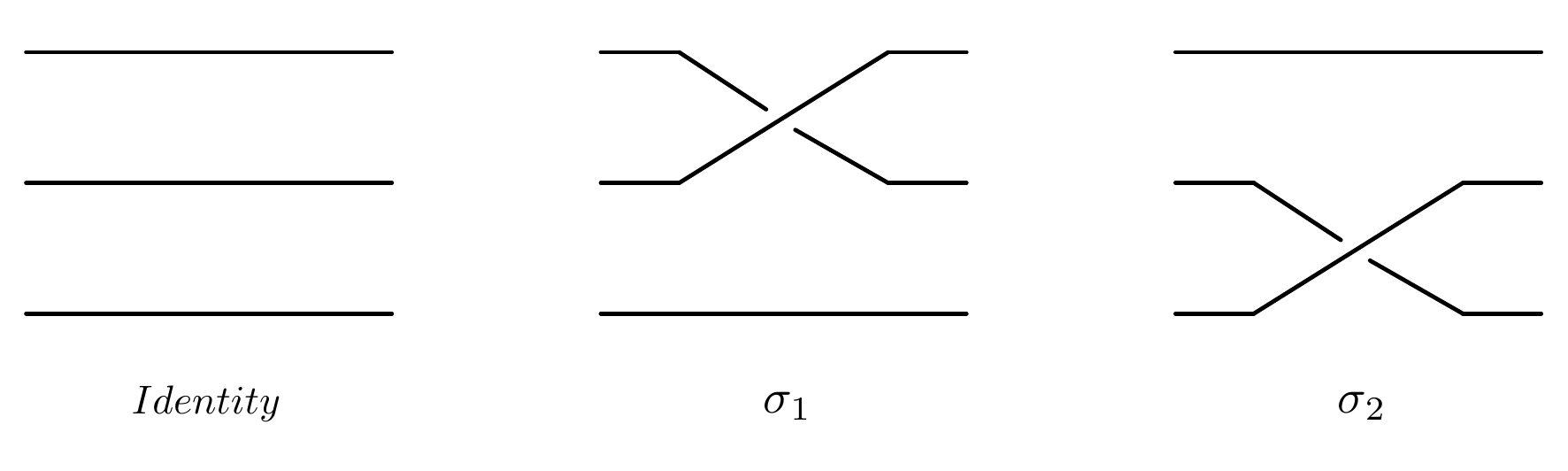}
\caption{The identity element and the generators of the group $B_3$.}
\label{fig11}
\end{figure}

Let $h$ denote the braid $(\sigma_1 \sigma_2)$. We denote by $L\left((t_1,s_1),\hdots,(t_n,s_n),h^l\right)$ the link represented by the diagram depicted in Figure \ref{fig12} where $t_1, \hdots , t_n$ and $s_1, \hdots , s_n$ are rational tangles of type 2. It is known that a split link has a zero determinant. We check that the determinant of $L\left((t_1,s_1),\hdots,(t_n,s_n),h^l\right)$ is non-zero. So it is non-split.

\begin{figure}[H]
\centering
\includegraphics[width=0.5\linewidth]{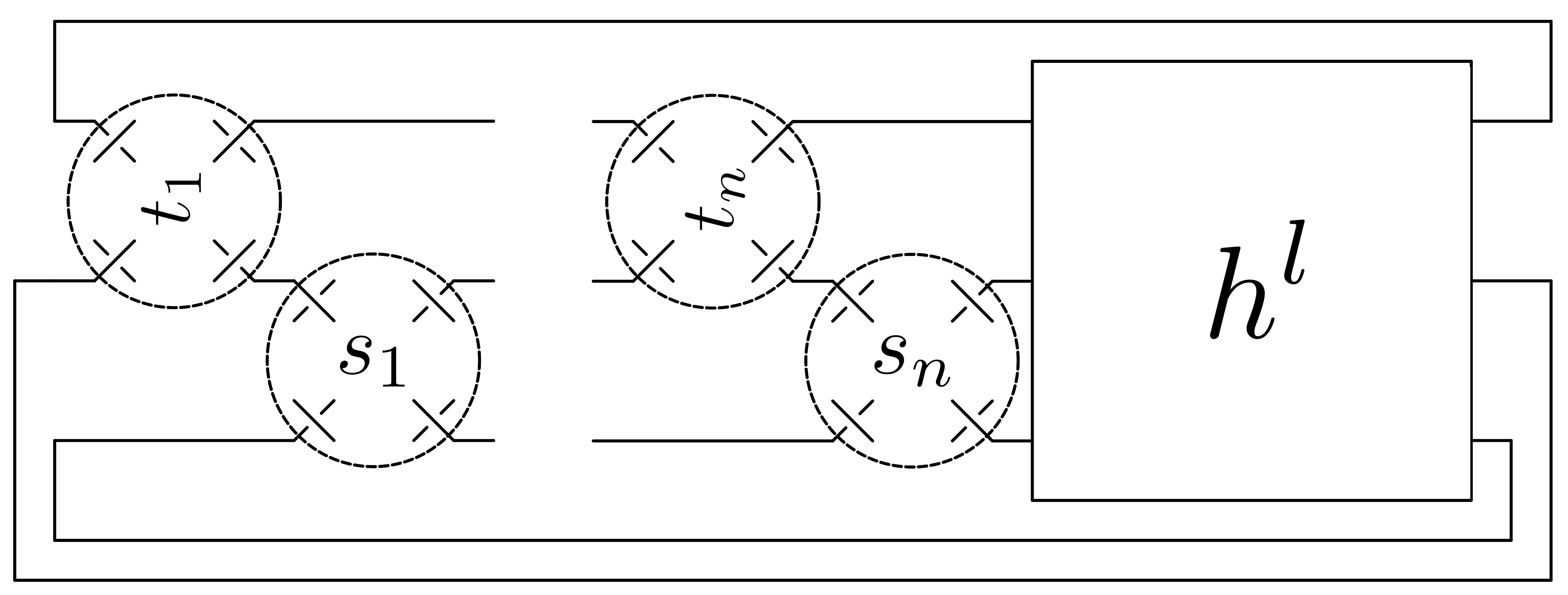}  
\caption{The link $L\left((t_1,s_1),\hdots,(t_n,s_n),h^l\right)$.}
\label{fig12}
\end{figure}

\begin{theo}
Let $L$ be the link $L\left((t_1,s_1),\hdots,(t_n,s_n),h^{l})\right)$ depicted in Figure \ref{fig12}. 
\begin{enumerate}
\item[(i)] If $l=3$ and $| s_n | \geq 1$, then $IQ(L)$ is not left-orderable.  
\item[(ii)] If $l=-3$ and $| t_1 | \leq 1$, then $IQ(L)$ is not left-orderbale.
\end{enumerate}

\label{theo_4}
\end{theo}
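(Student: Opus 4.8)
The plan is to argue by contradiction by means of the criterion of Theorem~\ref{theo_1}, in the spirit of the proofs of Theorems~\ref{theo_2} and~\ref{theo_3}. Fix the diagram $D$ of $L=L((t_1,s_1),\dots,(t_n,s_n),h^l)$ read off from Figure~\ref{fig12}, equipped with a tangle-strand decomposition in which the standard subtangles of the rational tangles $t_i,s_i$ supply most of the tangle parts, while the six crossings of the full twist $h^{\pm3}=(\sigma_1\sigma_2)^{\pm3}$ are cut into a small explicit local family of tangle parts so that Definition~\ref{mydef1} applies. Since the determinant of $L$ is non-zero, $L$ is non-split, so Theorem~\ref{theo_1} reduces everything to proving the following: if $IQ(L)$ is left-orderable, then \emph{every} coarse relation of $\mathcal{IQ}(L,D)$ is trivial.

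Consider first $(i)$, so $l=3$. Assume $IQ(L)$ is left-orderable, fix a left-order $<$, let $x_0$ be the $<$-minimum of the generating set $A_{IQ}(D)$, and pick a tangle part $t_0$ with $x_0\in A_{IQ}(t_0)$. By Proposition~\ref{prop1}, $x_0\in\{X_{t_0},Y_{t_0}\}$. Moreover, exactly as in the proof of Theorem~\ref{theo_2}, whenever the arc that carries $x_0$ is interior to an adjacent tangle part $t$ — distinct, as an arc, from both $X_t$ and $Y_t$ — minimality of $x_0$ forces the symbol $\diamond$ attached to $t$ by Proposition~\ref{prop1} to be the equality symbol, so that the coarse relation of $t$ is trivial and $A_{IQ}(t)=\{x_0\}$. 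One then propagates this collapse around the boundary of a region, and over all of $D$ using connectedness of $\Gamma_D$ and of its dual together with Theorem~6 of \cite{whitney2009non}, just as in Theorem~\ref{theo_2}. On the part of $D$ built from the type-$2$ tangles $t_i,s_i$ the arc incidences around each region are those of an alternating diagram, so there the propagation is literally that of Theorem~\ref{theo_2}; the only genuinely new step is to make the collapse cross the full-twist region.

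This is where the hypotheses are used. Because $h^{3}=(\sigma_1\sigma_2)^3$ is a full positive twist, it is not alternating relative to the negative tangles $t_i,s_i$, so the region $R_0$ of $D$ that meets $h^{3}$ has boundary incidences different from the generic alternating picture of Figure~\ref{fig14}. Drawing $R_0$ explicitly, one checks that, against the three closure strands and the tangle part $s_n$ — which is a genuinely non-trivial tangle part precisely because $|s_n|\ge1$ — consecutive boundary tangle parts of $R_0$ still share an arc interior to the next one; hence the interior-arc argument above applies there too, yields $A_{IQ}(s_n)=\{x_0\}$, and thereby feeds the collapse into the alternating part and then, by the previous paragraph, everywhere on $D$. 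All coarse relations are then trivial, contradicting Theorem~\ref{theo_1}, which proves $(i)$. Statement $(ii)$ is obtained by the mirror-symmetric argument: one works with the $<$-maximum of $A_{IQ}(D)$ in place of the minimum, the negative full twist $h^{-3}$ plays at the $t_1$ end of the braid word the role that $h^{3}$ played at the $s_n$ end, and the size condition accordingly reverses direction, so that $|t_1|\le1$ is exactly what makes the corresponding local region behave as required.

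I expect the crux to be precisely this local analysis at $h^{\pm3}$: showing that, despite the loss of alternation, each boundary tangle part of the region touching the full twist shares with its successor an arc interior to that successor, and pinning down why $|s_n|\ge1$ (respectively $|t_1|\le1$) is the exact condition that forces this. A subsidiary point, to be settled first, is the choice of how to subdivide the six crossings of $h^{\pm3}$ into tangle parts so that Proposition~\ref{prop1} can be applied to each of them and the bookkeeping of the sets $A_{IQ}(t)$ for those parts remains manageable.
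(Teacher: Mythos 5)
There is a genuine gap, and it sits exactly where you place your ``crux''. Your plan is to run the minimum-propagation argument of Theorem \ref{theo_2}: the minimum generator $x_0$ lies in $\{X_t,Y_t\}$ of some tangle part, and whenever the arc carrying $x_0$ is \emph{interior} to the monotone chain of an adjacent tangle part (Proposition \ref{prop1}), minimality forces that coarse relation to be trivial, and one propagates. That mechanism depends on the alternating incidence pattern of Figure \ref{fig14}: the shared arc must be distinct, as an arc, from both extremes $X_t$ and $Y_t$ of the next chain. At the full twist $h^{\pm3}$ alternation fails precisely so that the shared arcs at that junction are extreme arcs of the neighbouring chains, and then $x_0\in\{|X_t,Y_t|\}$ gives no information and the collapse does not cross the twist region. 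You assert that ``one checks'' the interior-arc incidence still holds along the region meeting $h^{3}$ when $|s_n|\ge1$, but this is not checked, and it is not what the hypothesis does: $|s_n|\ge1$ is not an incidence condition on arcs, and your sketch never explains how it would become one. Since the entire content of the theorem is this step, the proposal defers rather than supplies the proof.

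For comparison, the paper does not prove non-left-orderability by showing all coarse relations are trivial (that happens only in the degenerate subcase $x=y=z$). Instead it fixes the three generators $x,y,z$ of the full twist (Figure \ref{fig16}), splits into cases according to their relative order, and in each case derives an outright order contradiction by combining: (a) the self-duplicating templates of Lemma \ref{lem5}, which say how the orientation of two adjacent edges of $\Gamma_{D,<}$ forces the orientations of all edges to one side along the braid, and (b) repeated applications of Lemma \ref{lem1} to explicit left-associative products such as $z^{yx}$, $x^{zyx}$, $y^{zyx}$. The hypothesis $|s_n|\ge1$ enters only at the very end of Case 2, where it guarantees $z^{yx}=x_{(n_1,1)}$ is the second element of the chain of $s_n$, so that Proposition \ref{prop1} yields $\beta\le z^{yx}$, contradicting the previously established $z^{yx}<\beta$. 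None of this machinery (templates, the $x,y,z$ computations, the case analysis) appears in your proposal, and without it, or a genuine substitute for the local analysis at $h^{\pm3}$, the argument does not go through; the same applies to your mirror treatment of case $(ii)$, which inherits the gap.
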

To prove Theorem \ref{theo_4}, we need the following lemma which is easily deduced from the transitivity of the order $<$.

\begin{lem}[Self duplicating templates]
Let $L$ be the link $L \left( (t_1,s_1), \hdots,(t_n,s_n), h^{3} \right) $ presented by the diagram $D$ depicted in Figure \ref{fig12}. Assume that $IQ(L)$ is left-orderable and let $<$ be any left-order on $IQ(L)$. We fix the three arcs $x$, $y$, and $z$ of the full-twist $h^3$ as in Figure \ref{fig16}. We choose the isotopy classes of the tangles $t_i$ and $s_i$ such that $X_{t_1}=x$, $Y_{s_1}=z$, $X_{s_n}=y^{zyx}$, $A(t_i) \cap A(t_{i-1})=\left\lbrace X_{t_i} \right\rbrace$ for $2 \leq i \leq n$, $A(t_i) \cap A(s_i)=\left\lbrace Y_{t_i} \right\rbrace$ for $2 \leq i \leq n$, $A(s_i) \cap A(t_{i+1})= \left\lbrace X_{s_i} \right\rbrace$ for $1 \leq i \leq n-1$, and $A(s_i) \cap A(s_{i-1})=\left\lbrace Y_{s_i} \right\rbrace$ for $2 \leq i \leq n$. With these notations, we have the following:

\begin{itemize}
\item[(S.D.T.1)] If there exists an integer $1 \leq i \leq n$ such that $A_{IQ}(t_i) \subset \left( \infty,Y_{t_i} \right]_<$ and $A_{IQ}(s_i) \subset \left( \infty,X_{s_i} \right]_<$, then $A_{IQ}(t_j) \subset \left( \infty,Y_{t_j} \right]_<$ and $A_{IQ}(s_j) \subset \left( \infty,X_{s_j} \right]_<$ for every $i \leq j \leq n$. 
\item[(S.D.T.2)] If there exists an integer $1 \leq i \leq n$ such that $A_{IQ}(t_i) \subset \left( \infty,X_{t_i} \right]_<$ and $A_{IQ}(s_i) \subset \left( \infty,Y_{s_i} \right]_<$, then $A_{IQ}(t_j) \subset \left( \infty,X_{t_j} \right]_<$ and $A_{IQ}(s_j) \subset \left( \infty,Y_{s_j} \right]_<$ for every $i \leq j \leq n$. 
\item[(S.D.T.3)] If there exists an integer $1 \leq i \leq n$ such that $A_{IQ}(t_i) \subset \left( \infty,X_{t_i} \right]_<$ and $A_{IQ}(s_i) \subset \left( \infty,X_{s_i} \right]_<$, then $A_{IQ}(t_j) \subset \left( \infty,X_{t_j} \right]_<$ and $A_{IQ}(s_j) \subset \left( \infty,X_{s_j} \right]_<$ for every $1 \leq j \leq i$. 
\item[(S.D.T.4)] If there exists an integer $1 \leq i \leq n$ such that $A_{IQ}(t_i) \subset \left( \infty,Y_{t_i} \right]_<$ and $A_{IQ}(s_i) \subset \left( \infty,Y_{s_i} \right]_<$, then $A_{IQ}(t_j) \subset \left( \infty,Y_{t_j} \right]_<$ and $A_{IQ}(s_j) \subset \left( \infty,Y_{s_j} \right]_<$ for every $1 \leq j \leq i$. 

\end{itemize}

\label{lem5}
\end{lem}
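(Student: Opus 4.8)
The plan is to turn each of the four assertions into a short chain of inequalities in the left-order $<$ and close it up by transitivity; the full twist $h^{3}$ is used only to fix the isotopy classes, so the whole argument lives along the chain of tangle parts $t_{1},s_{1},\dots ,t_{n},s_{n}$.

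The first step is a reformulation. By Proposition \ref{prop1}, for every tangle part $u$ of $D$ and every left-order $<$, the generators in $A_{IQ}(u)$ sit on a single chain (call it the $u$-chain), monotone for one symbol $\diamond\in\{=,<,>\}$, with $X_{u}$ and $Y_{u}$ as its two extremities; hence $A_{IQ}(u)\subset\left(\infty,Y_{u}\right]_{<}$ is equivalent to $X_{u}\le Y_{u}$, while $A_{IQ}(u)\subset\left(\infty,X_{u}\right]_{<}$ is equivalent to $Y_{u}\le X_{u}$. Moreover any arc that $u$ shares with a neighbouring tangle part contributes a generator of the $u$-chain, hence is squeezed between $X_{u}$ and $Y_{u}$; and, reading off Figures \ref{fig12} and \ref{fig16}, such a shared arc occupies inside $u$ an extremity of the chain or the position immediately next to an extremity, so that a single comparison of its generator with the nearby endpoint already determines the symbol $\diamond$, hence decides whether $X_{u}\le Y_{u}$ or $Y_{u}\le X_{u}$ (this also settles the degenerate case in which the shared arc coincides with an endpoint).

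With this dictionary I would prove (S.D.T.1) by induction on $j\ge i$, the case $j=i$ being the hypothesis. Assume $X_{t_{j}}\le Y_{t_{j}}$ and $Y_{s_{j}}\le X_{s_{j}}$. Since $X_{t_{j+1}}$ lies in the $t_{j}$-chain and $Y_{t_{j}}$ lies in the $s_{j}$-chain, the inductive hypothesis and transitivity give $X_{t_{j+1}}\le Y_{t_{j}}\le X_{s_{j}}$; because $X_{s_{j}}$ also lies in the $t_{j+1}$-chain, next to the endpoint $X_{t_{j+1}}$, this forces the $t_{j+1}$-chain to be non-decreasing, i.e. $X_{t_{j+1}}\le Y_{t_{j+1}}$, that is $A_{IQ}(t_{j+1})\subset\left(\infty,Y_{t_{j+1}}\right]_{<}$. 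Feeding this back, $Y_{s_{j+1}}$ lies in the $s_{j}$-chain and $X_{s_{j}}$ lies in the now non-decreasing $t_{j+1}$-chain, so transitivity gives $Y_{s_{j+1}}\le X_{s_{j}}\le Y_{t_{j+1}}$; since $Y_{t_{j+1}}$ is the generator of the $s_{j+1}$-chain next to the endpoint $Y_{s_{j+1}}$, this forces $Y_{s_{j+1}}\le X_{s_{j+1}}$, that is $A_{IQ}(s_{j+1})\subset\left(\infty,X_{s_{j+1}}\right]_{<}$. This completes the induction and proves (S.D.T.1).

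Assertion (S.D.T.2) is obtained by running the identical argument with the inequality signs of the hypotheses reversed: one gets $X_{s_{j}}\le Y_{t_{j}}\le X_{t_{j+1}}$, which by adjacency in the $t_{j+1}$-chain forces $Y_{t_{j+1}}\le X_{t_{j+1}}$, and then $Y_{t_{j+1}}\le X_{s_{j}}\le Y_{s_{j+1}}$, which forces $X_{s_{j+1}}\le Y_{s_{j+1}}$. Assertions (S.D.T.3) and (S.D.T.4) are the corresponding backward statements and are proved by the same kind of induction run from index $j+1$ down to index $j$, using this time the arcs that $t_{j+1}$ and $s_{j+1}$ share with $t_{j}$ and $s_{j}$; transitivity again produces three-term chains that pin down the monotony of the $t_{j}$- and $s_{j}$-chains in the required direction. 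I expect the only genuinely delicate point, and the real substance behind the word \emph{easily}, to be the bookkeeping of the shared arcs: one must check from Figure \ref{fig12} that each arc common to two consecutive tangle parts really does occupy an end position, or the position adjacent to an end, inside both tangle parts, so that a single near-endpoint comparison genuinely fixes the whole monotony symbol of a chain. Once that is verified, every step above is a direct application of transitivity of $<$ together with Proposition \ref{prop1}.
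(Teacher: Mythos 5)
Your argument is correct and is essentially the fleshed-out version of what the paper intends: the paper gives no written proof beyond asserting that the lemma ``is easily deduced from the transitivity of the order $<$'', and your induction along the chain of tangle parts, translating each containment via Proposition \ref{prop1} into the monotony direction of the corresponding chain and then propagating it through the shared arcs by transitivity, is exactly that argument (it is the ``self duplicating template'' propagation the paper describes for the oriented edges of $\Gamma_{D,<}$). One cosmetic remark: you do not actually need a shared arc to occupy an end position or the position adjacent to an end of a chain --- any position distinct from the relevant endpoint suffices, since Proposition \ref{prop1} makes the whole chain monotone for a single symbol, so a single comparison with the endpoint already rules out the opposite strict monotony.
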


Let $D$ be the link diagram depicted in Figure \ref{fig12}. By \textit{self duplicating templates} we mean how the orientations of two fixed adjacent edges of $\Gamma_{D,<}$ corresponding to $t_i$ and $s_i$ impose the orientations of either the edges at the right or at the left of these fixed edges by duplication. This observation will be a key technique in the proof of Theorem \ref{theo_4}. 

\begin{proof}[Proof of Theorem \ref{theo_4}]
Let $L$ denote the link $L\left((t_1,s_1),\hdots,(t_n,s_n),h^{l})\right)$ which admits a diagram $D$ as depicted in Figure \ref{fig12}. We fix three arcs $x$, $y$, and $z$ of $D$ belonging to the arcs $A(h^l)$ of the full-twist $h^l$ and express the elements $A_{IQ}(h^l)$ in function of $x$, $y$, and $z$ as depicted in Figure \ref{fig16}. 

We will only prove the case $(i)$. The case $(ii)$ is proved by adapting the same argument we will use to prove the case $(i)$.

Assume $l=3$ and $| s_n | \geq 1$. In this case, the arcs $x$, $y$, and $z$ are depicted in the right of Figure \ref{fig16}. Suppose on the contrary that $IQ(L)$ is left-orderable and let $<$ be a left-order on the elements of $IQ(L)$. 

\begin{figure}[H]
\centering
\includegraphics[width=0.8\linewidth]{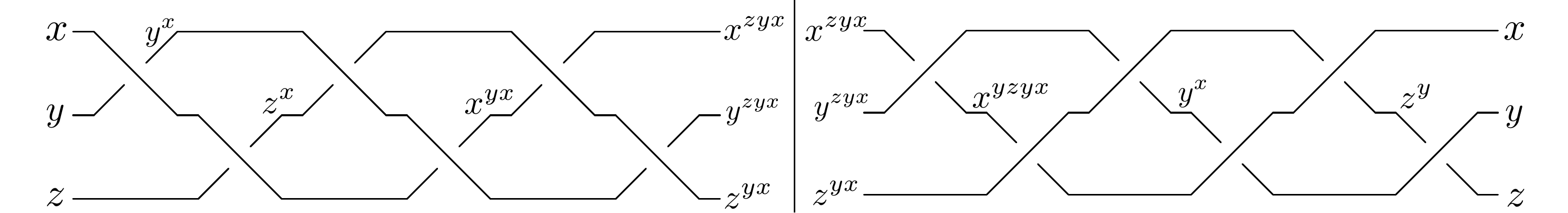}   
\caption{The elements of $A_{IQ}(h^l)$ written in function of the generators $x,y$ and $z$. The case where $l=1$ is depicted on the right and the case where $l=-1$ is depicted on the left.}
\label{fig16}
\end{figure} 

\item[\underline{Case 1}: $x=y$.] 
\item[\underline{Subcase 1}: $x=y=z$.] In this case, the elements of $A_{IQ}(D)$ depicted in Figure \ref{fig16} are all equal. By Proposition \ref{prop1}, we can show that all coarse relations in $\mathcal{IQ}(L,D)$ are trivial which is a contradiction by Theorem \ref{theo_1}. 
\item[\underline{Subcase 2}: $z<x$.] In this case, the orientation of the edges of the graph $\Gamma_{D,<}$ corresponding to the tangle parts $t_1$ and $s_1$ matches $S.D.T.1$. This imposes the orientation of the edges corresponding to all tangle parts $t_1,\hdots,t_n$ and $s_1,\hdots,s_n$. It is clear that the orientation of the chain of edges corresponding to $s_1,\hdots,s_n$ implies that $z<x^z$. But since $z<x$, then we get a contradiction by Lemma \ref{lem1}. 
\item[\underline{Subcase 3}: $x<z$.] In this case, the orientation of the edges of the graph $\Gamma_{D,<}$ corresponding to the tangle parts $t_1$ and $s_1$ matches $S.D.T.2$. This imposes the orientation of the edges corresponding to all tangle parts $t_1,\hdots,t_n$ and $s_1,\hdots,s_n$. It is clear that orientation of the chain of edges corresponding to $s_1,\hdots,s_n$ implies that $x^z<z$. But since $x<z$, then we get a contradiction by Lemma \ref{lem1}. 

We conclude that the case $x = y$ is impossible. Thus, $x \neq y$.
 
\item[\underline{Case 2:} $x<y$.] Note that by Proposition \ref{prop1}, this case is equivalent to $A_{IQ}(t_1) \subset \left[ x, \infty \right)_<$ , which determines the orientation of the edge of $\Gamma_{D,<}$ corresponding to $t_1$. In this case we have the following 
\begin{align}
x & < y \nonumber \\
\Rightarrow z^y*x & < z^y*y \nonumber  \\
\Leftrightarrow z^{yx} & < z. \label{eq13} 
\end{align}

Suppose that $A_{IQ}(s_1) \subset \left[ z, \infty \right)_<$. Then the orientation of the edges of $\Gamma_{D,<}$ corresponding to the tangle parts $t_1$ and $s_1$ are matching $S.D.T.1$. This imposes the orientation of the edges corresponding to all tangle parts $t_1,\hdots,t_n$ and $s_1,\hdots,s_n$. The orientation of the chain of edges corresponding to the tangle parts $s_1,\hdots,s_n$ implies that $z<z^{yx}$, which contradicts (\ref{eq13}). By Propositin \ref{prop1}, we conclude that  
\begin{equation}
A_{IQ}(s_1) \subset \left( \infty,z \right]_<.
\label{eq14}
\end{equation}

Let $\alpha$ be the single element of $A(t_1) \cap A(s_1)$. By the assumptions, Proposition \ref{prop1}, and (\ref{eq14}), we have the following 

\begin{align}
& x < y < \alpha \leq z \nonumber \\
\Rightarrow & y < z  \nonumber \\
\xLeftrightarrow{\mathit{Lemma \ref{lem1}}} & x^z*y < x^z*z \nonumber \\
\Leftrightarrow & x^{zy} < x \nonumber \\
\xLeftrightarrow{\mathit{Lemma \ref{lem1}}} & x < x^{zy}*x \nonumber \\
\Leftrightarrow & x < x^{zyx} \nonumber \\
\xLeftrightarrow{\mathit{Lemma \ref{lem1}}} & x^{yzyx}*z^{yx} < x^{yzyx}*y^{zyx}, \text{ because } x = x^{yzyx}*z^{yx} \text{ and } x^{zyx} = x^{yzyx}*y^{zyx} \nonumber \\
\Leftrightarrow & z^{yx} < y^{zyx} \nonumber \\
\xLeftrightarrow{\mathit{Proposition \ref{prop1}}} & A_{IQ}(s_n) \subset \left( \infty , y^{zyx} \right]_<, \text{ because } z^{yx} \in A_{IQ}(s_n) \text{ and } y^{zyx}=X_{s_n}. \label{eq15}
\end{align}

Note that (\ref{eq15}) determines the orientation of the edge of $\Gamma_{D,<}$ corresponding to $s_n$. Using (\ref{eq15}), we get the following

\begin{align}
& z^{yx} \leq y^{zyx} \nonumber \\
\xLeftrightarrow{\mathit{Lemma \ref{lem1}}} & y^{zyx}*z^{yx} \leq z^{yx} \nonumber \\
\Leftrightarrow & y^x \leq z^x*y^x, \text{ because } y^x = y^{zyx}*z^{yx} \text{ and } z^{yx}=z^x*y^x \nonumber \\
\xLeftrightarrow{\mathit{Lemma \ref{lem1}}} & z^x \leq y^x \nonumber \\
\xLeftrightarrow{\mathit{Lemma \ref{lem1}}} & z^{xzx}*z^x \leq z^{xzx}*y^x \nonumber \\
\Leftrightarrow & z \leq z^{yx}*x^{zyx}, \text{ because } z = z^{xzx}*z^x \text{ and } z^{yx}*x^{zyx}=z^{xzx}*y^x. \label{eq16} 
\end{align}

Let $\beta$ be the single element of $A(t_n) \cap A(s_n)$. Suppose that $A_{IQ}(t_n) \subset \left[ \beta , \infty \right)_<$, which determines the orientation of the edge corresponding to $t_n$. The resulting orientations of the edges corresponding to $t_n$ and $s_n$ matches $S.D.T.3$ which imposes the orientation of all the edges corresponding to $t_1,\hdots,t_n$ and $s_1,\hdots,s_n$. In particular, the resulting orientation of the edge corresponding to $t_1$  implies that $A_{IQ}(t_1) \subset \left( \infty , x \right]_<$. This contradicts the assumption $x < y$, because $y \in A_{IQ}(t_1)$, and so our supposition is false. Hence, we get that 

\begin{equation}
A_{IQ}(t_n) \subset \left( \infty , \beta \right]_<. \label{eq17}
\end{equation}

Suppose that $\beta \leq z^{yx}$. Since $x^{zyx} \in A_{IQ}(t_n)$, then (\ref{eq17}) implies the following 

\begin{align}
& x^{zyx} \leq \beta \leq z^{yx} \nonumber \\
\Rightarrow & x^{zyx} \leq z^{yx} \nonumber \\
\xLeftrightarrow{\mathit{Lemma \ref{lem1}}} & z^{yx}*x^{zyx} \leq x^{zyx} \leq z^{yx}. \label{eq18}
\end{align}

The results (\ref{eq16}) and (\ref{eq18}) combined imply that $z \leq z^{yx}$, which contradicts (\ref{eq13}). Thus, our supposition is false and we get that
\begin{equation}
z^{yx} < \beta \label{eq19}
\end{equation}

According to the assumptions, we have $| s_n | \geq 1$. This means that the fraction $\left[ a_1, \hdots, a_{2k+1} \right]$ of $s_n$ satisfies $a_1 \leq -1$. Diagrammatically, this means that if we enumerate the elementary subtangles of $s_n$ as $t_{n_1}, \hdots ,t_{n_{2k+1}}$ as in the right of Figure \ref{fig9} and the arcs of each $t_{n_i}$ as in Figure \ref{fig10}, then we have that $z^{yx} = x_{(n_1,1)}$. Then by Proposition \ref{prop1} and (\ref{eq15}), we get that $A_{IQ}(s_n) \setminus \left\lbrace y^{zyx} \right\rbrace \subset \left( \infty,z^{xy} \right]_<$, thus $\beta \leq z^{xy}$ which contradicts (\ref{eq19}). Hence, \underline{Case 2} is impossible.

\item[\underline{Case 3:} $y<x$.] We use an analogous argument as we did in \underline{Case 2} by inverting each one of the obtained inequalities and we show that this case is also impossible. We conclude thereby that if $l=3$ and $|s_n| \geq 1$, then $IQ(L)$ is not left-orderable.
\end{proof}

The set of quasi-alternating links appeared in the context of link homology as a natural generalization of alternating links. They were defined by Ozsv{\'a}th and Szab{\'o} in \cite{ozsvath} where it is shown that non-split alternating links are quasi-alternating. 
The set of quasi-alternating links is defined recursively as follows
\begin{mydef}
The set $\mathcal{Q}$ of \textbf{quasi-alternating links} is the smallest set of links satisfying the following properties:
\begin{enumerate}
\item The unknot belongs to $\mathcal{Q}$,
\item if $L$ is a link with a diagram $D$ containing a crossing $c$ such that 
\begin{enumerate}
\item for both smoothings of the diagram $D$ at the crossing $c$ denoted by $D^c_0$ and $D^c_\infty$ as in Figure \ref{fig13}, the links $L^{c}_{0}$ and $L^{c}_{\infty}$ represented respectively by the diagrams $D^{c}_{0}$ and $D^{c}_{\infty}$ are in $\mathcal{Q}$ and,
\item $\det(L)=\det(L^{c}_{0})+\det(L^{c}_{\infty}).$
\end{enumerate}
Then $L$ is in $\mathcal{Q}$. In this case we will say that $c$ is a
\textit{quasi-alternating crossing} of $D$ and that $D$ is quasi-alternating at $c$.
\end{enumerate}
\end{mydef}

\begin{figure}[H]
\centering
\includegraphics[width=0.4\linewidth]{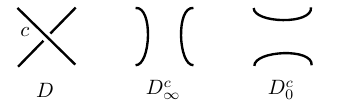}
\caption{The link diagram $D$ and its resolutions $D^{c}_{0}$ and $D^{c}_{\infty}$ at the crossing $c$.}
\label{fig13}
\end{figure}

A connected alternating link diagram is quasi-alternating at each of its \textit{non-nugatory crossings}, i.e., crossings that can not be eliminated by some finite sequence of Reidemeister moves. This is shown in \cite{ozsvath} and ensures that any non-split alternating link is quasi-alternating. 

 In \cite{baldwin2008heegaard}, Baldwin characterized quasi-alternating $3$-braid closures. According to this characterization, the only non-alternating and quasi-alternating $3$-braid closures are of the form $\sigma_1^{p_1}\sigma_2^{-q_1} \hdots \sigma_1^{p_n}\sigma_2^{-q_n} h^l$ where $p_i$ and $q_i$ are positive integers, and $l=\pm 3$. Thus, quasi-alternating and non-alternating $3$-braid closures are particular members of the family of links depicted in Figure \ref{fig12}. More precisely, these $3$-braid closures have the form $L \left( \left( -\frac{1}{p_1},-q_1 \right),\hdots,\left( -\frac{1}{p_n},-q_n \right),h^{\pm 3} \right)$. Consequently, we get the following result as a corollary of Theorem \ref{theo_2} and Theorem \ref{theo_4}.

\begin{cor}
The involutory quandle of any quasi-alternating $3$-braid closure is not left-orderable.
\label{cor_1}
\end{cor}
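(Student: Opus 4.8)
The plan is to separate quasi-alternating $3$-braid closures into the alternating ones, where Theorem~\ref{theo_2} applies, and the non-alternating ones, where Theorem~\ref{theo_4} applies after invoking Baldwin's classification.

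First I would note that every link in $\mathcal{Q}$ has strictly positive determinant, by an immediate induction on the recursive definition of $\mathcal{Q}$ (the unknot has determinant $1$, and $\det(L)=\det(L^{c}_{0})+\det(L^{c}_{\infty})$ with both summands positive); in particular every quasi-alternating link is non-split, and the only one that is trivial is the unknot. Consequently a quasi-alternating $3$-braid closure $L$ is either the unknot, a non-trivial alternating link, or a non-alternating link. If $L$ is the unknot, then $IQ(L)$ is the trivial quandle on one generator, which is not left-orderable. If $L$ is a non-trivial alternating link, then $IQ(L)$ is not left-orderable by Theorem~\ref{theo_2}.

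It remains to treat the non-alternating case. By Baldwin's characterization of quasi-alternating $3$-braid closures~\cite{baldwin2008heegaard}, such a closure is of the form $\sigma_1^{p_1}\sigma_2^{-q_1}\cdots\sigma_1^{p_n}\sigma_2^{-q_n}h^{l}$ with all $p_i,q_i\geq 1$ and $l=\pm 3$, which, as recalled above, is exactly the link $L\bigl(\bigl(-\tfrac{1}{p_1},-q_1\bigr),\dots,\bigl(-\tfrac{1}{p_n},-q_n\bigr),h^{\pm 3}\bigr)$ of Figure~\ref{fig12}; each rational tangle $-\tfrac{1}{p_i}$ and $-q_i$ has negative fraction and is therefore of type~$2$, as required there. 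If $l=3$, then $s_n=-q_n$ with $q_n\geq 1$, so $|s_n|\geq 1$ and Theorem~\ref{theo_4}(i) gives that $IQ(L)$ is not left-orderable. If $l=-3$, then $t_1=-\tfrac{1}{p_1}$ with $p_1\geq 1$, so $|t_1|=\tfrac{1}{p_1}\leq 1$ and Theorem~\ref{theo_4}(ii) applies. Thus in every case $IQ(L)$ is not left-orderable, which proves Corollary~\ref{cor_1}.

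The only point requiring care, and the step I would double-check, is the dictionary between Baldwin's braid-word normal form and the tangle notation of Figure~\ref{fig12}: one must verify that the twist regions $\sigma_1^{p_i}$ and $\sigma_2^{-q_i}$ correspond to the type~$2$ rational tangles $-\tfrac{1}{p_i}$ and $-q_i$, and that Conway's fraction normalization then makes the hypotheses $|s_n|\geq 1$ (when $l=3$) and $|t_1|\leq 1$ (when $l=-3$) automatic from $p_i,q_i\geq 1$. Granting this, the corollary is a direct combination of Theorems~\ref{theo_2} and~\ref{theo_4}.
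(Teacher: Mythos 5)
Your argument is correct and is essentially the paper's: the corollary is obtained there exactly by combining Theorem~\ref{theo_2} for the alternating closures with Baldwin's characterization, under which the non-alternating quasi-alternating $3$-braid closures are the links $L\bigl(\bigl(-\tfrac{1}{p_1},-q_1\bigr),\dots,\bigl(-\tfrac{1}{p_n},-q_n\bigr),h^{\pm 3}\bigr)$, so that $|s_n|=q_n\geq 1$ (resp.\ $|t_1|=\tfrac{1}{p_1}\leq 1$) makes Theorem~\ref{theo_4}(i) (resp.\ (ii)) apply. Your extra checks (positive determinant forcing non-splitness, and the unknot case, which the paper leaves implicit) are consistent with the paper's conventions and only add detail.
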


\section{Discussion and perspectives}
As mentioned in the introduction, the definition of the coarse presentation of the involutory quandle of a non-split link $L$ was inspired by the coarse Brunner's presentation of the fundamental group $\pi_1 \left( \Sigma_2(L) \right)$ introduced by Ito who then used it to give a non-left-orderability criterion of this fundamental group (\cite{ito2013non}, Theorem 3.11). This criterion was subsequently utilized to generate infinite families of links whose double branched covers have fundamental groups that are not left-orderable. Among these link families, several are obtained by modifications of quasi-alternating links which consist of the replacement of specific crossings with algebraic tangles (\cite{ito2013non}, Remark 2). Ito noted that the effectiveness of this technique stems from the fact that if a coarse Brunner's presentation argument allows to show that the criterion holds for a particular link $L$, then the same argument would allow to show that the criterion holds also for the links provided by the modifications of $L$. Besides, it is known that the quasi-alternating property is preserved by such modifications in numerous instances (\cite{champanerkar2009twisting},\cite{abchir2020generating}). These observations allowed Ito to highlight the usefulness of the criterion he introduced in detecting new families of quasi-alternating links whose double-branched covers possess non-left-orderable fundamental groups. Thus, Ito gives further support for the L-space conjecture (\cite{boyer2013spaces}). 

In Theorem \ref{theo_4}, we used a similar approach to Ito's by considering links that are obtained by modifying the non-alternating and quasi-alternating $3$-braid closures. In the early stages of our investigations, we checked that the criterion stated in Theorem \ref{theo_1} holds for these $3$-braid closures (Corollary \ref{cor_1}). Along the way, we noted, as Ito did in his context, that if a coarse presentation argument allows to show that the criterion holds for a given link, it would allow to show that the criterion holds also for the links obtained by replacing some crossings with specific rational tangles. Similarly to the discussion made in the previous paragraph, one can use the criterion stated in Theorem \ref{theo_1} to detect new families of quasi-alternating links whose involutory quandles are not left-orderable by modifying given ones for which the criterion holds. In particular, by Theorem \ref{theo_2}, the criterion holds for any alternating link which then can be modified to produce infinitely many non-alternating and quasi-alternating links (\cite{abchir2020generating}, Theorem 4.8).\\
  Furthermore, the class of augmented alternating links contains infinitely many quasi-alternating links (\cite{abchir2023infinite}, proof of Theorem 3.2). Among these links, there exist infinitely many Montesinos links (\cite{abchir2023infinite}, proof of Proposition 3.4). By Theorem \ref{theo_2}, the involutory quandles of these quasi-alternating links are not left-orderable.\\

Finally, this discussion leads us to state the following conjecture.

\begin{conj}
If $L$ is a quasi-alternating link, then its involutory quandle $IQ(L)$ is not left-orderable.
\label{conj_1}
\end{conj}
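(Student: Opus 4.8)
The plan is to argue by structural induction on the recursive definition of $\mathcal{Q}$, using the criterion of Theorem \ref{theo_1} as the engine. The base case is immediate: the unknot has a one-element involutory quandle, and being finite it is not left-orderable. For the inductive step one is given a link $L$ with a diagram $D$ and a quasi-alternating crossing $c$ whose smoothings $L^c_0$ and $L^c_\infty$ lie in $\mathcal{Q}$, so that by induction $IQ(L^c_0)$ and $IQ(L^c_\infty)$ are not left-orderable; moreover the defining condition $\det(L)=\det(L^c_0)+\det(L^c_\infty)$ with both summands positive forces $\det(L)\neq 0$, so $L$ is non-split and Theorem \ref{theo_1} applies. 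The goal is then to deduce that $IQ(L)$ is not left-orderable.

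First I would try to mimic the pattern already successful in Theorems \ref{theo_2}, \ref{theo_3} and \ref{theo_4}: assume $IQ(L)$ is left-orderable with a left-order $<$, form the oriented coarse decomposition graph $\Gamma_{D,<}$, and show that the quasi-alternating crossing $c$ forces a propagation of edge orientations — a self-duplicating template in the spirit of Lemma \ref{lem5} — which eventually makes every coarse relation of $\mathcal{IQ}(L,D)$ trivial, contradicting Theorem \ref{theo_1}. The determinant additivity should be the lever here: since $\det(L)=|H_1(\Sigma_2(L))|$ when nonzero, this identity is the combinatorial shadow of a relation between the tangle-strand decompositions of $D$, $D^c_0$ and $D^c_\infty$, and it is exactly this numerical rigidity that one hopes to translate into rigidity of the induced orders on the sets $A_{IQ}(t)$ for the tangle parts $t$ adjacent to $c$. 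The modification arguments of \cite{champanerkar2009twisting} and \cite{abchir2020generating}, as discussed in the final section, already realize a version of this propagation for links obtained from known examples by replacing crossings with rational tangles, so the induction would be genuinely new only outside the reach of those constructions.

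The hard part will be precisely this inductive step. Unlike the alternating and $3$-braid families, a general quasi-alternating link has no canonical diagram, and the tangle-strand decomposition does not transform in a controlled way when a crossing is smoothed: the coarse relations of $\mathcal{IQ}(L,D)$ are genuinely new data not directly visible in the resolved diagrams, and left-orderability of $IQ$ is not known to behave monotonically under crossing resolution. Thus there is no formal implication ``$IQ(L^c_0)$ and $IQ(L^c_\infty)$ not left-orderable $\Rightarrow IQ(L)$ not left-orderable'' to invoke. Producing a propagation mechanism valid for an \emph{arbitrary} quasi-alternating diagram — rather than for the regional coherence of alternating diagrams exploited in Theorem \ref{theo_2} or the template structure of $3$-braids used in Theorem \ref{theo_4} — is the crux, and it seems to require a structural description of quasi-alternating diagrams in terms of their rational tangle parts that is not presently available.

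As a complementary route I would pursue the enveloping-group side. For a quasi-alternating $L$ the double branched cover $\Sigma_2(L)$ is an L-space, and Ito's criterion (\cite{ito2013non}) already yields non-left-orderability of $\pi_1(\Sigma_2(L))$ for large families of such links. By Winker's theorem (\cite{winker1984quandles}), $\pi_1(\Sigma_2(L))$ sits inside a quotient of $\mathrm{Env}(IQ(L))$, so one could attempt to locate within $\mathrm{Env}(IQ(L))$ two distinct, non-inverse, commuting elements conjugate to generators from $\eta(S)^{\pm 1}$; by the non-left-orderability criterion of \cite[Proposition 5.3]{raundal} this would force $IQ(L)$ to be non-left-orderable. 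The difficulty then transfers to verifying the injectivity of $\eta$ and extracting such commuting elements from the torsion or peripheral structure carried by $H_1(\Sigma_2(L))$. In my view the most plausible path to the full conjecture combines this homological input with the combinatorial propagation above, using the determinant-additivity rigidity to bridge the two.
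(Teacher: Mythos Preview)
The statement you are attempting to prove is \emph{Conjecture}~\ref{conj_1}: the paper does not prove it, it only states it and provides supporting evidence (Theorems~\ref{theo_2}, \ref{theo_3}, \ref{theo_4} and Corollary~\ref{cor_1}). There is therefore no ``paper's own proof'' to compare against, and your proposal is, as you yourself recognize, an outline of strategies rather than a proof.

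That said, the paper gives a concrete reason why your primary strategy --- structural induction on the recursive definition of $\mathcal{Q}$, using Theorem~\ref{theo_1} as the engine --- cannot succeed as stated. Example~\ref{empl2} exhibits a family of diagrams containing the quasi-alternating knot $9_{39}$ for which the coarse-presentation argument \emph{fails}: one can only conclude $A_{IQ}(D)\subset\{|X_6,Y_6|\}$, and the equality $X_6=Y_6$ needed to trigger Theorem~\ref{theo_1} is not obtainable from the coarse relations alone. So the propagation mechanism you hope to extract from determinant additivity is demonstrably not strong enough, even for a single concrete quasi-alternating knot, to force all coarse relations to be trivial. This is a genuine obstruction, not just a missing lemma: the criterion of Theorem~\ref{theo_1} is a sufficient condition, not a necessary one, and Example~\ref{empl2} shows it is strictly weaker than non-left-orderability of $IQ(L)$.

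Your second route via $\mathrm{Env}(IQ(L))$ and \cite[Proposition~5.3]{raundal} has its own structural gap: that proposition requires the natural map $\eta:Q\to\mathrm{Conj}(\mathrm{Env}(Q))$ to be injective, which is not known for $IQ(L)$ in general (and is false for many involutory quandles). Winker's embedding of $\pi_1(\Sigma_2(L))$ goes into a \emph{quotient} of $\mathrm{Env}(IQ(L))$, so non-left-orderability of $\pi_1(\Sigma_2(L))$ --- even if one had it for all quasi-alternating $L$, which would already settle the L-space conjecture for these links --- does not lift back to a statement about $IQ(L)$ without additional control you have not supplied. In short, both branches of your plan run into obstacles that are identified, explicitly or implicitly, in the paper itself; the conjecture remains open.
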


It is interesting to point out that there exist some quasi-alternating links for which the criterion in Theorem \ref{theo_1} does not allow to verify Conjecture \ref{conj_1}. For instance, the family of links depicted in Figure \ref{fig17} contains the quasi-alternating knot $9_{39}$ listed in the \textit{KnotInfo} tables (\cite{knotinfo}) and, as we shall see in Example \ref{empl2}, our argument fails to show that involutory quandles of the links in this family are not left-orderable. Before dealing with this example, we need the following general result concerning left-orderable involutory quandles.

\begin{lem}
  Let $Q$ be any left-orderable involutory quandle and let $a,b,c,d,e, \text{ and } f$ be any elements of $Q$.
  \begin{enumerate}
  \item If $a \in \left\lbrace | c,d | \right\rbrace$, then $\left\lbrace | a,b | \right\rbrace \subset \left\lbrace | c,b | \right\rbrace \cup \left\lbrace | d,b | \right\rbrace$.
  \item If $b \in \left\lbrace | e,f | \right\rbrace$, then $\left\lbrace | a,b | \right\rbrace \subset \left\lbrace | a,e | \right\rbrace \cup \left\lbrace | a,f | \right\rbrace$.
  \end{enumerate}
\label{lem6}
\end{lem}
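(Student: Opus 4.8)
The plan is to prove part (1); part (2) then follows by symmetry, since the definitions of $\left\lbrace | x,y | \right\rbrace$ are symmetric in $x$ and $y$ (indeed $\left\lbrace | x,y | \right\rbrace = \left\lbrace | y,x | \right\rbrace$), and one can simply exchange the roles of the two coordinates. So I focus on showing that if $a \in \left\lbrace | c,d | \right\rbrace$ then $\left\lbrace | a,b | \right\rbrace \subset \left\lbrace | c,b | \right\rbrace \cup \left\lbrace | d,b | \right\rbrace$. The natural strategy is contrapositive on membership: fix an element $z \in Q$ with $z \notin \left\lbrace | c,b | \right\rbrace \cup \left\lbrace | d,b | \right\rbrace$, and produce a left-order witnessing $z \notin \left\lbrace | a,b | \right\rbrace$, i.e. a left-order $<$ with $z \notin [a,b]_<$ (where $< \in LO_{(a,b)}(Q)$) or $z \notin [b,a]_<$ (where $< \in LO_{(b,a)}(Q)$).

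First I would unwind the hypotheses into statements about left-orders. Saying $z \notin \left\lbrace | c,b | \right\rbrace = [c,b] \cap [b,c]$ means $z \notin [c,b]$ or $z \notin [b,c]$; unwinding the definition $[x,y] = \bigcap_{< \in LO_{(x,y)}(Q)} [x,y]_<$, this produces an explicit left-order $<_1$ with either $c <_1 b$ and $z \notin [c,b]_{<_1}$, or $b <_1 c$ and $z \notin [b,c]_{<_1}$; concretely in all cases $<_1$ places $z$ strictly outside the closed interval between $c$ and $b$, i.e. $z$ is strictly below both $c$ and $b$, or strictly above both. Similarly $z \notin \left\lbrace | d,b | \right\rbrace$ yields a left-order $<_2$ with $z$ strictly outside the interval between $d$ and $b$. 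Meanwhile $a \in \left\lbrace | c,d | \right\rbrace$ means that in \emph{every} left-order $a$ lies (weakly) between $c$ and $d$.

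The heart of the argument is a case analysis that chooses, for a given $z$ as above, one of the two orders $<_1, <_2$ that certifies $z \notin \left\lbrace | a,b | \right\rbrace$. The guiding intuition: if $z$ is strictly below $b$ in $<_1$ and also strictly below $c$ there, then since $a$ is sandwiched between $c$ and $d$ one needs to know where $d$ sits relative to $z$ in that same order; if $z <_1 d$ as well then $z$ is below both $c$ and $d$, hence below $a$ (which lies between them), so $z \notin [b,a]_{<_1} = $ interval between $a$ and $b$ in $<_1$, giving $z \notin \left\lbrace | a,b | \right\rbrace$. The problematic configuration is when $z$ sits below $c$ but above $d$ (or vice versa) in $<_1$ — then $z$ could legitimately lie between $a$ and $b$. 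This is exactly where one switches to $<_2$: in $<_2$, $z$ lies strictly outside the $[d,b]$ interval, and one checks that the remaining sub-case is incompatible with $a$ being between $c$ and $d$ in \emph{that} order. The main obstacle I anticipate is bookkeeping: there are several sign/side configurations ($z$ above vs. below $b$; $c$ vs. $d$ on the same or opposite side of $z$) and one must verify that in each, at least one of $<_1,<_2$ does the job, using only the weak betweenness $c \le a \le d$ or $d \le a \le c$ that holds in the chosen order. I would organize this as a short lemma-internal claim: "in any left-order, if $z$ lies strictly below (resp. above) both of two elements $u,v$, then it lies strictly below (resp. above) every element of $\left\lbrace | u,v | \right\rbrace$," which is immediate from transitivity and the definition of $\left\lbrace | u,v | \right\rbrace$ as an intersection of intervals, and then feed it the appropriate $(u,v) \in \{(c,d)\}$ in whichever order survives. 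Once the correct order is identified in each branch, each verification is a one-line transitivity argument, so the write-up should be compact despite the case split.
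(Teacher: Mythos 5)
Your reduction of part (2) to part (1) via the symmetry $\left\lbrace | x,y | \right\rbrace=\left\lbrace | y,x | \right\rbrace$ is fine, and your unwinding of the definitions is correct: $z \notin \left\lbrace | c,b | \right\rbrace$ does produce a left-order $<_1$ in which $z$ is strictly below both $c$ and $b$ or strictly above both, and likewise $z \notin \left\lbrace | d,b | \right\rbrace$ produces such a $<_2$ for the pair $d,b$. The gap is the pivotal claim that in the problematic configuration ``the remaining sub-case is incompatible with $a$ being between $c$ and $d$ in that order.'' It is not. Consider the symmetric bad configuration: in $<_1$ one has $d \leq_1 a \leq_1 z <_1 c$ and $z <_1 b$, while in $<_2$ one has $c \leq_2 a \leq_2 z <_2 d$ and $z <_2 b$. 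All hypotheses are met in both orders ($a$ lies weakly between $c$ and $d$; $z$ lies strictly outside the interval bounded by $c,b$ in $<_1$ and by $d,b$ in $<_2$), yet $z$ lies inside the interval bounded by $a$ and $b$ in both orders, so neither witness certifies $z \notin \left\lbrace | a,b | \right\rbrace$, and nothing in your argument forces any other order to do so. The two witnessing orders are completely unrelated, so no cross-referencing between them yields a contradiction; the case analysis cannot be completed as described.

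The underlying obstruction is a quantifier exchange. What transitivity gives cheaply is the per-order statement: for each fixed left-order $<$, if $a$ lies between $c$ and $d$ and $z$ lies between $a$ and $b$, then $z$ lies between $c$ and $b$ or between $d$ and $b$ \emph{in that same order}. But which disjunct holds may depend on $<$, and that is strictly weaker than the asserted inclusion $\left\lbrace | a,b | \right\rbrace \subset \left\lbrace | c,b | \right\rbrace \cup \left\lbrace | d,b | \right\rbrace$, whose right-hand side is a union of two sets each defined by a condition ranging over \emph{all} left-orders. The paper gives no details (its proof is the single sentence that the result follows from a ``straightforward manipulation'' of the sets $\left\lbrace | x,y | \right\rbrace$), and the only manipulation that is genuinely straightforward is the per-order one above; upgrading it to the stated set-level inclusion would require an extra idea that excludes the mixed configuration described here, presumably using the quandle structure rather than interval bookkeeping alone, and your proposal does not supply such an idea.
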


The proof follows from a straightforward manipulation of sets of the form $\left\lbrace | x,y | \right\rbrace$. 

\begin{figure}[H]
\centering
\includegraphics[width=0.5\linewidth]{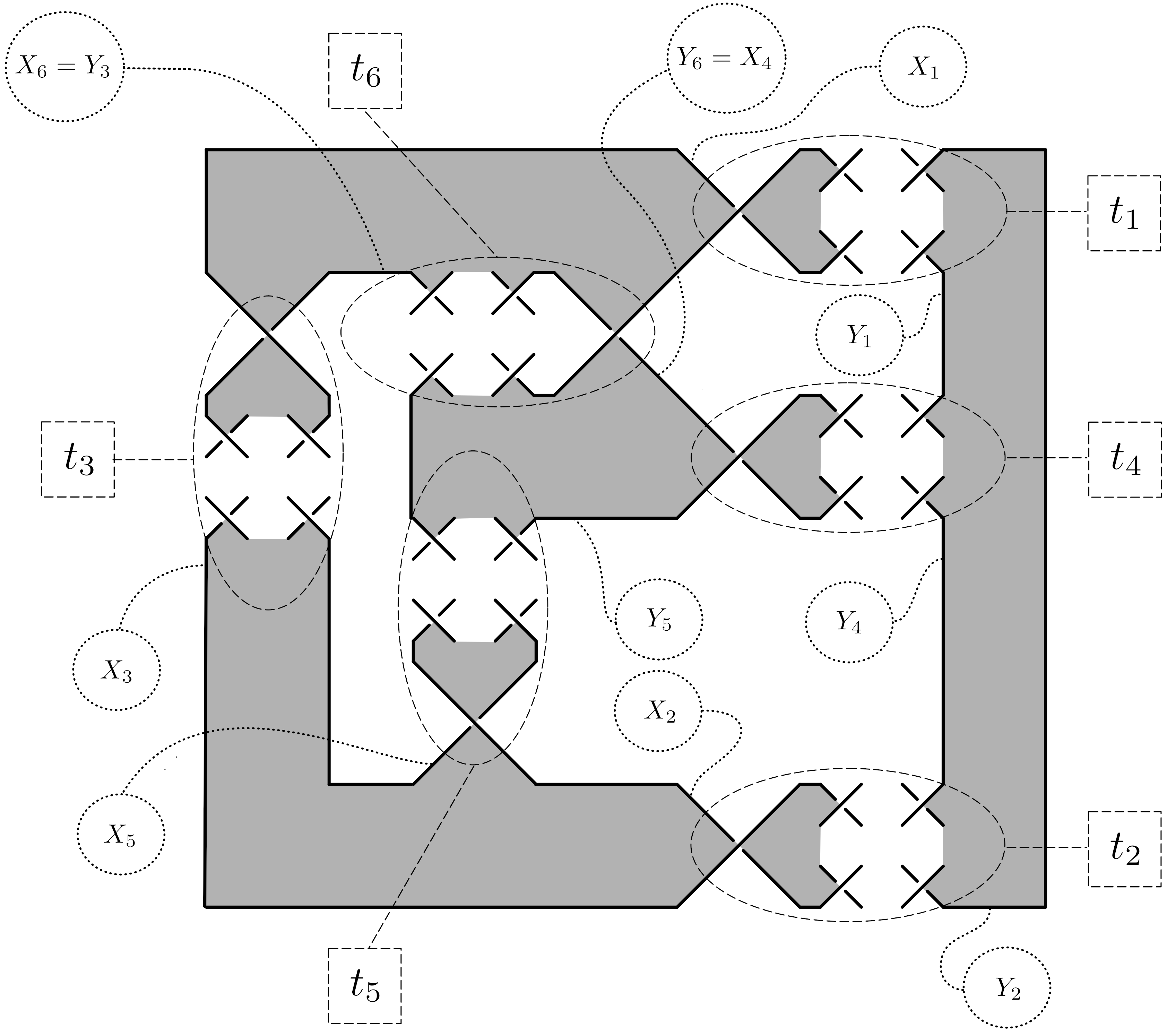}   
\caption{A tangle-strand decomposition of the diagram $D$ of Example \ref{empl2} with the labelling of some arcs.}
\label{fig17}
\end{figure} 

\begin{empl}
Let $D$ be the link diagram whose tangle-strand decomposition is depicted in Figure \ref{fig17}. Let $L$ be the corresponding link. To lighten the notations, for each tangle part $t_i$, we denote the arcs $X_{t_i}$ and $Y_{t_i}$ by $X_i$ and $Y_i$ respectively. It is clear from Figure \ref{fig17} that 
\begin{align}
 X_1 & \in A(t_3), X_2 \in A(t_5), X_3 \in A(t_2), X_5 \in A(t_3), Y_1 \in A(t_4), Y_2 \in A(t_1),  Y_4 \in A(t_2), Y_5 \in A(t_4), \notag \\ 
  X_4 & = Y_6 \text{ in } A(D), \text{ and } X_6 = Y_3 \text{ in } A(D).
\label{eq20}
\end{align}
Suppose that $IQ(L)$ is left-orderable. By (\ref{eq20}) and the coarse relations in $\mathcal{IQ}(L,D)$, we have the following 
\begin{equation}
\begin{array}{c|c}
  X_1 \in \left\lbrace | X_3,Y_3 | \right\rbrace. & Y_1 \in \left\lbrace | X_4,Y_4 | \right\rbrace. \\
  X_2 \in \left\lbrace | X_5,Y_5 | \right\rbrace. & Y_2 \in \left\lbrace | X_1,Y_1 | \right\rbrace. \\
  X_3 \in \left\lbrace | X_2,Y_2 | \right\rbrace. & Y_3 = X_6 \in \left\lbrace | X_6,Y_6 | \right\rbrace. \\
  X_4 = Y_6 \in \left\lbrace | X_6,Y_6 | \right\rbrace. & Y_4 \in \left\lbrace | X_2,Y_2 | \right\rbrace. \\
  X_5 \in \left\lbrace | X_3,Y_3 | \right\rbrace. & Y_5 \in \left\lbrace | X_4,Y_4 | \right\rbrace. \\
\end{array}
\label{eq21}
\end{equation}

By Lemma \ref{lem6} and (\ref{eq21}), we conclude that $A_{IQ}(D) = \displaystyle \bigcup_{1 \leq i \leq 6}\left\lbrace | X_i,Y_i | \right\rbrace \subset \left\lbrace | X_6,Y_6 | \right\rbrace$. Thus, if $<$ is any left-order on $IQ(L)$, then the candidates for the minimum and maximum of $A_{IQ}(D)$ with respect to $<$ are the generators $X_6$ and $Y_6$. In particular, If we could show that $X_6 = Y_6$ as generators of $IQ(L)$, the criterion in Theorem \ref{theo_1} would be met, hence $IQ(L)$ would be not left-orderable. Unfortunately, this equality has not been achieved. However, by using Proposition \ref{prop1}, one can show that if $IQ(L)$ is left-orderable and $<$ is any left-order on $IQ(L)$, then there exists a unique symbol $\diamond$ in the set $\left\lbrace <,> \right\rbrace$ such that the following inequalities hold
$$X_6 \diamond X_1 \diamond X_5 \diamond X_2 \diamond X_3 \diamond Y_4 \diamond Y_2 \diamond Y_1 \diamond Y_5 \diamond Y_6.$$
$$ X_2 \diamond \alpha \diamond \beta \diamond Y_2, \text{ where } \alpha = A(t_5) \cap A(t_6) \text{ and } \beta = A(t_1) \cap A(t_6).$$

\label{empl2}
\end{empl}
\nocite{*}
\bibliographystyle{plain}
\bibliography{on_left-orderability_of_involutory_quandles_of_links_abchir_sabak_06-11-2023}
\end{document}